\def\nSet{\{ 0,\dots, n\}}
\newcommand{\etype}[1]{\renewcommand{\labelenumi}{(#1{enumi})}}
\def\ealph{\etype{\alph} \dispace }
\def\dispace{\setlength{\itemsep}{2pt}}
\newcommand{\ds}[1]{\ {#1} \ }
\newcommand{\dss}[1]{\quad {#1} \quad }
\def\Iff{\ \Leftrightarrow \ }
\def\opr{\overline{\partial}}
\newcommand{\To}{\longrightarrow }
\def\Onto{\; -\hskip-5pt\twoheadrightarrow}
\def\mfB{\mathfrak{B}}
\def\mfD{\mathfrak{D}}
\def\mfL{\mathfrak{L}}
\def\mfM{\mathfrak{M}}
\def\tlR{\widetilde{R}}
\def\tlV{\widetilde{V}}
\def\tlW{\widetilde{W}}
\def\00{\{0\}}
\def\olR{\overline{R}}
\def\sm{\setminus}
\def\nucong{\cong_\nu}
\def\noi{\noindent}
\def\pSkip{\vskip 1.5mm \noindent}
\def\semiring0{semiring$^{\dagger}$}
\newcommand{\Ray}{\operatorname{Ray}}
\newcommand{\ray}{\operatorname{ray}}
\newcommand{\CS}{\operatorname{CS}}
\newcommand{\an}{{\operatorname{an}}}
\newcommand{\Loc}{\rm Loc}
\def\tT{\mathcal T}
\def\tG{\mathcal G}
\newtheorem{thm}{Theorem} [section]
\newtheorem*{thm*}{Theorem}
\newtheorem{cor}[thm]{Corollary}
\newtheorem{lem}[thm]{Lemma}
\newtheorem{prop}[thm]{Proposition}
\newtheorem*{claim*} {Claim}
\newtheorem*{theorem13.5'} {Theorem 13.5$'$}
\newtheorem{acknowledgment*}[thm] {Acknowledgment}
\newtheorem{example}[thm]{Example}
\newtheorem{examp}[thm]{Example}
    \newtheorem*{remarks*} {Remarks}
 \newtheorem{comm}[thm]{Comment}
 \newtheorem*{remark*}{Remark}
 \newtheorem{defn}[thm]{Definition}
\newtheorem{construction}[thm]{Construction}
\newtheorem{convention}[thm]{Convention}
\newtheorem{schol}[thm]{Scholium}
\newtheorem{problem}[thm]{Problem}
\newtheorem*{notation*} {Notation}
\newtheorem{rem}[thm]{Remark}
\def\N{\mathbb{N}}
\def\Z{\mathbb{Z}}
\def\RR{\mathbb{R}}
 \renewcommand{\sectionmark}[1]{}
\newcommand{\bfem}[1]{\textbf{#1}}
\newcommand{\veps}{\varepsilon}
\newcommand{\al}{\alpha}
\newcommand{\bt}{\beta}
\newcommand{\gm}{\gamma}
 \newcommand{\dl}{\delta}
\newcommand{\lm}{\lambda}
\newcommand{\zt}{\zeta}
\newcommand{\sig}{\sigma}
\newcommand{\vrp}{\varphi}
\def\simr{\sim_{\operatorname{r}}}
\begin{document}

\title[Stratifications of the ray space]{Stratifications of the ray space \\[1mm] of a tropical quadratic form\\[1mm] by Cauchy-Schwartz functions}

 \author[Z. Izhakian]{Zur Izhakian}
\author[M. Knebusch]{Manfred Knebusch}


\subjclass[2010]{Primary 15A03, 15A09, 15A15, 16Y60; Secondary
14T05, 15A33, 20M18, 51M20}

\keywords{Supertropical algebra, supertropical modules, bilinear forms,
quadratic forms,  quadratic pairs, ray spaces, convex sets, quasilinear sets, Cauchy-Schwarz ratio,  Cauchy-Schwarz functions, QL-stars, stratifications.}




\begin{abstract}

Classes of an equivalence relation on a module $V$ over a supertropical semiring, called rays, carry the underlaying structure of ``supertropical trigonometry'' and thereby a version of  convex geometry which is compatible with quasilinearity.  In this theory the traditional Cauchy-Schwarz inequality is replaced by the  CS-ratio which gives rise to special characteristic functions, called  CS-functions. These functions partite the ray space $\Ray(V)$ into convex sets and establish a main  tool for analyzing  varieties  of quasilinear stars in  $\Ray(V)$. They provide stratifications of $\Ray(V)$ and therefore a finer convex analysis that  helps for a better geometric understanding.

\end{abstract}

\maketitle
\setcounter{tocdepth}{1}
{ \small \tableofcontents}

\numberwithin{equation}{section}

\section*{Introduction}

Quadratic forms lay the foundation for various theories, taking a major role in their  studies. They play a similar role in supertropical mathematics \cite{QF1,QF2,Quasilinear,VR1,UB}, which is carried over the ``weaker'' structure of semirings.  As in classical theory, quadratic forms and  their bilinear companions lead to theories of  tropical trigonometry and convex geometry, where, due to the ``weak'' semiring structure,  the Cauchy-Schwarz inequality is  replaced by the Cauchy-Schwarz ratio, written CS-ratio. This type of convex geometry takes place over the space of rays -- equivalence classes of a suitable equivalence relation  \cite{Quasilinear}, and utilizes special characteristic functions, called CS-functions,  that emerge from the CS-ratio on ray spaces. The  CS-functions provide a useful tool for convex analysis, which is of much help in  understanding the variety of quasilinear stars in ray spaces  \cite{CSFunctions}.
This paper proceeds the study of CS-functions and the induced stratification of ray spaces.

 Classical quadratic forms can be explored via their supertropical images, which preserve some characteristic properties.
  These images are obtained by the use of suitable valuations and are easier to be classified  and investigated   in the supertropical setup where CS-functions appear. This approach becomes very effective when different bases are considered,   in particular when dealing with a system of quadratic forms \cite{CSFunctions}.


\subsection*{Supertropical semirings} $ $ \pSkip
Supertropical semirings carry  a rich algebraic structure   \cite{zur05TropicalAlgebra,nualg,IzhakianKnebuschRowen2010Linear,IR1,IR2} and provide the underlying structure of our framework.
A \textbf{supertropical semiring} is  a unital semiring~ $R$ with idempotent element $e:= e+e =1+1$
 such that,   for
all~ $a,b\in R$,  $a+b\in\{a,b\}$ whenever $ea \ne eb$ and  $a+b=ea$ otherwise. Consequently, $ea=0  \Rightarrow a =0$. The element~ $e$ determines the \textbf{ghost map} $\nu: a\mapsto ea$, and the \textbf{ghost ideal} $eR$ of~ $R$, which is a unital bipotent semiring,  i.e., $a+b\in \{ a,b\} $ for any $a,b\in eR$, and therefore totally ordered by
\begin{equation*}\label{eq:0.5}
a\le b \ds \Leftrightarrow a+b=b.
\end{equation*} Thereby, $R$ is equipped  with the $\nu$-ordering and  the $\nu$-equivalence:
\begin{equation*}\label{eq:nuorderring}
a <_\nu b \ds \Iff ea < eb, \qquad
a \nucong b \ds\Iff ea = eb,
\end{equation*}
 which  determine the addition of~$R$:
\begin{equation*}\label{eq:0.6}
a+b =\begin{cases} b&\ \text{if}\ a <_\nu b,\\
a&\ \text{if}\ a>_\nu  b,\\
eb&\ \text{if}\ a \nucong b.
\end{cases}
\end{equation*}
 The set
$\tT:=R\sm(eR)$ consists of the  \bfem{tangible} elements of $R$, while the set $\tG :=(eR)\sm\{0\}$ contains  the \bfem{ghost elements}. 
Nevertheless, the zero $0 = e0$ is regarded mainly as a ghost.
The semiring $R$ itself is said to be  \bfem{tangible}, if $e\tT=\tG$, i.e., $\tT$ is generates $R$  as
a semiring.
$R$  is a \textbf{supertropical semifield}, if in addition   both $\tT$ and $\tG$ are multiplicative abelian groups~ \cite[\S7]{IzhakianKnebuschRowen2010Linear}.

We stay in a purely tropical setting, but since later we intend to use results from the supertropical theory, we retain the supertropical notations, assuming that $R = eR$ is a bipotent semifield, $R = \{ 0 \}  \cup \tG$ with $\tG$ a totally ordered abelian group.
%
%
For formal reason we enlarge $R$ by an element $\infty$ to obtain a totally ordered set $\olR =R \cup \{ \infty\} $, with $x < \infty$ for all $x \in R$, on which the group $\tG$ acts by multiplication with orbits $\tG$, $\{ 0 \}$, $\{ \infty \}$. We further extend the automorphism $x \mapsto x^{-1}$ by putting
$0^{-1} = \infty$, $\infty^{-1} = 0$. The addition of~ $R$ extends to addition $\olR \times \olR \to \olR$ by the rule
$$x + y := \max \{ x,y \}. $$
We usually write $\olR = [0,\infty] $, where $R = [0,\infty [$ is a bipotent semified. The product $0 \cdot \infty$ is not defined.

At various places it will by helpful to have a bipotent semifield $\tlR \supset R$ in which for every $\lm \in R$ and $n \in \N$ there exists $\mu \in \tlR$ such that  $\mu^n = \lm.$

Recall that for every $n \in \N$ the ``$n$'th Frobenius map'' $\vrp_n: R \to R$, $\vrp_n(\lm) = \lm^n$, is an isomorphism form $R$ to a subsemifield $R^n$ of $R$. It follows that there is a bipotent semifield $R^{\frac 1n} \supset R$,  unique up to isomorphism over $R$,  such  that
\begin{equation*}\label{eq:1.7}
 R = \{ \lm^n \ds | \lm \in R^{\frac 1n}\}. \end{equation*}
(As already done for $n=2$ in \cite[\S3]{CSFunctions}.) We obtain a well defined bipotent semifield
\begin{equation*}\label{eq:1.8}
 \tlR = \bigcup_{n \in \N} R^{\frac 1 n} \supset R,\end{equation*}
where $R^{\frac 1 n} \subset R^{\frac 1 m}$  if $m | n$. We call $\tlR$ the \textbf{root closure} of $R$.
 For a given $\lm \in R$, $n \in \N$ we often denote the unique element $\mu$ of $\tlR$ with $\mu^n = \lm$ by $\sqrt[n]{\lm}$  or $\lm^{\frac 1n}$. For every $m \in Z$ we have the formula
$\sqrt[n]{\lm^m} = \sqrt[m]{\lm^n}.$

An $R$-module $V$ over a commutative supertropical semiring $R$  is defined in the familiar way.
A \textbf{ray} in $V$ is a class $\neq \00$ of the equivalence relation $x \simr y ,$ if  $\lm x = \mu y$ for some  $\lm, \mu \in R \sm \00$. We write $X =\ray(x)$ for the ray of a vector $x \in V \sm \00$, while  $\Ray(V)$ denotes the set of all rays in~ $V$, called the \textbf{ray space} of $V$.

\subsection*{CS-functions} $ $ \pSkip
In the sequel  we assume that a quadratic pair $(q,b)$ is given on
 $V$, i.e., a quadratic form $q: V \to R$, and a bilinear companion, i.e., $b:V \times V \to R$, satisfying
$$q(x+y) = q(x) + q(y) + b(x,y)$$
for all $x,y \in V$.
We further assume (up to \S\ref{sec:4}) that $q$ is anisotropic, i.e., $q(x) \neq 0$ for $x \neq 0$. For every pair $(x,y)$ in $V \sm \00$ we then have the \textbf{CS-ratio}
$$ \CS(x,y) := \frac{b(x,y)^2}{q(x)q(y)} \in R.$$
This CS-ration only depends on $\ray(x)$ and $\ray(y)$. Consequently, we define
$$ \CS(X,Y) := \CS(x,y)$$
for any rays $X,Y$ and vectors $x \in X$, $y \in Y$.

A \textbf{CS-function} on the ray space $\Ray(V)$ is a map $$f:\Ray(V) \To R,$$ for which there exists a ray $W\in \Ray(V)$ such that
$f(X) = \CS(X,W)$ for all $X \in \Ray(V)$.
CS-functions determine  functions $$ f_w: [0,\infty] \To R, \qquad w \in V \sm \00, $$ which  are of much help to understand parameterizations of intervals $[Y_1, Y_2]$ by elements of~ $\olR$, cf. \cite[\S7]{QF2} to be revised below in \S\ref{sec:1}. Each  function $f_w$ induces a subdivision of $[0,\infty]$, over which the behaviour of $f_w$ can be described explicitly by a monomial $\gm \lm^i$ (Proposition~ \ref{prop:1.5} and Theorem \ref{thm:1.7.a}). So these functions  are special cases of piecewise monomial functions, which  can be compared (Theorem \ref{thm:1.6}) and are carefully analyzed below (Theorem \ref{thm:1.7}).

Relaying this fine analysis and profiles of the CS-functions $\CS(W,-)$ on  a fixed closed interval ${[Y_1,Y_2]}$, cf. \cite[\S4 and \S5]{CSFunctions}, we introduce a much more general partition of $\Ray(V)$ into convex sets which supports the notions ``basic types'', ``relaxations'',  ``composed types'', and ''separation''.
This study leads to a  Sign Changing Theorem (Theorem \ref{thm:2.13}), which serves as a main tool in our advanced convex  analysis.

``Direct derivations'', as defined below (Definition \ref{def:2.7}),  provide a notion of neighbors of a stratum, and thereby a sequence of steps for a possible passage from a convex set to another one, considering  their types. Such steps are specified by junctions and butterflies, relaying on the notion of a regular ray, and give a systematic process to build a sequence of steps (Construction~ \ref{const:4.15}, derived from Theorem \ref{thm:4.13}, Corollary \ref{cor:4.14}, and Proposition ~\ref{prop:4.17}). It gives  instances of special sets assigned to each endpoint of an interval,  which can be enlarged by a widely used formal   saturation process, preserving basic inclusion properties (Theorem~ \ref{thm:4.19}).
The construction is extended in \S\ref{sec:4} to closed intervals with isotropic endpoints, which contain an anisotropic ray in their interior, and in consequence consist entirely of such rays.

\subsection*{An outlook} $ $ \pSkip
Much of the present paper discusses  functions $f: \Ray(V) \to R$ which are linear combinations of CS-functions with coefficients in $R \sm \00$. We denote the set of all these functions by $\mfM.$ Given a closed interval $[Y_1,Y_2]$, $Y_1 \neq Y_2$, we may ``compare'' the restrictions of two such functions $f$ and $g$ to $[Y_1, Y_2]$. It turns out that there is a succession of rays
$$ Z_0 = Y_1 <_{Y_1}Z_1  <_{Y_1} \cdots  <_{Y_1} Z_r = Y_2$$
such that on each open interval $]Z_{i-1}, Z_i[$ everywhere $f <g $, $f= g$, or $f >g$.
Here we employ the total ordering $ <_{Y_1}$ on $[Y_1,Y_2]$ appearing in \cite[\S7]{QF2}; that is
$$ Z \leq_{Y_1} Z' \dss\Leftrightarrow [Y_1,Z] \subset [Y_1, Z'].$$
(There is  a second total ordering $<_{Y_2}$ on $[Y_1, Y_2] = [Y_2, Y_2]$, reverse to $<_{Y_1}$.)

If, say, $f < g$ on $]Z_{i-1},Z_i[$,  we have $f(Z_{i-1})< g(Z_{i-1})$ or $f(Z_{i-1}) = g(Z_{i-1})$, and $f(Z_{i})< g(Z_{i})$ or $f(Z_{i}) = g(Z_{i})$. Thus, the set
$\{ X \in [Z_{i-1},Z_i] \ds | f(X) < g(X)\} $ is one of the sets $[Z_{i-1},Z_i]$, $]Z_{i-1},Z_i]$, $[Z_{i-1},Z_i[$, $]Z_{i-1},Z_i[$ . These four options are exclusive, since $R$ is a nontrivial dense totally ordered semifield, cf. e.g. \cite[Proposition 8.1]{QF2}.
If $f >g $ on $]Z_{i-1},Z_i[$, we face the same situation with $f$ and $g$ interchanged.

This panorama  leads to a seemingly large variety of membership problems in $\mfM$. Given  subsets $\mfB \subset \mfL$ of $\mfM$, we look for a set $\mfD \subset \mfM$, consisting of ``simple functions'' in some sense, and a set $J$ of closed intervals $[Y_1, Y_2]$ in $\Ray(V)$, such that a given $f \in \mfL$ can be tested  to be a member of $\mfB$ by comparing the restrictions  of $f$ and functions $g \in\mfD$ on these intervals $[Y_1, Y_2]$. Such a pair  $(\mfD,J)$ is termed a \textbf{toolbox} for $(\mfB,\mfL)$. The central question is: when does there exist an efficient toolbox  for $(\mfB,\mfL)$?

Most CS-functions $\CS(W,-)$ on $\Ray(V)$ can be combined linearly from ``simpler'' CS-function $\CS(W_i,-)$ in many ways. Indeed, choosing a vector $w \in W$ and taking a linear combination $w = \lm_1 w_1 + \cdots + \lm_r w_t$ with $w_i \in V \sm \00$,
$\lm_i \in R \sm \00$, let $W_i \in \ray(w_i)$ and
$$ \al_i := \frac{q(\lm_i w_i)}{q(w)} = \frac{\lm_i^2 q( w_i)}{q(w)} \leq e.$$
Then
$$ \CS(W,X) = \sum_{i=1}^r \al_i\CS(W_i,X)$$
for any $X \in \Ray(V)$, cf.  \cite[Lemma 5.1]{QF2}. This formula underscores  the importance to fix $\mfB$ and $\mfL$ in $ \mfM$ precisely for a meaningful membership problem.


\section{Uniqueness in the parametrization of a closed ray interval}\label{sec:1}

Let $Y_1$ and $Y_2$ be two different rays on $V$. Choosing vectors $\veps_1 \in Y_1$, $\veps_2 \in Y_2$, we have a surjective map
$$ \pi_{\veps_1, \veps_2}: [0,\infty] \Onto [Y_1, Y_2],$$
defined by
$$ \pi_{\veps_1, \veps_2}(\lm) := \ray(\veps_1 + \lm \veps_2), $$
cf. \cite[\S7]{QF2}. (Read $\pi_{\veps_1, \veps_2}(\infty) = Y_2$.)
Furthermore,  there is a partial ordering $\leq_{Y_1}$ on $\Ray(V)$ defined by
$$ Z \leq_{Y_1} Z' \dss \Iff [Y_1, Z] \subset [Y_1, Z'],$$
which restricts to a total ordering on $[Y_1, Y_2]$, cf. \cite[\S8]{QF2}. It is evident that $\pi_{\veps_1, \veps_2}$ is an increasing map from $[0, \infty]$ to $[Y_1,Y_2]$ with respect to the two total orderings. Thus, it is a priori  clear that for every $Z \in [Y_1, Y_2]$ the fiber $\pi^{-1}_{\veps_1, \veps_2}$ is a convex subset of $[0,\infty].$

We start out to determine some of these fibers explicitly by a careful look at the CS-functions restricted to $[Y_1, Y_2]$. Given a vector $w \in V \sm \00$, we have
\begin{equation}\label{eq:a.1.1}
  \CS(\veps_1 + \lm \veps_2, w) := \frac{b(\veps_1,w)^2 + \lm^2 b(\veps_2,w)^2 }{(\al_1 + \al_{12}\lm + \al_2 \lm ^2) q(w)},
\end{equation}
with $\al_1 := q(\veps_1)$, $\al_2 := q(\veps_2)$, $\al_{12} := b(\veps_1, \veps_2)$. In the following we assume for simplicity \emph{that $R$ is square closed}, i.e., $R = R^{\frac 12}.$ Note that now the total ordering on $\olR = [0,\infty]$  is dense, since $\lm < \mu $ implies $\lm < \sqrt{\lm \mu } < \mu.$

The function $\CS(\veps_1 + \lm \veps_2, w)$ is zero on the entire  set $[0,\infty]$, if $b(\veps_1,w) = b(\veps_2,w) = 0$, and then will be of no use for us. Therefore, we consider only  the vectors
\begin{equation}\label{eq:a.1.2}
  w \in V \sm \veps_1^\perp \cap \veps_2^\perp,
\end{equation}
where $\veps_i^\perp := \{ x \in V \ds | b(\veps_i, x) = 0\} $.
For these vectors $w$ we abbreviate
\begin{equation}\label{eq:a.1.3}
  f_w (\lm) := \frac{b(\veps_1,w)^2 + \lm^2 b(\veps_2,w)^2 }{(\al_1 + \al_{12}\lm + \al_2 \lm ^2) q(w)},
\end{equation}
 and obtain functions $f_w:[0,\infty] \to R$  which are nowhere zero on $]0,\infty[$ .
\begin{defn}\label{def:1.1}
We say that an $\olR$-valued function $f: C \to \olR$ on a convex subset  $C$ of $[0,\infty]$ is \textbf{monomial}, if
\begin{equation}\label{eq:a.1.4}
f(\lm) = \gm \lm^j
\end{equation}
for $\lm \in C$,  $j \in \Z$, and fixed $\gm \in R \sm \00$.
We call $j$ the \textbf{monomial degree} of $f$. Alternatively, we say that $f$ is $\lm^j$-monomial.
\end{defn}

Note that then $f $ avoids the values $0$ and $\infty $ on $C \cap \; ]0,\infty[$ . Furthermore, $f$ is either strictly increasing ($j > 0$) or strictly decreasing ($j < 0$), or constant ($j=0$).
\begin{defn}\label{def:1.2}
We call a function $f:[u,v] \to \olR$ on a subinterval $[u,v]$ of $[0,\infty]$ \textbf{piecewise monomial}, or  \textbf{pm} for short, if there is a \textbf{monomial subdivision} of $[u,v]$, i.e., a finite sequence
\begin{equation}\label{eq:a.1.5}
u = \al_0 < \al_1 < \cdots < \al_r = v
\end{equation}
in $R$, such that every restriction $f|[\al_{s-1},\al_s]$, $1 \leq s \leq r$, is monomial.
\end{defn}
We then have a sequence $(j_1, \dots, j_r)$ of the monomial degrees of $f$ on the intervals
$[\al_{s-1},\al_s]$, $1 \leq s \leq r$. If in this sequence $j_{s} = j_{s+1}$ for some $1 \leq s \leq r$, then $f$ is monomial of degree $j_{s} = j_{s+1}$ on $[\al_{s-1},\al_s]$, and so we can omit the point $\al_s$ in \eqref{eq:a.1.5}.  Repeating this process we finally obtain a subdivision \eqref{eq:a.1.5} of $[u,v]$ where $j_{s} \neq j_{s+1}$ for the monomial degrees of adjacent intervals
$[\al_{s-1},\al_s]$, $[\al_{s},\al_{s+1}]$. We call the obtained sequence \eqref{eq:a.1.5} the \textbf{reduced monomial subdivision} of $[u,v]$ with respect to the function  $f:[u,v ]\to  \olR $, and the sequence
$(j_1, \dots, j_r)$ of associated degrees the \textbf{reduced degree sequence} of $f$.

It easily seen that the reduced  monomial sequence and the reduced degree sequence are invariants of the pm function. (Hint: Given two subdivisions \eqref{eq:a.1.5}, first pass to a common refinement.)

\begin{rem}\label{rem:1.3}
  Given two pm functions $f:[u,v] \to \olR$ and $g:[u,v] \to \olR$, and a monomial subdivision \eqref{eq:a.1.5} of $[u,v]$ for $f$ and for $g$, we can refine both subdivisions to a common monomial subdivision.  Considering this common monomial subdivision, it is clear that the functions
  $\frac 1 f$, $f+g = \max(f,g)$, $f \wedge g = \min(f,g)$, $fg$ are again pm. \{These functions are defined point-wise  in the naive sense $(f+g) (\lm) = f(\lm) + g(\lm)$ etc.\} Obviously
  \begin{equation}\label{eq:a.1.6}
(f +g )\cdot(f \wedge g) = fg.
\end{equation}
\end{rem}

We return to the function $f_w$ from \eqref{eq:a.1.3}, always with $w \in V \sm \veps_1^\perp \cap \veps_2^\perp$,  which  are easily seen to be pm. We discuss this in details. The nominator $b(\veps_1,w)^2 + \lm^2 b(\veps_2,w)^2$ in \eqref{eq:a.1.3} is constant on
$[0, \frac{b(\veps_1,w)}{b(\veps_2,w)}]$  and $\lm^2$-monomial   on
$[\frac{b(\veps_1,w)}{b(\veps_2,w)}, \infty]$, provided that $\frac{b(\veps_1,w)}{b(\veps_2,w)} \neq 0$, which occurs if $b(\veps_1,w) = 0$ and $b(\veps_2,w)> 0$. In this case
$b(\veps_1,w)^2 + \lm^2 b(\veps_2,w)^2$ is $\lm^2$-monomial on $[0,\infty]$.

Considering the function
  \begin{equation}\label{eq:a.1.7}
  q(\veps_1 + \lm \veps_2) = \al_1 + \al_{12}\lm + \al_2 \lm ^2
\end{equation}
on $[0,\infty]$, we distinguish two cases:
\begin{description}\dispace
  \item[Case 1] $\al_1 \al_2 < \al_{12}^2$,
  \item[Case 2] $\al_1 \al_2 \geq \al_{12}^2$.
\end{description}

In Case 1 the term $\al_1$ in \eqref{eq:a.1.7} is dominant iff $\al_{12} \lm \leq \al_1$
and $\al_{2} \lm^2  \leq \al_1$, i.e., $\lm \leq \frac{\al_1}{\al_{12}} \wedge \sqrt{\frac{\al_1}{\al_{2}}}$. This simplifies to $\lm \leq \frac{\al_1}{\al_{12}}$, since $\frac{\al_1^2}{\al_{12}^2} < \frac{\al_1}{\al_{2}}$.
The term $\al_{12} \lm $ in \eqref{eq:a.1.7} is dominant iff $\al_1 \leq \al_{12} \lm $
and $\al_{2} \lm^2  \leq \al_1 \lm $, i.e., $ \frac{\al_1}{\al_{12}} \leq \lm \leq   {\frac{\al_1}{\al_{2}}}$. Finally,
the term $\al_2 \lm^2$  is dominant iff $\al_{1} \leq \al_2 \lm ^2$
and $\al_{12} \lm  \leq \al_2 \lm ^2$, i.e., $ \sqrt{\frac{\al_1}{\al_{2}}} + \frac{\al_{12}}{\al_{2}}  \leq \lm $, which simplifies to $ \frac{\al_{12}}{\al_{2}} \leq \lm$.

In Case 2 the quadratic form $q|R \veps_1 + R \veps_2$ is quasilinear, and so
$$ \al_1 + \al_{12}\lm + \al_2 \lm ^2  = \al_1 +  \al_2 \lm ^2 .$$
This function is constant on $[0, \sqrt{\frac{\al_1}{\al_2}}]$
and monomial of degree 2 on $[\sqrt{\frac{\al_1}{\al_2}}, \infty]$.

We summarize this, also for later use.
\begin{lem}\label{lem:1.4} $ $
  \begin{enumerate}\ealph
    \item If $\al_1 \al_2 < \al_{12}^2$, then  the function
    $q(\veps_1 + \lm \veps_2)$ is constant on
    $[0, {\frac{\al_1}{\al_{12}}}]$, monomial of degree 1 on
    $[\frac{\al_1}{\al_{12}}, {\frac{\al_{12}}{\al_2}}]$, and monomial of degree 2 on
    $[\frac{\al_{12}}{\al_2}, \infty]$.

     \item If $\al_1 \al_2 \geq  \al_{12}^2$, then  the function
    $q(\veps_1 + \lm \veps_2)$ is constant on
    $[0, \sqrt{\frac{\al_1}{\al_{2}}}]$, and monomial of degree 2 on
    $[\sqrt{\frac{\al_{1}}{\al_2}}, \infty]$.

  \end{enumerate}
\end{lem}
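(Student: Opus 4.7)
The plan is to exploit the fact that $R$ is bipotent, so addition acts as the maximum; hence the expression $q(\veps_1+\lm\veps_2)=\al_1+\al_{12}\lm+\al_2\lm^2$ agrees, at each $\lm$, with whichever of the three monomials $\al_1$, $\al_{12}\lm$, $\al_2\lm^2$ is $\nu$-largest. The proof therefore reduces to a pairwise comparison of these three monomials and identifying the thresholds where dominance switches. In fact, the discussion preceding the lemma essentially carries out these comparisons, so the ``proof'' is a matter of recording the outcome and verifying the consistency of the threshold formulas.

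First I would note the three pairwise inequalities:
$\al_1\geq \al_{12}\lm\Leftrightarrow \lm\leq \al_1/\al_{12}$,
$\al_{12}\lm\geq \al_2\lm^2\Leftrightarrow \lm\leq \al_{12}/\al_2$,
and $\al_1\geq \al_2\lm^2\Leftrightarrow \lm\leq\sqrt{\al_1/\al_2}$.
These identities (in the root closure $\tlR$, though the thresholds appearing in the statement live in $R$ since we assume $R=R^{\frac12}$) are the only ingredients needed.

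For part (a), under the assumption $\al_1\al_2<\al_{12}^2$ one has $\al_1/\al_{12}<\al_{12}/\al_2$, and squaring gives $(\al_1/\al_{12})^2<\al_1/\al_2$, hence $\al_1/\al_{12}<\sqrt{\al_1/\al_2}<\al_{12}/\al_2$. I would use this ordering of thresholds to split $[0,\infty]$ into the three advertised subintervals: on $[0,\al_1/\al_{12}]$ the term $\al_1$ beats both others; on $[\al_1/\al_{12},\al_{12}/\al_2]$ the term $\al_{12}\lm$ beats both (the inequality with $\al_1$ reverses at the left endpoint, the inequality with $\al_2\lm^2$ reverses at the right endpoint); and on $[\al_{12}/\al_2,\infty]$ the term $\al_2\lm^2$ dominates. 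In each piece the function is therefore monomial of degree $0$, $1$, $2$, respectively.

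For part (b), I would observe that $\al_1\al_2\geq\al_{12}^2$ forces $\al_{12}\lm\leq \sqrt{\al_1\al_2}\,\lm=\sqrt{\al_1\cdot\al_2\lm^2}\leq\al_1+\al_2\lm^2$ for every $\lm$, so the middle term is absorbed and $q(\veps_1+\lm\veps_2)=\al_1+\al_2\lm^2$. Comparing $\al_1$ with $\al_2\lm^2$ via the threshold $\sqrt{\al_1/\al_2}$ yields constancy on $[0,\sqrt{\al_1/\al_2}]$ and $\lm^2$-monomiality on $[\sqrt{\al_1/\al_2},\infty]$.

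I do not anticipate any genuine obstacle; the only care required is in handling the thresholds when some $\al_i$ vanishes and in recording that the endpoints belong to both adjacent pieces (both monomial expressions agree at the endpoint by definition of the threshold), so the subdivision is genuinely a monomial subdivision in the sense of Definition~\ref{def:1.2}.
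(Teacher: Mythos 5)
Your proposal is correct and follows essentially the same route as the paper: the text immediately preceding the lemma carries out exactly these pairwise dominance comparisons of $\al_1$, $\al_{12}\lm$, $\al_2\lm^2$, derives the same ordering of thresholds from the case hypothesis, and in Case 2 absorbs the middle term by quasilinearity (your $\sqrt{\al_1\al_2}\,\lm\le\al_1+\al_2\lm^2$ argument). Nothing further is needed.
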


The symmetry in these statements  is also clear from the fact that
\begin{equation}\label{eq:a.1.8}
\ray(\veps_1 + \lm \veps_2) = \ray(\veps_2 + \lm^{-1} \veps_1).
  \end{equation}

From the above analysis of the nominator and denominator in  \eqref{eq:a.1.8} we obtain the following.

\begin{prop}\label{prop:1.5} $ $
  \begin{enumerate}\ealph
    \item Let $\al_1 \al_2 < \al_{12}^2$.  Then  $f_w$ is constant on
    $$ A_w := \bigg[ 0, \ \frac{b(\veps_1,w)}{b(\veps_2,w)} \wedge \frac{\al_1}{\al_{12}} \bigg]$$
and on
    $$ C_w := \bigg[\frac{b(\veps_2,w)}{b(\veps_1,w)} + \frac{\al_{12}}{\al_2},  \ \infty \bigg].$$
    On
        $$ B_w := \bigg[ \frac{b(\veps_1,w)}{b(\veps_2,w)} \wedge \frac{\al_1}{\al_{12}}, \
        \frac{b(\veps_2,w)}{b(\veps_1,w)} + \frac{\al_{12}}{\al_{2}}  \bigg]$$
        $f_w$ is pm and not constant on any subinterval.

    \item Let $\al_1 \al_2 \geq  \al_{12}^2$.  Then  $f_w$ is constant on
    $$ A_w := \bigg[ 0, \  \frac{b(\veps_1,w)}{b(\veps_2,w)} \wedge\sqrt{ \frac{\al_1}{\al_{2}}} \bigg]$$
and on
    $$ C_w := \bigg[\frac{b(\veps_2,w)}{b(\veps_1,w)} + \sqrt{\frac{\al_{2}}{\al_1}}, \ \infty \bigg].$$
    On
        $$ B_w := \bigg[ \frac{b(\veps_1,w)}{b(\veps_2,w)} \wedge\sqrt{ \frac{\al_1}{\al_{2}}}, \
        \frac{b(\veps_2,w)}{b(\veps_1,w)} + \sqrt{\frac{\al_{2}}{\al_{1}}}  \bigg]$$
        $f_w$ is pm without being constant on any subinterval, provided that $B_w$ is not a singleton.
If $B_w$ is a singleton, then $f_w$ is constant on $[0,\infty]$.

  \end{enumerate}

\end{prop}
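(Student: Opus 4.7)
The plan is to write $f_w = N/(D\, q(w))$ with $N(\lm) := b(\veps_1,w)^2 + \lm^2 b(\veps_2,w)^2$ and $D(\lm) := q(\veps_1+\lm\veps_2) = \al_1 + \al_{12}\lm + \al_2\lm^2$, recognize both $N$ and $D$ as piecewise monomial on $[0,\infty]$, form the common refinement of their monomial subdivisions (in the spirit of Remark~\ref{rem:1.3}), and read off the constancy of $f_w$ on each piece as the equality there of the monomial degrees of $N$ and $D$.

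First I would analyze $N$. Bipotency gives $N(\lm) = \max(b(\veps_1,w)^2,\,\lm^2 b(\veps_2,w)^2)$, so with $\lm_0 := b(\veps_1,w)/b(\veps_2,w)$ (read as $0$ or $\infty$ in the degenerate cases, which cannot occur simultaneously by \eqref{eq:a.1.2}), $N$ is constant on $[0,\lm_0]$ and $\lm^2$-monomial on $[\lm_0,\infty]$. For $D$ I would invoke Lemma~\ref{lem:1.4}: in Case~(a) the degrees run $0,1,2$ across the breakpoints $\al_1/\al_{12} < \al_{12}/\al_2$, and in Case~(b) the degrees are $0,2$ across the single breakpoint $\sqrt{\al_1/\al_2}$. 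Overlaying the two subdivisions, on each piece of the common refinement $f_w$ is a ratio of monomials, hence monomial, and the constant pieces are exactly those where $N$ and $D$ share their monomial degree. In Case~(a) these agreements occur at degree $0$ on $[0,\lm_0 \wedge \al_1/\al_{12}]$ and at degree $2$ on $[\max(\lm_0,\al_{12}/\al_2),\infty]$, matching the announced $A_w$ and $C_w$; on the complement $B_w$ every refinement piece pairs distinct degrees of $N$ against $D$, and a short bookkeeping through the three subcases for the position of $\lm_0$ relative to $\al_1/\al_{12}$ and $\al_{12}/\al_2$ shows that the monomial degrees of $f_w$ strictly change from one piece to the next. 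The invariance of the reduced degree sequence (Remark~\ref{rem:1.3}) then forces $f_w|B_w$ to be non-constant on any subinterval.

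Case~(b) runs identically with the single denominator breakpoint: as long as $\lm_0 \neq \sqrt{\al_1/\al_2}$, the set $B_w$ has exactly one refinement piece carrying monomial degree $+2$ or $-2$, so $f_w|B_w$ is a non-constant monomial. When $\lm_0 = \sqrt{\al_1/\al_2}$, the set $B_w$ degenerates to a single point and $A_w,C_w$ tile $[0,\infty]$; evaluating the two candidate constants yields $b(\veps_1,w)^2/(\al_1 q(w))$ on $A_w$ and $b(\veps_2,w)^2/(\al_2 q(w))$ on $C_w$, and the identity $\lm_0^2 = \al_1/\al_2 = (b(\veps_1,w)/b(\veps_2,w))^2$ forces these to agree, so $f_w$ is constant on all of $[0,\infty]$. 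The main obstacle is pure bookkeeping: one must enumerate the possible orderings of $\lm_0$ against the denominator breakpoints and confirm in each configuration that adjacent pieces of the refinement of $B_w$ carry distinct monomial degrees. The only genuinely delicate step is the singleton degeneration in~(b), where non-constancy on $B_w$ fails and must be replaced by the boundary-matching identity just outlined.
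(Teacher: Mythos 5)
Your argument is correct and is essentially the paper's own proof: the numerator of $f_w$ has reduced degree sequence $(0,2)$, the denominator has $(0,1,2)$ in case (a) and $(0,2)$ in case (b) by Lemma~\ref{lem:1.4}, and $f_w$ is constant exactly on the pieces of the common refinement where the two degrees agree; you merely carry out the bookkeeping (and the singleton degeneration in (b)) that the paper leaves implicit. One caveat: the crossover of the numerator is at $\lm_0=b(\veps_1,w)/b(\veps_2,w)$, so the set where both degrees equal $2$ is $\bigl[\,b(\veps_1,w)/b(\veps_2,w)+\al_{12}/\al_2,\ \infty\,\bigr]$, which does \emph{not} literally match the printed $C_w$ (whose left endpoint uses the inverted ratio $b(\veps_2,w)/b(\veps_1,w)$); your endpoint is the correct one, as the symmetry \eqref{eq:a.1.8} confirms, so this is a typo in the statement rather than a flaw in your proof, but you should flag the discrepancy instead of asserting a match.
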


   \begin{proof}
     The nominator in \eqref{eq:a.1.3} has the reduced degree sequence $(0,2)$, while the dominator has the reduced degree sequence $(0,1,2)$ in Case 1, and $(0,2)$ in Case 2. Constance of~ $f_w$ happens on the convex sets, where the monomial degrees of the nominator and the denominator coincide. This gives all  claims.
   \end{proof}

 \begin{defn}\label{def:1.6} Given rays $Y_1, Y_2$, we say that the parameter $\lm_0 \in [0,\infty]$ is \textbf{unique to the right} for $(Y_1,Y_2)$, if
\begin{equation}\label{eq:a.1.9}
\ray(\veps_1 + \lm \veps_2) \neq
\ray(\veps_1 + \lm_0\veps_2)
  \end{equation}
  for $\lm > \lm_0$, $\veps_1 \in Y_1$, $\veps_2 \in Y_2$,  and that $\lm_0$ is \textbf{unique to the left} for $(Y_1, Y_2)$, if \eqref{eq:a.1.9} holds for $\lm < \lm_0$. If both properties are valid, i.e., $\pi^{-1}(\pi(\lm_0)) = \{ \lm_0\}$ for
  $\pi = \pi_{\veps_1,\veps_2}$, we say that $\lm_0$ is \textbf{unique} for $(Y_1, Y_2).$
\end{defn}
With this terminology Proposition \ref{prop:1.5} has the following obvious consequence.

\begin{thm}\label{thm:1.7.a}
  Let $Y_1$ and $Y_2$ be different rays in $V$, and in the notations of Proposition \ref{prop:1.5} let
  $B_w = [u_w, v_w]$ for $w \in V \sm \veps_1^\perp \cap \veps_2^\perp$ (e.g., if $\al_1 \al_2 < \al_{12}$, then $u_w = \frac{b(\veps_1,w)}{b(\veps_2,w)} \wedge \frac{\al_1}{\al_{12}}$).
  Then $\lm_0 \in [0, \infty]$ is unique to the right for $(Y_1,Y_2)$, if there exists some  $w \in V \sm \veps_1^\perp \cap \veps_2^\perp$ with $\lm_0 \in \; ]u_w, v_w] $, and $\lm_0$ is unique to the left, if there exists such vector $w$ with $\lm_0 \in [u_w, v_w[ $.
\end{thm}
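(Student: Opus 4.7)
The plan is to combine the ray invariance of the CS-ratio with the convexity of the fibers of $\pi:=\pi_{\veps_1,\veps_2}$. Since $\CS(X,Y)$ depends only on the rays $X,Y$, we have $f_w(\lm)=\CS(\pi(\lm),\ray(w))$, and therefore any strict inequality $f_w(\lm)\ne f_w(\lm_0)$ already certifies $\pi(\lm)\ne\pi(\lm_0)$. Because the fibers of $\pi$ were already observed to be convex subintervals of $[0,\infty]$, to prove that $\lm_0$ is unique to the right it suffices to produce, for the given $w$, a right-neighborhood $(\lm_0,\lm_0+\delta)$ on which $f_w$ does not revisit the value $f_w(\lm_0)$; by fiber convexity the fiber through $\lm_0$ then terminates at $\lm_0$ on the right.

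To produce such a neighborhood I would invoke Proposition~\ref{prop:1.5}: on $B_w=[u_w,v_w]$ the function $f_w$ is piecewise monomial and not constant on any subinterval. Let $u_w=\al_0<\al_1<\cdots<\al_r=v_w$ be the reduced monomial subdivision of $B_w$ adapted to $f_w$. Non-constancy on every subinterval forces every reduced degree $j_s$ to be nonzero, so $f_w|_{[\al_{s-1},\al_s]}$ is a strictly monomial function of degree $j_s$, in particular injective. The assumption $\lm_0\in\,]u_w,v_w]$ means $\lm_0>u_w=\al_0$, so $\lm_0$ lies in some piece $[\al_{s-1},\al_s]$ with $s\ge 1$; choose $s$ so that $\lm_0<\al_s$ whenever $\lm_0$ is itself a breakpoint. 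Injectivity on this piece then yields $f_w(\lm)\ne f_w(\lm_0)$ for every $\lm\in(\lm_0,\al_s]$, which supplies the right-neighborhood required in the first paragraph. The left-uniqueness statement under $\lm_0\in[u_w,v_w[$ is established by the mirror argument, applying injectivity of $f_w$ on the piece $[\al_{s-1},\al_s]$ containing $\lm_0$ with $\lm_0>\al_{s-1}$, to the left of $\lm_0$.

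The step that carries the conceptual content is Proposition~\ref{prop:1.5}; once strict monomiality of each piece of the reduced subdivision of $B_w$ is in hand, the rest is essentially a direct translation of piecewise injectivity into $\pi$-separation via ray invariance. The one subtle point, and the main obstacle I expect, is the right-endpoint case $\lm_0=v_w$: no piece of the reduced subdivision of $B_w$ lies strictly to the right of $\lm_0$, and $f_w$ is constant on $C_w$, so the single function $f_w$ furnished by the hypothesis cannot by itself witness the required right-separation. I would deal with this either by invoking the inversion identity $\ray(\veps_1+\lm\veps_2)=\ray(\veps_2+\lm^{-1}\veps_1)$ from \eqref{eq:a.1.8} to turn the right-endpoint case for $(Y_1,Y_2)$ into the symmetric situation for $(Y_2,Y_1)$, or by a perturbation argument that slightly adjusts $w$ to a nearby $w'$ placing $\lm_0$ strictly inside $]u_{w'},v_{w'}[$, where the interior argument above applies verbatim.
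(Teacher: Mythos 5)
Your overall strategy --- factor $f_w$ through $\pi_{\veps_1,\veps_2}$, use convexity of the fibers of $\pi$, and feed in the non-constancy of $f_w$ on subintervals of $B_w$ from Proposition \ref{prop:1.5} --- is exactly the intended one (the paper offers no proof beyond calling the theorem an ``obvious consequence'' of Proposition \ref{prop:1.5}). But there is a genuine directional gap in the execution. What the hypothesis $\lm_0\in\;]u_w,v_w]$, i.e.\ $\lm_0>u_w$, actually buys is this: for every $\lm<\lm_0$ the interval $[\lm,\lm_0]$ contains the nondegenerate subinterval $[\max(\lm,u_w),\lm_0]$ of $B_w$, on which $f_w$ is not constant; hence $\pi$ is not constant on $[\lm,\lm_0]$, and fiber convexity gives $\pi(\lm)\neq\pi(\lm_0)$. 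That is uniqueness to the \emph{left}. Dually, $\lm_0<v_w$ is what produces a nondegenerate piece of $B_w$ immediately to the \emph{right} of $\lm_0$, and hence right-uniqueness. Your proof instead tries to extract a right-neighborhood from $\lm_0>u_w$; this works only when $\lm_0<v_w$ as well, and you correctly flag $\lm_0=v_w$ as the obstacle --- but neither of your proposed repairs closes it. The inversion $\ray(\veps_1+\lm\veps_2)=\ray(\veps_2+\lm^{-1}\veps_1)$ sends $B_w$ to $[v_w^{-1},u_w^{-1}]$ and converts ``right-uniqueness at the right endpoint $v_w$'' into ``left-uniqueness at the left endpoint $v_w^{-1}$'', which is the same problem in mirror image; and the perturbation to some $w'$ with $\lm_0\in\;]u_{w'},v_{w'}[$ is not available, since the theorem asserts the conclusion from the existence of a single $w$ with $\lm_0\in\;]u_w,v_w]$ and no such $w'$ need exist.

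The gap cannot be closed for the statement as literally printed, because at $\lm_0=v_w$ right-uniqueness can genuinely fail: $f_w$ is constant on $C_w=[v_w,\infty]$, and nothing prevents the fiber $\pi^{-1}(Y_2)=[\mu,\infty]$ from satisfying $\mu=v_w<\infty$ (non-constancy on $B_w$ only forces $\mu\geq v_w$), in which case $\pi(\lm)=Y_2=\pi(v_w)$ for all $\lm>v_w$. The conclusion to draw is that the two half-open intervals in the statement are interchanged: $\lm_0\in[u_w,v_w[$ should give uniqueness to the right and $\lm_0\in\;]u_w,v_w]$ uniqueness to the left. With that correction your machinery collapses to a short argument that needs neither the reduced subdivision nor piecewise injectivity: for $\lm>\lm_0$ the interval $[\lm_0,\lm]$ contains the nondegenerate subinterval $[\lm_0,\min(\lm,v_w)]$ of $B_w$, where $f_w$ is not constant by Proposition \ref{prop:1.5}, so $\pi$ is not constant on $[\lm_0,\lm]$ and convexity of the fibers finishes.
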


%
%
%
%
\section{Piecewise monomial functions in the ray space}\label{sec:b}
In the following, for simplicity, we always assume   that $R$ is root closed. Let $C$ be a convex subset of $\Ray(V)$ and assume that $Y_1$ and $Y_2$ are different rays in $C$. After a choice of vectors $\veps_1 \in Y_1$ and $\veps_2 \in Y_2$, we have the parametrization
$$ \pi_{\veps_1, \veps_2}: [0,\infty]\Onto  [Y_1, Y_2],
\qquad \lm \mapsto \ray(\veps_1 + \lm \veps_2),$$
as studied in \S\ref{sec:1}.

\begin{defn}\label{def:b.1} We say that a function $F: C \to \olR$ is \textbf{monomial of degree $i$ on $[Y_1, Y_2]$}, or \textbf{$\lm^i$-monomial}, if the map
$$ F_{\veps_1, \veps_2}:= F \circ \pi_{\veps_1, \veps_2}:  [0,\infty]\Onto \olR$$
is $\lm^i$-monomial. We call $F$ \textbf{piecewise monomial} on $[Y_1,Y_2]$, or \textbf{pm} for short,  if $F_{\veps_1, \veps_2}$ is pm on $[0,\infty]$, as defined in \S\ref{sec:1}.
\end{defn}

\begin{examp}\label{exmp:b.2} For a ray $W$ in $V$ the $R$-valued function $X \mapsto \CS(X,W)$ on $\Ray(V)$ is pm on every interval $[Y_1, Y_2]$ with $Y_1 \neq Y_2$, for which $w \notin V \sm \veps^\perp_1 \cap \veps^\perp_2$ for fixed vectors $w \in W$, $\veps_2 \in Y_1$,  $\veps_2 \in Y_2$, as is clear from \S\ref{sec:1}, cf. Proposition \ref{prop:1.5}.

\end{examp}

\begin{comm}\label{com:b.3}
The functions $F_{\veps_1, \veps_2}$ depend on the choice of the vectors $\veps_1, \veps_2$ in $Y_1, Y_2$. If we replace  $\veps_1, \veps_2$ by other vectors $\veps_1', \veps_2'$  in $Y_1, Y_2$, then $\veps_1' = \rho_1 \veps_1$, $\veps_2' = \rho_2 \veps_2$ with $\rho_1, \rho_2 \in \tG$, and so
$$ \pi_{\rho_1 \veps_1, \rho_2 \veps_2}  (\lm ) = \pi_{ \veps_1, \veps_2}\bigg(\frac{\rho_1}{\rho_2} \lm\bigg), \qquad F_{\rho_1 \veps_1, \rho_2 \veps_2}  (\lm ) = F_{ \veps_1, \veps_2}\bigg(\frac{\rho_1}{\rho_2} \lm\bigg).$$
But the properties considered in Definition \ref{def:1.1} are independent of the choice of the $\veps_i$.  To avoid distraction by the dependence on the choice of the $\veps_i$, which for most issues is irrelevant, we change  our view point. We regard $Y_1$ and $Y_2$ as \textbf{pointed rays} with base points $\veps_i$, $Y_i = \tG \veps_1$, and write
$$ \pi_{Y_1,Y_2}: [0,\infty] \Onto [Y_1, Y_2], \qquad  F_{Y_1,Y_2} =  F \circ \pi_{Y_1,Y_2},$$
instead of $ \pi_{\veps_1,\veps_2}$. Thus we work with pointed rays, but abusively still call them ``rays''.
\end{comm}
If $F$ is piecewise monomial on $[Y_1,Y_2]$, then we can choose a finite sequence
\begin{equation}\label{eq:b.1}
  0 \leq \al_0 <  \al_1 < \cdots < \al_r \leq \infty
\end{equation}
in $[0, \infty]$ such that
$F(\al_0) = Y_1$, $F(\al_r) = \infty$, and $F|[\al_{s-1}, \al_s]$ is monomial of some degree $j_s$ for $1 \leq s \leq r$.
(Then $F$ takes constant value $Y_1$ on $[0,\al_0]$ and constant value $Y_2$ on $[\al_r,\infty]$.
We could always take $\al_0 = 0$, $\al_r = \infty$. But the present setting has more flexibility.)

We call \eqref{eq:b.1}  again a \textbf{monomial subdivision} of $[\al_0, \al_r]$ for $F$.
Any two such monomial subdivisions can be refined to a third  monomial subdivision.
On the other hand we can reduce \eqref{eq:b.1} to a sequence where adjacent monomial degrees are different, called the \textbf{reduced monomial subdivision} and so have a unique \textbf{reduced monomial degree sequence} for~ $F$ on $[Y_1, Y_2]$.
If \eqref{eq:b.1} is the  reduced monomial subdivision, then $[0,\al_0]$ is the unique maximal interval which $F$ has constant value $Y_1$.
As consequence of Remark \ref{rem:1.3} we obtain
\begin{rem}\label{rem:b.4}
If
$F$ and $G$ are $R$-valued functions on a convex subset $C$ of $\Ray(V)$, and  are pm on a ray-interval $[Y_1, Y_2]$, then the functions $\frac 1 F$, $F+G = \max(F,G)$, $F \wedge G = \min(F,G)$, and $FG$ are again pm on $[Y_1,Y_2],$ and
$$ (F+G)(F \wedge G )= FG .$$
\end{rem}

 \begin{prop}\label{prop:b.5} If
  $F:C \to \olR$ is piecewise monomial or $\lm^i$-monomial
   on $[Y_1,Y_2]$, then $F$ has the same property on every subinterval $[Z_1,Z_2]$  of $[Y_1,Y_2]$.
 \end{prop}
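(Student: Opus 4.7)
The plan is to reduce the claim to a concrete reparametrization: to show that $\pi_{Z_1,Z_2}$ differs from $\pi_{Y_1,Y_2}$ only by composition with a piecewise monomial self-map $\varphi$ of $[0,\infty]$, and then to invoke closure of pm functions under composition with monotone pm maps (established by refining monomial subdivisions to a common one, as in Remark \ref{rem:b.4}).

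First I would fix basepoints $\veps_i \in Y_i$ for the parametrization $\pi_{Y_1,Y_2}$, choose $\mu_1,\mu_2 \in [0,\infty]$ with $\pi_{Y_1,Y_2}(\mu_j)=Z_j$ and $\mu_1 <_{Y_1} \mu_2$, and then pick natural basepoints $\zt_j \in Z_j$: namely $\zt_j := \veps_1 + \mu_j \veps_2$ when $\mu_j \in R \sm \00$, and $\zt_1 := \veps_1$ (resp.\ $\zt_2 := \veps_2$) in the boundary cases $\mu_1=0$ (resp.\ $\mu_2=\infty$). In the generic case a direct computation gives
$$
\zt_1 + \lm \zt_2 = (1+\lm)\veps_1 + (\mu_1 + \lm\mu_2)\veps_2,
$$
and after extracting the (nonzero) scalar $1+\lm$ this yields
$$
\pi_{Z_1,Z_2}(\lm) = \pi_{Y_1,Y_2}\bigl(\varphi(\lm)\bigr), \qquad \varphi(\lm) := \frac{\mu_1 + \lm\mu_2}{1+\lm}.
$$

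Second I would verify that $\varphi \colon [0,\infty]\to[\mu_1,\mu_2]$ is piecewise monomial by explicit case analysis in the max-plus arithmetic of $R$: one finds $\varphi(\lm)=\mu_1$ on $[0,\mu_1/\mu_2]$, $\varphi(\lm)=\mu_2 \lm$ on $[\mu_1/\mu_2,1]$, and $\varphi(\lm)=\mu_2$ on $[1,\infty]$, so $\varphi$ is a monotone increasing pm function whose reduced degree sequence is a subsequence of $(0,1,0)$. The boundary cases $\mu_1=0$ and $\mu_2=\infty$ give analogous pm reparametrizations with fewer pieces (e.g.\ $\varphi(\lm)=\mu_1+\lm$ when $\mu_2=\infty$).

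Third I would conclude by composition. Since $F_{Z_1,Z_2} = F_{Y_1,Y_2} \circ \varphi$, given any monomial subdivision $0=\al_0<\cdots<\al_r=\infty$ of $[0,\infty]$ for $F_{Y_1,Y_2}$, monotonicity of $\varphi$ implies that each preimage $\varphi^{-1}([\al_{k-1},\al_k])$ is a subinterval of $[0,\infty]$; intersecting with the monomial subdivision of $\varphi$ produces a finite common refinement on which $F_{Y_1,Y_2}$ and $\varphi$ are simultaneously monomial. A power of a monomial being again a monomial, $F_{Z_1,Z_2}$ is monomial on each refined piece, hence pm on $[0,\infty]$. The $\lm^i$-monomial case with $i=0$ (i.e.\ $F$ constant on $[Y_1,Y_2]$) is then immediate, since constants restrict to constants.

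The main obstacle I foresee is not conceptual but bookkeeping: (i) treating uniformly the boundary cases $\mu_j\in\{0,\infty\}$ in the reparametrization formula, and (ii) justifying carefully that a monotone pm function composed with a pm function is pm — which reduces to refining two monomial subdivisions to a common one and noting that composition of monomials $\gm \mu^j$ and $\gm' \lm^k$ is the monomial $\gm (\gm')^j \lm^{jk}$.
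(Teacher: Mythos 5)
Your proposal is correct and follows essentially the same route as the paper: the same reparametrization $\pi_{Z_1,Z_2}(\lm)=\pi_{Y_1,Y_2}\bigl(\frac{\mu_1+\lm\mu_2}{1+\lm}\bigr)$, the same three-piece max-plus evaluation of that fraction (constant, degree $1$, constant), and the same conclusion that a monomial piece $\gm\lm^i$ of $F_{Y_1,Y_2}$ pulls back to the monomial $\gm\mu_2^i\lm^i$. The only differences are cosmetic: you phrase the last step as closure of pm functions under composition with a monotone pm map and treat the boundary cases $\mu_j\in\{0,\infty\}$ explicitly, which the paper leaves implicit.
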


\begin{proof} We start with a monomial subdivision \eqref{eq:b.1} of $[Y_1, Y_2]$ for $F$.
  We write

  $$ Z_1 = \ray(\veps_1 + \zt \veps_2), \qquad Z_2 = \ray(\veps_1 + \eta \veps_2), $$
  with $\al_0 \leq \zt < \eta \leq \al_r$, $\veps_1 \in Y_1$, $\veps_2 \in Y_2$.
Given $\mu \in \olR$, we have
  $$\begin{array}{ll}
      \ray\big((\veps_1 + \zt \veps_2)+\mu(\veps_1 + \eta \veps_2)\big) &=
\ray\big((1 + \mu)\veps_1 + (\zt + \mu \eta)\veps_2\big) \\[1mm] &=
\ray\big(\veps_1 + \frac{(\zt + \mu \eta)}{(1 + \mu)}\veps_2\big).
    \end{array}  $$
 Now observe that
 $$ \frac{(\zt + \mu \eta)}{(1 + \mu)} = \left\{
 \begin{array}{ll}
   \zt & \text{if } \mu \leq \frac \zt \eta,  \\[1mm]
   \mu \eta &  \text{if } \frac \zt \eta \leq \mu \leq  1,  \\[1mm]
   \eta &    \text{if }   1  \leq \mu.   \\
 \end{array}
 \right. $$
Thus
 $$ g(\mu) : = F_{Z_1, Z_2} (\mu) = \left\{
 \begin{array}{ll}
   f(\zt) & \text{if } 0 \leq \mu \leq \frac \zt \eta,  \\[1mm]
  f( \mu \eta) &  \text{if } \frac \zt \eta \leq \mu \leq  1,  \\[1mm]
  f( \eta) &    \text{if }   1  \leq \mu \leq \infty.   \\
 \end{array}
 \right. $$
Consequently, using the base points $\veps_1 + \zt \veps_2$, $\veps_1 + \eta \veps_2$ for $Z_1$, $Z_2$, we obtain a monomial subdivision for $F$ on $[Z_1, Z_2]$, starting with $0 < \frac{\zt}{\eta}$ and ending with $1< \infty $ for the parameter ~$\lm$. If $f(\lm) = \gm \lm^i$ for $f = F \circ \pi_{Y_1, Y_2}$ on a subinterval $[u,v]$ of $[\zt, \eta] \cap [\al_{0}, \al_r]$, then
\begin{equation}\label{eq:b.2}
  g(\mu) = \gm \eta^i \mu^i
\end{equation}
on $[u,v]$.
\end{proof}

\begin{thm}\label{thm:1.6}
Assume that $F: [Y_1,Y_2] \to \olR $ is a piecewise monomial function.
\begin{enumerate}\ealph
  \item The image of $F$ is a  closed subinterval $[\rho,\sig]$ of $[0,\infty]$. If $f = F_{Y_1,Y_2}$ and
$$ \al_0  < \al_1 < \al_2 < \cdots < \al_r $$ is a finite sequence such that  $f(\al_0)=Y_1$ and $f(\al_r)=Y_2$ in $[0,\infty]$, and $f$ is monomial on each interval $[\al_{s-1}, \al_s]$, $1 \leq s \leq r$, then
\begin{equation}\label{eq:1.10}
\rho = \min_{0 \leq s \leq r} f(\al_s), \qquad \sig = \max_{0 \leq s \leq r} f(\al_s).
\end{equation}

  \item If $\vrp$ is a piecewise monomial $\olR$-valued function on an interval $[u,v]$ in $[0,\infty]$, which contains $[\rho, \sig]$, then the function $\vrp \circ F: [Y_1,Y_2] \to \olR$ is again piecewise monomial.
\end{enumerate}
\end{thm}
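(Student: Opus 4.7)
The plan is to reduce both parts to the parametric function $f := F_{Y_1, Y_2} : [0, \infty] \to \olR$, exploiting that root-closedness of $R$ makes every nonzero-degree monomial $\gm \lm^j$ a strictly monotone bijection from a closed interval onto its image (surjective in particular), while degree zero yields a constant.

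For part (a), I would start with a monomial subdivision $\al_0 < \al_1 < \cdots < \al_r$ as in the statement. On each piece $[\al_{s-1}, \al_s]$, the monomial $\gm_s \lm^{j_s}$ is either constant (if $j_s = 0$) or strictly monotone; combined with density of the ordering (from root-closedness of $R$), its image is precisely the closed interval with endpoints $f(\al_{s-1})$ and $f(\al_s)$. Hence $\Im(f)$ is a union of such closed intervals. Adjacent pieces $[\al_{s-1}, \al_s]$ and $[\al_s, \al_{s+1}]$ produce closed intervals that share the value $f(\al_s)$, and a short induction on $r$ shows that a chain of closed intervals, consecutively overlapping at shared endpoints, unites to a single closed interval $[\rho, \sig]$ with $\rho = \min_s f(\al_s)$ and $\sig = \max_s f(\al_s)$. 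Since $\pi_{Y_1, Y_2}$ is surjective, $\Im(F) = \Im(f)$, and part (a) follows. Values of $f$ on the (possibly trivial) outer pieces $[0, \al_0]$ and $[\al_r, \infty]$ are constant equal to $f(\al_0)$ and $f(\al_r)$, respectively, so nothing outside $[\rho, \sig]$ is attained.

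For part (b), observe that $(\vrp \circ F)_{Y_1, Y_2} = \vrp \circ f$, so it suffices to show that $\vrp \circ f$ is pm on $[0, \infty]$. I would fix a monomial subdivision $\al_0 < \cdots < \al_r$ for $f$ and a monomial subdivision $\bt_0 < \cdots < \bt_t$ for $\vrp$, where $[\rho, \sig] \subset [\bt_0, \bt_t]$ by hypothesis. On each piece $[\al_{s-1}, \al_s]$ with $f(\lm) = \gm_s \lm^{j_s}$: if $j_s = 0$, then $\vrp \circ f$ is already constant there. Otherwise $f$ is a strictly monotone bijection onto its image, so I would pull back each $\bt_k$ lying in that image to the unique preimage $(\bt_k / \gm_s)^{1/j_s} \in R$ (which exists because $R$ is root closed) and insert these points to refine the subdivision. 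On every resulting sub-piece, $f(\lm)$ lands in a single $[\bt_{k-1}, \bt_k]$ where $\vrp(\mu) = \dl_k \mu^{\ell_k}$, and hence $\vrp(f(\lm)) = \dl_k \gm_s^{\ell_k}\, \lm^{j_s \ell_k}$, a monomial of degree $j_s \ell_k$. This delivers a monomial subdivision for $\vrp \circ f$.

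The main obstacle is the intermediate-value and preimage machinery inside $\olR$: one needs that every monomial $\gm \lm^j$ on a closed interval actually attains every value between its two endpoint values (where root-closedness of $R$ is essential, since the Frobenius maps $\vrp_n$ are bijections precisely on a root-closed $R$), and in part (b) that the breakpoints $(\bt_k / \gm_s)^{1/j_s}$ used to refine the subdivision genuinely belong to $R$. Once density of the ordering and bijectivity of the root maps are secured, both parts reduce to the elementary chain-of-intervals lemma and to carefully merging two monomial subdivisions into a common refinement, in the spirit of Remark \ref{rem:b.4}.
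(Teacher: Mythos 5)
Your proposal is correct and follows essentially the same route as the paper: part (a) via the chain of overlapping closed intervals $I_s=f([\al_{s-1},\al_s])$, and part (b) by refining the subdivision using the bijectivity of each nonconstant monomial piece (guaranteed by root-closedness) to pull the breakpoints of $\vrp$ back into the domain — the paper phrases this by inserting points $u_j$ in $[\rho,\sig]$ and taking their unique preimages $\bt_{s,t}$, which is the same step. No substantive difference.
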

\begin{proof} (a): The function $f =F_{Y_1,Y_2}$ maps each interval $[\al_{s-1}, \al_s]$ onto a closed interval $I_s = f([\al_{s-1}, \al_s])$, perhaps degenerated to a one--point set, and $I_{s-1} \cap I_s \neq \emptyset$ for $s > 0 $. Thus
$$f\big([\al_{s-1}, \al_s]\big) = \bigcup_{s=1}^n I_s$$
  is again a closed interval. More precisely, $I_s = [f(\al_{s-1}), f(\al_s)]$ if $f(\al_{s-1}) < f(\al_s)$,
  $I_s = [f(\al_{s}), f(\al_{s-1})]$ if $f(\al_{s}) < f(\al_{s-1})$, and $I_s = \{f(\al_s)\}$ if $f(\al_{s-1}) = f(\al_s)$. This implies  claim~ \eqref{eq:1.10}.

\pSkip
  \noindent (b): There is nothing to do if $\rho = \sig$. Assume that $\rho < \sig$. Replacing the pm function~ $\vrp$ by its restriction to $[\rho,\sig]$, we also assume that $[u,v] = [\rho, \sig].$ We now refine the set $\{ f(\al_0), \dots, f(\al_r)\} $ arranged to an ascending sequence
  $$ \rho = u_0 < u_1 < \dots < u_m = \sig$$
  by adding finitely many points in $[\rho,\sig]$ such that $\vrp$ is monomial on each interval $[u_{j-1}, u_j]$, $1 \leq j \leq m$. We prove that $\vrp \circ f$ is piecewise monomial on each interval $[\al_{s-1}, \al_s]$ and then will be done.

  If $f(\al_{s-1}) = f( \al_s)$, then $\vrp \circ f$ is constant on $[\al_{s-1}, \al_s]$. Assume that
  $f(\al_{s-1}) < f( \al_s)$. If there is no point $u_j$ in $]f(\al_{s-1}), f(\al_s)[$\;, then $\vrp \circ f $ is monomial on $[\al_{s-1}, \al_s]$. Otherwise we have a finite subsequence
  $$ f(\al_{s-1}) = u_k < u_{k+1} <  \cdots <  u_\ell = f(\al_s) $$ of $(u_0, \dots, u_m)$. The function $f$ maps $[\al_{s-1}, \al_s]$ bijectively onto $[f(\al_{s-1}), f(\al_s)]$, since $f$ is $\lm^i$-monomial on $[\al_{s-1}, \al_s]$ with $i > 0$ and $R^{\frac 1i} = R$. Thus there is a unique sequence
  $$ \al_{s-1} = \bt_{s,k} < \bt_{s,k+1} < \cdots < \bt_{s,\ell} =  \al_s$$ with $f(\bt_{s,t}) = \mu_t$ for $k \leq t \leq \ell$.  Now $\vrp \circ f$ is a monomial on each interval $[\bt_{s,t-1}, \bt_{s,t}]$, $k+1 \leq t \leq \ell$, since $f$ is monomial on this interval and $\vrp$ is monomial on its image $[u_{t-1}, u_t]$ under $f$. Thus $\vrp \circ f$ is pm on $[\al_{s-1}, \al_s]$.  If $f(\al_{s}) < f (\al_{s-1})$, we argue with
  $$ f(\al_{s}) = u_k < u_{k+1} <  \cdots <  u_\ell = f(\al_{s-1}) $$ and obtain again that $\vrp \circ f$ is pm on $[\al_{s-1}, \al_s]$.
\end{proof}

Our next goal is to ``compare'' $R$-valued pm functions on parts of a closed interval.

\begin{defn}\label{def:1.6} We say that two $R$-valued  functions $F$ and $G$ on a subset $D$ of $\Ray(V)$ are \textbf{comparable}, if $F \leq G$ or $F \geq G$ (everywhere) on $D$, and that $F,G$ are \textbf{strictly comparable}, if $F < G$, $F =G$, or $F >G$ (everywhere) on $D$. More generally we say that $R$-valued functions $F_1, \dots, F_r$ on $D$ are comparable (resp. strictly comparable), if any two of the $F_i$'s are comparable (resp. strictly comparable).
\end{defn}

\begin{thm}\label{thm:1.7}
Assume that $Y_1 \neq Y_2$ are rays in $V$,  that $F:[Y_1,Y_2] \to R $ is  a $\lm^i$-monomial function,  and that $G:[Y_1,Y_2] \to R $ is a  $\lm^j$-monomial function ($i,j \in \Z$).
  \begin{enumerate} \ealph
    \item If $i=j$, then $F$ and $G$ are strictly comparable.
    \item If $F$ and $G$ are comparable, but not strictly comparable, and $F(Y_1) \neq  G(Y_1)$,
    $F(Y_2) \neq  G(Y_2)$, then there is a   unique ray $Z$ in $]Y_1, Y_2]$ such that $F < G$ everywhere on $[Y_1, Z[$ and $F = G$ on $[Z, Y_2]$.
    \item  If $F$ and $G$ are not comparable, there is  a  unique ray $Z$ in $]Y_1, Y_2[$ such that $F$ and $ G$  are strictly comparable on  $[Y_1, Z[$ and on $]Z, Y_2]$.
  \end{enumerate}
\end{thm}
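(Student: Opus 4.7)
The plan is to reduce the theorem to a comparison of ordinary monomial functions via the parametrization of $[Y_1, Y_2]$. Choose base points $\veps_s \in Y_s$ for $s = 1, 2$ and set $\pi := \pi_{Y_1, Y_2}: [0, \infty] \Onto [Y_1, Y_2]$, as in Comment~\ref{com:b.3}. Writing $f := F \circ \pi$ and $g := G \circ \pi$, the monomial hypotheses give $f(\lm) = \gm \lm^i$ and $g(\lm) = \dl \lm^j$ with $\gm, \dl \in R \sm \00$. Since $\pi$ is surjective and order-preserving, comparisons of $F$ and $G$ on $[Y_1, Y_2]$ correspond pointwise to comparisons of $f$ and $g$ on $[0, \infty]$.

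For part (a), the assumption $i = j$ makes the ratio $f(\lm)/g(\lm) = \gm/\dl$ a constant in $R$. Since $R$ is a totally ordered semifield, exactly one of $\gm < \dl$, $\gm = \dl$, $\gm > \dl$ holds, and the corresponding strict relation between $f$ and $g$ is uniform on $]0, \infty[$ where $\lm^i \in \tG$; this extends to the parameter endpoints by direct substitution into the monomial formula, yielding strict comparability of $F$ and $G$ on $[Y_1, Y_2]$.

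For parts (b) and (c), assume $i \ne j$ and, after possibly swapping $F$ and $G$, assume $i < j$. The central computation is to solve $\gm \lm^i = \dl \lm^j$ on $\tG$: cancelling $\lm^i$ gives $\lm^{j-i} = \gm/\dl$, and by root-closure of $R$ together with the fact that $\lm \mapsto \lm^{j-i}$ is an order-preserving automorphism of $\tG$, there is a unique solution $\lm_0 := (\gm/\dl)^{1/(j-i)} \in \tG$, lying strictly between $0$ and $\infty$. The same monotonicity yields $f > g$ on $[0, \lm_0[$ and $f < g$ on $]\lm_0, \infty]$, with $f = g$ only at $\lm_0$. Setting $Z := \pi(\lm_0) \in \;]Y_1, Y_2[$ handles part (c) at once: the opposite orderings on the two sides of $Z$ show $F$ and $G$ are not comparable, strict comparability holds on $[Y_1, Z[$ and on $]Z, Y_2]$ in opposite senses, and uniqueness of $Z$ follows from that of $\lm_0$.

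Part (b) is the subtle case. Since the previous analysis shows that under $i \ne j$ the sign of $f - g$ must change on $[0, \infty]$, the hypothesis that $F$ and $G$ are comparable (say $F \le G$) but not strictly can only arise through the non-injectivity of $\pi$: the ray-subset on which $F = G$ must correspond, via $\pi$, to a closed sub-interval of $[0, \infty]$ whose image is a non-degenerate ray-interval $[Z, Y_2]$. The boundary hypotheses $F(Y_s) \ne G(Y_s)$ for $s = 1, 2$ eliminate the possibility that this equality is merely boundary-induced at $\lm = 0$ or $\lm = \infty$. The main obstacle is to use the fibre analysis of \S\ref{sec:1} (notably Proposition~\ref{prop:1.5} and the uniqueness criteria of Theorem~\ref{thm:1.7.a}) to pinpoint $Z$ as the unique ray at which the transition from strict $F < G$ to $F = G$ occurs, and to verify the required interval structure on each side of $Z$.
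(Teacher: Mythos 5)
Your reduction to the parameter level and your handling of parts (a) and (c) coincide with the paper's own route: the equivalence $f(\lm)\ \Box\ g(\lm) \Leftrightarrow \lm^{i-j}\ \Box\ \frac{\dl}{\gm}$ on $]0,\infty[$ (the paper's \eqref{eq:1.11}), root closure to obtain the unique crossing parameter $\lm_0$, and $Z=\pi(\lm_0)$ as in \eqref{eq:1.13}. Two points you leave implicit in (c) but which are easily repaired: you should note why $Z\neq Y_1,Y_2$, and why pointwise comparisons transfer from parameters to rays despite the non-injectivity of $\pi$ (this is immediate because $\pi$ is increasing, so $\pi^{-1}([Y_1,Z[)\subset[0,\lm_0[$, and because $f=F\circ\pi$, $g=G\circ\pi$ are constant on fibres).

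Part (b), however, is not proved: you end by naming ``the main obstacle'' instead of overcoming it, and the route you sketch would not work. You claim that comparability without strict comparability ``can only arise through the non-injectivity of $\pi$,'' with the equality locus coming from a closed parameter subinterval on which $f=g$. But your own computation shows that $f=g$ holds at exactly one parameter $\lm_0\in\;]0,\infty[$ (since $\lm\mapsto\lm^{j-i}$ is injective on $\tG$), and since $f$ and $g$ are constant on fibres of $\pi$, non-injectivity can neither create further equalities nor suppress the sign that holds on $]\lm_0,\infty[$; so the picture you describe is inconsistent with your own analysis. Moreover Proposition~\ref{prop:1.5} and Theorem~\ref{thm:1.7.a} are the wrong tools here: they concern the particular functions $f_w$ and uniqueness of parameters, not arbitrary monomial $F,G$. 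The paper settles (b) directly from the same equivalence \eqref{eq:1.11}: if the sign $>$ never occurs, then only $<$ and $=$ occur among the relations $\lm^{i-j}\ \Box\ \frac{\dl}{\gm}$, and strict monotonicity of $\lm\mapsto\lm^{i-j}$ places the transition at the same ray $Z$ of \eqref{eq:1.13}, giving $F<G$ on $[Y_1,Z[$ and $F=G$ from $Z$ onward. You need to supply this (short) argument; as written, part (b) is a description of what remains to be done rather than a proof.
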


\begin{proof}
  Let  $f =F_{Y_1,Y_2}$, $g =G_{Y_1,Y_2}$. We work with monomial subdivisions for $F$ and $G$ on the whole set $[0, \infty]$. For $\lm \in [0,\infty]$ we have
  $f(\lm) = \gm \lm^i$, $g(\lm) = \dl \lm^j$ with fixed $\gm, \dl \in \tG$.

 If $i =j$, then trivially $F < G$ if $\gm < \dl$, $F = G$ if $\gm = \dl$, and  $F >G $ if $\gm > \dl$.
   Assume now that $i \neq j$.
For each sign $\Box \in \{  <, = , >\}$ and $\lm\in \; ]0,\infty[$ we have
\begin{equation}\label{eq:1.11}
  f(\lm) \ds\Box g(\lm) \dss \Leftrightarrow  \gm \lm^i\ds \Box \dl  \lm^j
  \dss \Leftrightarrow  \lm^{i-j}\ds \Box \frac \dl \gm .
\end{equation}
If $F$ and $G$ are not  comparable, then all three signs $< , = , > $  occur here, and we infer from ~ \eqref{eq:1.11} that $F$ and $G$ are strictly comparable on $[Y_1,Z [ $ and $]Z, Y_2]$ with
\begin{equation}\label{eq:q.1.12}
Z = \ray\bigg( \veps_1 + \sqrt[k]{\frac \dl \gm}  \veps_2\bigg)
=\ray\big( \sqrt[k]{\gm }\veps_1 +\sqrt[k]{\dl }  \veps_2\big)
\end{equation}
in case $i > j $
\begin{equation}\label{eq:1.13}
Z = \ray\bigg( \veps_1 + \sqrt[k]{\frac \gm \dl}  \veps_2\bigg)
=\ray\big( \sqrt[k]{\dl }\veps_1 +\sqrt[k]{\gm }  \veps_2\big)
\end{equation}
in case $i < j $, where $k = |i-j|$ in both cases.

If $F \leq G$, but not $F < G$, then on \eqref{eq:1.11} the signs $<$ and $=$ occur, but not $>$, and we infer that $F < G$ on $[Y_1, Z[$, $F = G$ on $[ Z, Y_2 [$ where $Z$ is again the ray in \eqref{eq:1.13}.
\end{proof}

\begin{rem}\label{rem:1.8}
  If $F$ and $G$ are comparable and $F<G$ everywhere on $[Y_1,Z[$, $F =G $
on $[Z,Y_2]$, as described in Theorem \ref{thm:1.7}.(b), then $[Y_1,Z[ $ is \textbf{not} a closed interval. In other words $[Y_1,Z'] \neq [Y_1,Z[$ for every ray $Z'$ in $[Y_1,Z[$. Likewise, for the ray $Z$ appearing in Theorem~ \ref{thm:1.7}.(c) neither $[Y_1,Z[$ nor $]Z,Y_2]$ is a  closed interval. All this is clear from ~ \eqref{eq:1.11} in the proof of Theorem~ \ref{thm:1.7}, since the function $\lm \mapsto \lm^{i-j}$ on $]0,\infty[$ is strictly monomial.

\end{rem}
If $F$ and $G$ are $R$-valued functions on a set $D \subset \Ray(V)$, we define the functions $\max(F,G)$ and
$\min(F,G)$ on $D$ in the obvious way,
$$\max:x \mapsto \max(F(x), G(x)), \qquad \min: x \mapsto \min(F(x), G(x)).$$ Note that
$\max(F,G) = F +G$ and that
\begin{equation}\label{eq:1.14}
  (F+G) \cdot \min(F,G) = FG.
\end{equation}

\begin{thm}\label{thm:1.9}
  Assume that  $F$ and $G$ are $R$-valued pm functions on $[Y_1,Y_2]$
   and that $R$ is root closed. Then the functions $F + G = \max(F,G)$ and $\min(F,G)$ are again pm on $[Y_1,Y_2]$.

\end{thm}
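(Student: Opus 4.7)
The plan is to pass to the $[0,\infty]$ picture via the parametrization $\pi_{Y_1,Y_2}$ and then reduce everything to case analysis on a single common monomial subdivision. After fixing base points $\veps_1 \in Y_1$, $\veps_2 \in Y_2$, set $f := F \circ \pi_{Y_1,Y_2}$ and $g := G \circ \pi_{Y_1,Y_2}$, which are $R$-valued pm functions on $[0,\infty]$. By the refinement argument in Remark~\ref{rem:1.3}, a common monomial subdivision $0 \leq \al_0 < \al_1 < \cdots < \al_r \leq \infty$ exists such that both $f$ and $g$ are monomial on each piece $[\al_{s-1},\al_s]$, say $f(\lm) = \gm_s \lm^{i_s}$ and $g(\lm) = \dl_s \lm^{j_s}$ with $\gm_s, \dl_s \in \tG$. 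Letting $Z_s := \ray(\veps_1 + \al_s \veps_2)$, this amounts to a subdivision of $[Y_1,Y_2]$ on whose pieces $F$ and $G$ are both monomial.

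Next I examine each subinterval $[Z_{s-1},Z_s]$ and apply Theorem~\ref{thm:1.7}. One of three cases occurs: in case (a), $F$ and $G$ are strictly comparable on $[Z_{s-1},Z_s]$, so $\max(F,G)$ and $\min(F,G)$ each coincide with one of $F,G$ throughout and are again monomial; in case (b), $F$ and $G$ are comparable but not strictly, and there is a unique ray $Z \in \, ]Z_{s-1},Z_s]$ after which the two functions agree, so $\max(F,G)$ is the larger of the two on the whole piece (hence monomial) and $\min(F,G)$ is the smaller (also monomial); in case (c), $F$ and $G$ are not comparable, and Theorem~\ref{thm:1.7}(c) produces a unique ray $Z \in \, ]Z_{s-1},Z_s[$ splitting the piece into two subpieces on which $F$ and $G$ are strictly comparable with opposite signs. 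In case (c) I insert $Z$ into the subdivision; on each of the two new pieces $\max(F,G)$ and $\min(F,G)$ agree pointwise with one of $F,G$, so they are monomial there.

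After performing this insertion wherever case (c) occurs, I obtain a refined monomial subdivision of $[Y_1,Y_2]$ witnessing that both $\max(F,G)$ and $\min(F,G)$ are pm. The only point where root closedness of $R$ is genuinely used is step two, case (c): by the formulas \eqref{eq:q.1.12} and \eqref{eq:1.13} inside the proof of Theorem~\ref{thm:1.7}, the crossing ray takes the form $Z = \ray(\sqrt[k]{\gm_s}\,\veps_1 + \sqrt[k]{\dl_s}\,\veps_2)$ with $k = |i_s - j_s|$, so the required $k$-th roots must lie in $R$. I do not anticipate a serious obstacle: the main care lies in correctly invoking Theorem~\ref{thm:1.7} piecewise (handling the three comparison cases uniformly for both $\max$ and $\min$) and checking that only finitely many new breakpoints are introduced, which is automatic since at most one is added per piece. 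The identity \eqref{eq:1.14}, $(F+G)\cdot\min(F,G) = FG$, offers an alternative route to $\min(F,G)$ from $\max(F,G)$ and the reciprocal (using Remark~\ref{rem:b.4}), but the direct refinement above treats both operations on equal footing and is cleaner.
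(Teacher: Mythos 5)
Your proposal is correct and follows essentially the same route as the paper: pass to a common monomial subdivision of $[0,\infty]$, invoke Theorem~\ref{thm:1.7} on each piece to split it (at most once, at the crossing ray) into subintervals on which $F$ and $G$ are comparable, and observe that $\max(F,G)$ and $\min(F,G)$ then coincide with one of $F,G$ on each subinterval. Your explicit remark on where root closedness enters (the $k$-th roots defining the crossing ray in case (c)) is a useful clarification that the paper leaves implicit.
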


\begin{proof}
   Let  $f =F_{Y_1,Y_2}$, $g =G_{Y_1,Y_2}$.  We have a sequence $  \al_0 < \al_1 < \cdots < \al_r $ in $[0,\infty]$ with $f(\al_0) = g(\al_0) =Y_1$, $f(\al_r) = g(\al_r) =Y_2$, such that both $f$ and $g$ are monomial on each interval $[\al_{s-1}, \al_s]$.
   Let $Z_s := \ray(\veps_1 + \al_s \veps_2)$. By Theorem \ref{thm:1.7} we have rays $T_s$ in $[Z_{s-1},Z_s]$ such that $F$ and $G$ are comparable on $[Z_{s-1},T_s]$ and on $[T_s, Z_s]$, $1 \leq s \leq r$, and so  either $F +G = F $, $\min(F,G) = G$, or $F+G = G$, $\min(F,G) = F$ on each of these intervals.
\end{proof}

\begin{cor}\label{cor:1.10}
  Assume that $R$ is root closed, and that $F_1,\dots, F_r$ are piecewise monomial functions on a closed interval $[Y_1, Y_2]$. Then there is a finite sequence
  $$ Y_1 = Z_0 <_{Y_1} Z_1 <_{Y_1} \cdots <_{Y_1} Z_r = Y_2$$ in $[Y_1, Y_2]$ such that $F_1,\dots, F_r$ are strictly comparable on each open interval $]Z_{i-1}, Z_i[ $, $ 1\leq i \leq r$.
\end{cor}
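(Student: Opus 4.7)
My plan is to reduce to the pairwise case handled by Theorem \ref{thm:1.7}, first refining to a common monomial subdivision of $[Y_1,Y_2]$ for all of $F_1,\dots,F_r$, and then inserting the crossing points produced by Theorem \ref{thm:1.7} for each pair.

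Since each $F_i$ is pm on $[Y_1,Y_2]$, I first pass to a common monomial refinement: any two monomial subdivisions admit a common refinement (an iterated application of Remark \ref{rem:b.4}), so iterating over $i = 1,\dots,r$ yields a single monomial subdivision $Y_1 = W_0 <_{Y_1} W_1 <_{Y_1} \cdots <_{Y_1} W_m = Y_2$ on which every $F_i$ is monomial on every closed subinterval $[W_{s-1}, W_s]$.

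Next, for each such subinterval $[W_{s-1}, W_s]$ and each pair $i < j$, the functions $F_i$ and $F_j$ are monomial of some degrees $a,b \in \Z$ there, so Theorem \ref{thm:1.7} applies. If $a = b$, or if $F_i$ and $F_j$ happen to be strictly comparable on $[W_{s-1}, W_s]$, no crossing is needed. Otherwise parts (b) and (c) of Theorem \ref{thm:1.7} furnish a single ray $Z_{ij,s} \in [W_{s-1}, W_s]$ such that $F_i$ and $F_j$ are strictly comparable on each of $[W_{s-1}, Z_{ij,s}[$ and $]Z_{ij,s}, W_s]$. The edge hypothesis in Theorem \ref{thm:1.7}(b) is harmless: writing $F_i = \gm \lm^a$ and $F_j = \dl \lm^b$ on the monomial piece, the equation $\gm \lm^a = \dl \lm^b$ has a unique root in $R$ (using that $R$ is root closed) which is either inside the open piece, producing the crossing $Z_{ij,s}$, or outside, in which case strict comparability holds throughout.

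Finally, form the finite set $S$ consisting of all $W_s$ together with all crossings $Z_{ij,s}$, and arrange it in increasing order as $Y_1 = Z_0 <_{Y_1} Z_1 <_{Y_1} \cdots <_{Y_1} Z_N = Y_2$. Then any open subinterval $]Z_{k-1}, Z_k[$ lies entirely inside one monomial piece $[W_{s-1}, W_s]$ and avoids every crossing $Z_{ij,s}$; hence every pair $(F_i, F_j)$ is strictly comparable on $]Z_{k-1}, Z_k[$, so by Definition \ref{def:1.6} the whole family $F_1, \dots, F_r$ is strictly comparable there. The main obstacle is bookkeeping rather than conceptual: one must ensure that no crossing ray sneaks inside an open piece of the final subdivision, which is handled automatically by inserting every $Z_{ij,s}$ into $S$.
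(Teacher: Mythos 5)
Your proof is correct and follows essentially the same route as the paper: pass to a common monomial subdivision, apply Theorem \ref{thm:1.7} pairwise on each monomial piece to locate the (at most one) crossing ray per pair per piece, and insert these crossings into the subdivision. The only difference is bookkeeping — you treat all $\binom{r}{2}$ pairs at once, while the paper settles $r=2$ (via the argument of Theorem \ref{thm:1.9}) and then invokes an easy induction on $r$.
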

\begin{proof} The case of $n=2$ is obtained from
  Theorem \ref{thm:1.7} by arguing as in the proof of Theorem ~\ref{thm:1.9}. The case of $r > 2$ follows by an easy induction.
\end{proof}

\section{Partitions of $\Ray(V)$ into convex sets by linear combinations of CS-functions}\label{sec:2}

In \cite[\S4 and \S5]{CSFunctions} we introduced a partition of $\Ray(V)$ into convex sets according to the profiles of the CS-functions $\CS(W,-)$ on  a fixed closed interval $\overrightarrow{[Y_1,Y_2]}$. We elaborate   this study by constructing much more general partitions of $\Ray(V)$ into convex sets, where the notion of ``basic types'', ``relaxations'', and ``composed types'' still make sense.  As before we assume that $R$ is root closed, $R = \tlR$, and that the quadratic from $q$ on the $R$-module $V$ is anisotropic.

\begin{defn}\label{def:2.1} Given a finite set of rays
$S = \{Y_1, \dots, Y_n \}$ in $V$, we call an $R$-valued function $f$ on $\Ray(V)$ \textbf{$S$-basic}, if $f$ is a linear combination of the CS-functions $\CS(Y_1,-), \dots,$ $ \CS(Y_n,-) $,
\begin{equation}\label{eq:2.1}
  f = \sum_{j= 1}^{n} \gm_j \CS(Y_j,-)
\end{equation}
with $\gm_j \in R = eR$.
 \end{defn}
 We pick a finite set $\mfB = \{ f_1, \dots, f_m\}$ of $S$-basic functions and study the partition
 of $\Ray(V)$ into nonempty sets
\begin{equation}\label{eq:2.2}
\bigcap_{1\leq i < j \leq m} \{ X \in \Ray(V) \ds | f_i(X) \ds {\Box_{i,j}} f_j(X) \}
\end{equation}
with signs $\Box_{i,j}$ in $\{ <, =, > \} $, called the \textbf{$\mfB$-partition} of $\Ray(V)$.

The presciption of the set $S$ is just a convention to fix ideas. Any finite set $\mfB$ in the $R$-module
$\sum_Y R \cdot \CS(Y,-)$ with $Y$ running through $\Ray(V)$ appears in this way.

\begin{defn}\label{def:2.3}
A \textbf{basic type} $T$ for a family $\mfB$, or $\mfB$-type  for short, is a conjunction
\begin{equation}\label{eq:2.3}
T = \bigwedge_{1\leq k < \ell \leq m}  f_k(X) \ds {\Box_{k,\ell}} f_\ell(X)
\end{equation}
where $X$ is a formal variable for rays in $V$, with signs $\Box_{k,\ell}$ in $\{ <, =, > \} $, which is not contradictory, i.e., can be satisfied by some ray in $V$.

A \textbf{relaxation}  $U$ of $T$ is a formula where some of the signs $<, >$ appearing in \eqref{eq:2.3} are replaced by $\leq, \geq$ respectively.  Then
\begin{equation}\label{eq:2.4}
Y = T_1 \vee \cdots \vee T_r
\end{equation}
with unique basic types $T_1, \dots, T_r$, up to permutations, called the \textbf{components} of $U$, $T$ being one of them.
Finally, we call a basic type $T'$, which is a component of some relaxation of $T$, a \textbf{type derived from} $T$, or a \textbf{derivate of} $T$.
\end{defn}

In parallel,  for each basic $\mfB$-type we introduce the  set
\begin{equation}\label{eq:2.5}
\{ T \} := \{ X \ds | X \models T\},
\end{equation}
by which we mean the subset of all rays in $V$ satisfying the formula $T$,
and  for a relaxation $U$ of $\{ T \}$ the set
\begin{equation}\label{eq:2.6}
\{ U \} := \{ X \ds | X \models U \},
\end{equation}
called respectively  the \textbf{stratum} of $T$ and the \textbf{relaxation set} of $U$.  (The word ``stratum'' will be justified later.)

Note that, if $U = T_1 \vee \cdots \vee T_r$
with basic types $T_1, \dots, T_r$, then
\begin{equation}\label{eq:2.7}
  \{ U \}  =\{ T_1 \} \cup \cdots \cup \{ T_r \}.
\end{equation}
The strata $\{ T_i \}$, with $T_i$ a component of some relaxation $U$ of $T$, are said to be \textbf{derived} from $\{ T \}$, or \textbf{derivates} of $\{ T \}$.

\begin{thm}\label{thm:2.3} If $T$ is a basic $\mfB$-type and $U$ is a relaxation of $T$, then both $\{ T \}$ and $\{ U \}$ are convex subsets of $\Ray(V)$.
\end{thm}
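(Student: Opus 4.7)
My plan is to establish convexity of $\{U\}$ for an arbitrary relaxation $U$; the claim for $\{T\}$ then follows since a basic type is the trivial relaxation of itself. I would take two rays $X_1, X_2 \in \{U\}$ with $X_1 \neq X_2$ and an arbitrary ray $Z \in [X_1, X_2]$, aiming to verify $f_k(Z) \Box_{k,\ell} f_\ell(Z)$ for every pair $(k,\ell)$.

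Each basic function $f_i = \sum_{j=1}^n \gm_{ij}\CS(Y_j, -)$ is a finite $R$-linear combination of CS-functions, and each $\CS(Y_j,-)$ is piecewise monomial on $[X_1, X_2]$ by Example \ref{exmp:b.2}. Since $R$ is root-closed, iterated application of Theorem \ref{thm:1.9} yields that every $f_i$ is pm on $[X_1, X_2]$. Applying Corollary \ref{cor:1.10} to $\{f_1, \ldots, f_m\}$, I obtain a subdivision
\[
X_1 = Z_0 <_{X_1} Z_1 <_{X_1} \cdots <_{X_1} Z_r = X_2
\]
on which $f_1, \ldots, f_m$ are pairwise strictly comparable on each open subinterval $]Z_{s-1}, Z_s[$; let me write $\square_{k,\ell}^{(s)} \in \{<,=,>\}$ for the strict sign between $f_k$ and $f_\ell$ there. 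Since the transitions furnished by Theorem \ref{thm:1.7} occur at interior rays of their monomial pieces, the sign of $f_k(X_1)$ versus $f_\ell(X_1)$ coincides with $\square_{k,\ell}^{(1)}$, and the sign at $X_2$ coincides with $\square_{k,\ell}^{(r)}$; the hypothesis $X_1, X_2 \in \{U\}$ then forces both to be compatible with $\Box_{k,\ell}$.

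The crux is to verify that the interior signs $\square_{k,\ell}^{(s)}$ for $1 < s < r$ are also compatible with $\Box_{k,\ell}$. Once this is established, on each open piece the required sign holds strictly; at a subdivision point $Z_s$ a transitioning pair attains equality (compatible with any non-strict $\Box$) while non-transitioning pairs retain the strict sign from the adjacent open piece, so $Z \in \{U\}$ regardless of which piece contains $Z$. I expect this interior consistency to be the main obstacle, since for generic pm functions the set $\{f < g\}$ need not be convex (a peaked pm function may cross another twice), and pair-by-pair reasoning therefore fails. I would carry out this step by induction on $r$, exploiting the simultaneous constraints across all pairs in $\mfB$: at each transition $Z_s$ at least one pair attains equality, and the rigidity of Theorem \ref{thm:1.7} together with the coherence of the monomial data at shared endpoints of adjacent pieces restricts how the sign tuple can evolve. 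The hypothesis that both $X_1$ and $X_2$ satisfy $U$ then confines the sign tuple to the $U$-compatible region throughout the whole subdivision; a transition leaving this region at some $Z_s$ could not be undone by later transitions without contradicting the prescribed sign tuple at $X_2$.
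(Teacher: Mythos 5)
There is a genuine gap, and you have in fact put your finger on it yourself without closing it. The paper's proof is a one-line citation: the statement is ``a direct consequence, in fact a reformulation, of the CS-Convexity Lemma'' (\cite[Lemma~5.4]{CSFunctions}), which asserts that each comparison set $\{X \mid f(X) \,\Box\, g(X)\}$ is convex when $f$ and $g$ are $R$-linear combinations of CS-functions. Theorem~\ref{thm:2.3} then follows because $\{T\}$ and $\{U\}$ are finite intersections of such sets. All of the substance therefore lives in that lemma, which depends on the specific analytic shape of CS-functions restricted to an interval, not merely on their being piecewise monomial.

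Your proposed argument never uses anything beyond piecewise monomiality, and that is fatal. As you yourself observe, for general pm functions the set $\{f < g\}$ on an interval need not be convex --- $f$ can dip below $g$ and rise again. Your escape route, ``exploiting the simultaneous constraints across all pairs in $\mfB$,'' is vacuous in the case $m=2$, where there is exactly one pair and the question reduces to precisely the convexity of a single comparison set; no induction on the subdivision and no appeal to the ``rigidity'' of Theorem~\ref{thm:1.7} (which concerns only \emph{monomial}, not piecewise monomial, functions on a piece) can rule out the double crossing without an input specific to CS-functions. The claim that a sign transition ``could not be undone by later transitions'' is exactly the statement to be proved, and it is false in the generality in which you argue. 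To repair the proof you must either invoke the CS-Convexity Lemma directly, as the paper does, or reprove it --- e.g.\ via the explicit profile analysis of $\CS(W,-)$ on $[Y_1,Y_2]$ from Proposition~\ref{prop:1.5} and \cite[\S4--5]{CSFunctions}, which shows the restrictions have the constrained shapes (constant--varying--constant with controlled monotonicity) that exclude a return crossing.
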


\begin{proof}
  This is a direct consequence, in fact a reformulation, of the CS-Convexity Lemma \cite[~Lemma ~5.4]{CSFunctions}.
\end{proof}

\begin{examp}\label{exmp:2.4}
  Assume that $S = \{ Y_1, Y_2\}$ with $Y_1 \neq Y_2$. We choose  $\mfB$ as follows.
  \begin{enumerate}\ealph
    \item If $\CS(Y_1,Y_2) \leq e$, $\mfB$ consists of 3 functions, $0$, $\CS(Y_1,-)$, $\CS(Y_2,-)$.
    \item If $\CS(Y_1,Y_2) > e$, $\mfB$ consists of 5 functions, $0$, $\CS(Y_1,-)$, $\CS(Y_2,-)$, $\frac{\CS(Y_1,-)}{\CS(Y_1,Y_2)}$, $\frac{\CS(Y_2,-)}{\CS(Y_1,Y_2)}$.
  \end{enumerate}
From the list of basic types $T$ of CS-profiles on $\overrightarrow{[Y_1,Y_2]}$ in \cite[\S4]{CSFunctions} and their relaxations we infer that
$$ \{ T \} = \Loc_T(Y_1,Y_2), \qquad \{ U \} = \Loc_U(Y_1,Y_2)$$
for a basic $\mfB$-type and a relaxation $U$ of $T$ in the present sense, cf. \cite[Definition 5.5]{CSFunctions}. Thus we have obtained the same partition of $\Ray(V)$ into convex sets as in \cite[\S5]{CSFunctions} and the same relaxation sets.
\end{examp}

Given a finite set $\mfB$ of $S$-basic functions on $\Ray(V)$ we now study the restriction of the $\mfB$-partition to a closed interval $[W,W'].$ If $T$ is a basic $\mfB$-type, let
\begin{equation}\label{eq:2.8}
  [W,W']_T := [W,W'] \cap \{ T \}.
\end{equation}
Provided that this set is not empty, it is called the \textbf{trace of $\{ T \}$ on $[W,W']$}. Since $\Ray(V)$ is the disjoint union of all $\mfB$-strata, the set $[W,W']$ is the disjoint union of all these sets $[W,W']_T$. Assume that $W \in \{ T \}$ and $W' \in \{ T'\}$ where  $T \neq T'$, i.e., $[W,W']$ is not contained in one $\mfB$-stratum $\{T\}$. The traces of $\mfB$-strata on $[W,W']$ are pairwise disjoint convex subsets of $[W,W']$. Since $[W,W']$ is totally ordered with respect to $\leq_W$ (recall that  $X \leq_W Y \Leftrightarrow
[W,X] \subset [W,Y]$), it is trivial that the traces of $\mfB$-strata on $[W,W']$ appear in an ordered sequence
\begin{equation}\label{eq:2.9}
  [W,W'] _{T_0} < [W,W'] _{T_1} < \cdots < [W,W'] _{T_s}.
  \end{equation}
with $T_0 = T$ and $T_s = T'$.
 ($<$ means $<_W$.)

 We pose two questions.
\begin{description}\dispace
  \item [Question A] Are the sets $[W,W']_{T_k}$ intervals, if so of which kind?

 \item[Question B]  If $T'$ is derived from $T$ (i.e., a component of the relaxation $U$ of $T$), how do the sequences $(T_0, \dots, T_s)$ change when we vary $W$ in $\{ T\}$ and $W'$ in $\{ T'\}$?
\end{description}

We first study the case that $\mfB = \{ f_1, f_2\} $ consists of only two $S$-basic functions. (As before we assume that $R$ is root closed.) There exist at most three basic types, ($f_1 < f_2$), ($f_1 = f_2$), ($f_1 > f_2$), which we label $T_0, T_1, T_2$.  Let $[W, W']$ be an interval with $f_1(W) < f_2(W)$, $f_1(W') = f_2(W')$ if there are two strata, and  $f_1(W')  > f_2(W')$ if there are three. $\{T_0\}$ is the stratum containing $W$, while $T_1 = \{f_1 = f_2  \}$, resp. $T_2 = \{f_1 > f_2  \}$ is the stratum containing~ $W'$.

\begin{lem}\label{lem:2.5} We have the following facts.
\begin{enumerate} \ealph
  \item Assume that $f_1(W) < f_2(W)$, $f_1(W') = f_2(W')$, and so $U := (f_1 \leq f_2)$ is a relaxation of $T_0 := (f_1 < f_2)$, with one other component $T_1$. Then there is a unique ray $Z$ in $[W,W']$ such that $f_1 < f_2$ on $[W,Z[$\;, $f_1 =f_2$ on $[Z,W']$. The set $[W,Z[$ is not a closed interval. In other words $[W,Z'] \neq [W,Z[$ for any $Z'$ in $[W,Z[$. It may happen that $Z = W'$ and so $[Z,W'] = \{ W'\} .$ \{We here regard a singleton $\{ X \}$ solely as a closed interval, deviating from thee terminology in \cite[Formula (7.5)]{QF1}, where $\{ X \}$ is also counted as an open and half open interval.\}

  \item Assume that $f_1(W) < f_2(W)$, $f_1(W') > f_2(W')$.
   Then there are two rays $Z_1, Z_2$ in $[W,W']$ such that
   $$ [W,W']_{T_0} = [W,Z_1[ , \quad [W,W']_{T_1} = [Z_1, Z_2] , \quad [W,W']_{T_2} = ]Z_2, W'].$$
   Neither $[W,Z_1 [$ nor  $]Z_2, W']$ is a  closed interval. The basic type $T_1 = (f_1 = f_2)$ is a derivate of both $T_0$ and $T_2$, but, of course, neither $T_0$ is a derivate of $T_2$ nor $T_2$ is a derivate of $T_0$.

\end{enumerate}
  \end{lem}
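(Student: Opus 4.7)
The plan is to combine the convexity of $\mfB$-strata and their relaxation sets (\thmref{thm:2.3}) with the piecewise-monomial (pm) behavior of $f_1,f_2$ on $[W,W']$. Each $f_i$ is an $R$-linear combination of CS-functions, and every CS-function restricted to $[W,W']$ is pm by \exampref{exmp:b.2}. By Remark~\ref{rem:b.4} (or \thmref{thm:1.9}) pm functions are closed under $+$ and $\min$, so $f_1,f_2$ are themselves pm on $[W,W']$. Parametrize $[W,W']$ by $\pi = \pi_{W,W'}$ and set $g_i := f_i\circ\pi:[0,\infty]\to R$. Fix a common monomial subdivision $0=\alpha_0<\alpha_1<\cdots<\alpha_r=\infty$ on which $g_i(\lambda)=\gamma_k^{(i)}\lambda^{j_k^{(i)}}$ for $\lambda\in[\alpha_{k-1},\alpha_k]$. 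Because $\lambda\mapsto\lambda^{i_1-i_2}$ is strictly monotone when $i_1\neq i_2$ (using root closure and density of $R$), on each piece the locus $\{g_1=g_2\}$ is either the whole piece, a single point, or empty; hence the $\pi$-preimage of $[W,W']_{T_1}$ is closed in $[0,\infty]$.

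For part (a), both endpoints $W\in\{T_0\}$ and $W'\in\{T_1\}$ lie in the relaxation set $\{U\}$, which is convex by \thmref{thm:2.3}; therefore $[W,W']\subset\{U\}$, i.e.\ $f_1\le f_2$ everywhere on $[W,W']$. The trace $E := [W,W']_{T_1}$ is convex (\thmref{thm:2.3}), closed by the observation above, and contains $W'$; hence $E=[Z,W']$ with $Z$ the $\leq_W$-minimum of $E$. For $X\in[W,Z[$ we have $X\notin E$ and $f_1(X)\le f_2(X)$, forcing $f_1(X)<f_2(X)$, so $[W,Z[\,=[W,W']_{T_0}$. That $[W,Z[$ is not a closed interval follows from density of $\leq_W$ on $[W,W']$ (inherited from density of $R$): for any $Z'\in[W,Z[$ there exists $Z''$ with $Z'<_W Z''<_W Z$, so $[W,Z']\subsetneq[W,Z'']\subset[W,Z[$. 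The degenerate case $Z=W'$, giving $[Z,W']=\{W'\}$, is allowed.

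For part (b), apply the same analysis to $E_<:=[W,W']_{T_0}$, $E_=:=[W,W']_{T_1}$, $E_>:=[W,W']_{T_2}$, each convex by \thmref{thm:2.3} and together partitioning $[W,W']$. Three pairwise disjoint convex subsets of a totally ordered set admit a linear order among themselves; since $W\in E_<$, $W'\in E_>$, this order is forced to be $E_<<_W E_=<_W E_>$. Moreover $E_=\neq\emptyset$: a transition from $g_1<g_2$ to $g_1>g_2$, either within a monomial piece or across a subdivision point $\alpha_k$ (where the $g_i$ take unambiguous values), can occur only through a ray at which $g_1=g_2$, by strict monotonicity of $\lambda^{i_1-i_2}$ when $i_1\neq i_2$. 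Setting $Z_1:=\min_{\leq_W} E_=$ and $Z_2:=\max_{\leq_W} E_=$, both belonging to $E_=$ by its closedness, the three traces take the claimed forms $[W,Z_1[$, $[Z_1,Z_2]$, $]Z_2,W']$; non-closedness of the two half-open ends follows as in part (a). The derivation assertions are formal from Definition~\ref{def:2.3}: $(f_1\leq f_2)$ is a relaxation of $T_0$ with components $T_0,T_1$, and symmetrically $(f_1\geq f_2)$ is a relaxation of $T_2$ with components $T_2,T_1$, so $T_1$ is a derivate of both; whereas any relaxation of $T_0=(f_1<f_2)$ merely weakens $<$ to $\leq$ and hence cannot have $T_2=(f_1>f_2)$ as a component.

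The main technical subtlety is the ``sign change forces equality'' step at piece boundaries of the common monomial subdivision: if $g_1<g_2$ on one piece and $g_1>g_2$ on the neighbouring piece, one must verify that the shared endpoint either already satisfies $g_1=g_2$, or else the prevailing strict inequality extends a little past $\alpha_k$ into the next piece, with the crossing then occurring strictly inside that piece. This is routine given strict monotonicity of $\lambda^{i_1-i_2}$ and the single-valuedness of each $g_i$ at $\alpha_k$, but it is the one step where both the density of $R$ and the pm gluing are genuinely invoked; beyond this, the argument is a clean reduction to \thmref{thm:2.3} and the pm structure.
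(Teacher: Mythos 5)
Your overall strategy is essentially the paper's: use convexity of the relaxation set $\{U\}$ (Theorem~\ref{thm:2.3}) to get $f_1\le f_2$ on all of $[W,W']$, pass to a common monomial subdivision, and locate the crossing inside a single monomial piece where the degrees differ. The ordering of the three traces in (b), the nonemptiness of the equality locus, and the formal derivation statements are all handled correctly, and match what the paper does by quoting Theorem~\ref{thm:1.7} piecewise.

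There is, however, one genuine gap: your justification that $[W,Z[$ (and $]Z_2,W']$) is not a closed interval. You assert that $\le_W$ is dense on $[W,W']$, ``inherited from density of $R$''. That inheritance fails in general, because the parametrization $\pi_{W,W'}:[0,\infty]\to[W,W']$ need not be injective: its fibers are convex subsets of $[0,\infty]$ which can be nontrivial intervals --- this is precisely the phenomenon analyzed in \S\ref{sec:1} (cf.\ Proposition~\ref{prop:1.5} and Theorem~\ref{thm:1.7.a} on uniqueness of parameters). Two adjacent nontrivial fibers yield consecutive rays $Z'<_W Z''$ with nothing strictly between them, and then $[W,Z''[\;=[W,Z']$ \emph{is} a closed interval. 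So non-closedness has to be argued locally at the crossing ray, and the reason it holds there is the one the paper invokes via Theorem~\ref{thm:1.7}.(b),(c) and Remark~\ref{rem:1.8}: on the monomial piece containing the crossing the degrees satisfy $j_1\ne j_2$, so at least one $g_i(\lm)=\gm\lm^{j_i}$ has $j_i\ne 0$ and is strictly monotone; hence $f_i$ separates the rays $\pi(\lm)$ for $\lm$ in a left-neighbourhood of the crossing parameter, and $[W,Z[$ has no $\le_W$-maximum. Your setup already contains the needed ingredient (you observe the crossing can only occur where $j_1\ne j_2$), so the repair is local, but as written the blanket density claim is unproven and false in general.
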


\begin{proof}
  (a): We infer from Theorem \ref{thm:1.7}.(b) and Remark \ref{rem:1.8}, that the claims in (a) hold in the case that $f_1$ and $f_2$ are monomial functions. Otherwise we have a finite sequence
  $$ A = W_0 < W_1< \cdots < W_{s-1}< W_s = W', $$
  where $<$ means $<_W$, such that both $f_1$ and $f_2$ are monomial on each interval $[W_r, W_{r+1}]$, $0 \leq r \leq s-1$. $U := (f_1 \leq f_2)$ is a relaxation of $T_0 = (f_1 < f_2)$, whence  $\{U\}$ is a convex set. It contains $W'$ and so $[W,W'] \subset \{ U\}$. In particular $f_1(W_k) \leq f_2(W_k)$ for $0 \leq k \leq s$. Thus there exists an index $r \leq s-1$ such that $f_1 < f_2$ on $[W,W_r]$,  $f_1 = f_2$ on $[W_{r+1}, W'].$ Since both $f_1, f_2$ are monomial on $[W_r, W_{r+1}]$, there is, again by Theorem ~ \ref{thm:1.7}.(b), a unique ray $Z \in ]W_r, W_{r+1}[$ such that $f_1 < f_2$ on $[W_r, Z[$, $f_1 = f_2$ on $[Z,W_r]$, and further $[W_r,Z' ] \neq [W_r, Z[$ for $W_r \leq Z' \leq Z$ with respect to $\leq_{W_r}$. Since the ordering $\leq_{W}$ restricts to ~$\leq_{W_r}$ on $[W_r,W']$, it follows that $f_1 < f_2$ on $[W,Z[$, $f_1 = f_2$ on $[Z,W']$, and $[W,Z'] \neq [W,Z[ $ for $Z'$ in $[W,Z]$, proving our claims in part (a).

  \pSkip
  (b): In the case, that $f_1, f_2$ are monomial, our claims in (b) are clear from Theorem \ref{thm:1.7}.(c), there with $[Z_1,Z_2] = \{ Z\}$. The general case now follows by arguing as in the proof of part ~(a).
\end{proof}

\begin{thm}\label{thm:2.6} Let $T$ and $T'$ be different basic types for a set $\mfB = \{ f_1, \dots, f_n\} $
  of $S$-basic functions. Assume that there exist rays $W \in \{ T\}$, $W' \in \{ T'\}$ with
  \begin{equation}\label{eq:2.10}
    [W,W'] \subset \{T\} \cup \{ T' \}.
  \end{equation}
  Then there exists a unique ray $Z \in [W,W']$ such that
  \begin{enumerate}\dispace
    \item $[W,W']_{T} = [W,Z[$, $[W,W']_{T'} = [Z,W']$, or
    \item $[W,W']_{T} = [W,Z]$, $[W,W']_{T'} = ]Z,W']$.
  \end{enumerate}
  In case (1) $T'$ is derived from $T$, while in case (2) $T$ is derived from $T'$. In both cases the set $\{ T \} \cup \{ T'\}$ is convex, and so \eqref{eq:2.10} holds for every pair $(W,W')$ with $W \in \{ T \}$,
  $W' \in \{ T'\}$. $U := T \vee T'$ is a relaxation of $T$ in case (1) and a relaxation of $T'$ in case (2).
\end{thm}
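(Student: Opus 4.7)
The strategy is to reduce the multi-function comparison on $[W,W']$ to a pair-by-pair analysis via Lemma~\ref{lem:2.5}. Since each $f_i$ is a linear combination of CS-functions and each CS-function is piecewise monomial on $[W,W']$ (Proposition~\ref{prop:1.5} together with Remark~\ref{rem:b.4}), for every pair $(k,\ell)$ the hypotheses of Lemma~\ref{lem:2.5} are available. Write $P$ for the set of pairs $(k,\ell)$, $1\le k<\ell\le n$, at which the signs of $T$ and $T'$ differ; since $T\ne T'$ we have $P\ne\emptyset$.

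The core consists of two rigidity arguments. First, for every $(k,\ell)\in P$ I rule out an \emph{opposite-strict} pattern (one type has $<$, the other $>$): otherwise Lemma~\ref{lem:2.5}(b) would produce a nondegenerate closed sub-interval of $[W,W']$ on which $f_k=f_\ell$, and this ``$=$'' sign lies in neither $T$ nor $T'$, contradicting \eqref{eq:2.10}. Hence at every $(k,\ell)\in P$ exactly one of $T,T'$ carries a strict sign and the other an equality, and Lemma~\ref{lem:2.5}(a) supplies a unique transition ray $Z_{k,\ell}\in[W,W']$. Second, I rule out a \emph{mixed-direction} pattern: were there $(k_1,\ell_1)\in P$ with the strict sign in $T$ and $(k_2,\ell_2)\in P$ with the strict sign in $T'$, then a direct case analysis on the $\leq_W$-order of $Z_{k_1,\ell_1}$ and $Z_{k_2,\ell_2}$ locates a ray $X\in[W,W']$ whose sign at $(k_1,\ell_1)$ matches $T$ while its sign at $(k_2,\ell_2)$ matches $T'$; such $X$ satisfies neither type, again contradicting \eqref{eq:2.10}. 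This forces the dichotomy: either (case~1) at every pair in $P$ the strict sign belongs to $T$ (so $T'$ is a derivate of $T$, obtained by relaxing those strict signs to $=$), or (case~2) the reverse.

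In case~(1), letting $Z_{\min}$ and $Z_{\max}$ be the $\leq_W$-extremes of $\{Z_{k,\ell}:(k,\ell)\in P\}$, a ray $X\in[W,W']$ satisfies $T$ iff $X<_W Z_{\min}$ and $T'$ iff $X\ge_W Z_{\max}$. Were $Z_{\min}<_W Z_{\max}$, the gap $[Z_{\min},Z_{\max}[$ would contain rays satisfying neither type, contradicting \eqref{eq:2.10}; so $Z_{\min}=Z_{\max}=:Z$, yielding $[W,W']_T=[W,Z[$ and $[W,W']_{T'}=[Z,W']$ with $Z$ unique. Case~(2) is symmetric. For the final convexity assertion, define $U$ as the relaxation of $T$ (in case~(1)) that weakens each strict sign at a pair in $P$ to the matching $\leq$ or $\geq$; by Theorem~\ref{thm:2.3} the set $\{U\}$ is convex and contains both $\{T\}$ and $\{T'\}$. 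Applying the pair-by-pair arguments to any $(\tilde W,\tilde W')\in\{T\}\times\{T'\}$ placed inside the convex set $\{U\}$ precludes any mixed-type component of $U$ from being realized along $[\tilde W,\tilde W']$, and one concludes $\{U\}=\{T\}\cup\{T'\}$, giving convexity and \eqref{eq:2.10} for every such pair.

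\textbf{Main obstacle.} The delicate piece is the final convexity step: extending the rigidity proved for the distinguished pair $(W,W')$ to an arbitrary pair $(\tilde W,\tilde W')\in\{T\}\times\{T'\}$ is not automatic, and the key device for bridging the gap is to first place $[\tilde W,\tilde W']$ inside the a priori convex set $\{U\}$ supplied by Theorem~\ref{thm:2.3}, thereby making Lemma~\ref{lem:2.5} applicable again to collapse the multiple transition points onto a single ray and to rule out mixed components.
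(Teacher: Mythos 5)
Your main line is essentially the paper's argument run in the opposite order. The paper first observes that $[W,W'] = [W,W']_T \,\dot\cup\, [W,W']_{T'}$ is a partition of the totally ordered interval into two convex traces, one containing each endpoint; this alone already forces the existence and uniqueness of the splitting ray $Z$ and the dichotomy (1)/(2), with no pairwise analysis. Only afterwards does it invoke Lemma~\ref{lem:2.5}(b) pairwise, exactly as you do, to exclude opposite strict signs and conclude that $T\vee T'$ is a relaxation. Your route instead manufactures $Z$ from the family of pairwise transition rays $Z_{k,\ell}$, which obliges you to prove two coincidence statements (no mixed directions; $Z_{\min}=Z_{\max}$) that the partition argument yields for free. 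These are provable, but one detail is off: in the mixed-direction exclusion, when $Z_{k_1,\ell_1}\le_W Z_{k_2,\ell_2}$ there is \emph{no} ray that is strict at both pairs; the witness in that subcase is a ray (e.g.\ $Z_{k_1,\ell_1}$ itself) carrying ``$=$'' at both pairs, which likewise satisfies neither type. Moreover, the existence of a ray strictly between two distinct transition rays is not plain density of $R$ but the non-closedness statement of Lemma~\ref{lem:2.5} / Remark~\ref{rem:1.8}.

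The genuine gap is the final convexity step. You take arbitrary $\tilde W\in\{T\}$, $\tilde W'\in\{T'\}$, place $[\tilde W,\tilde W']$ inside $\{U\}$, and propose to ``apply the pair-by-pair arguments'' to preclude mixed components along it. But the decisive part of those arguments --- the collapse of all transition rays onto a single $Z$ --- used hypothesis \eqref{eq:2.10} for the interval under consideration, which is exactly what you are trying to establish for $[\tilde W,\tilde W']$. Membership of $[\tilde W,\tilde W']$ in $\{U\}$ only says that each ray's type is a component of $U$; it does not prevent the transition rays of the several pairs in $P$ from separating on $[\tilde W,\tilde W']$ and thereby realizing a mixed component. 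So this step is circular as written. The paper instead concludes directly: having shown $U:=T\vee T'$ is a relaxation of $T$, it cites Theorem~\ref{thm:2.3} and \eqref{eq:2.7} to get convexity of $\{U\}=\{T\}\cup\{T'\}$ from the CS-Convexity Lemma rather than from renewed interval analysis. To repair your version you must either argue the same way or show separately that the minimal relaxation $U(T,T')$ has no nonempty components other than $T$ and $T'$.
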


\begin{proof} We have the partition
$$ [W,W'] = [W,W']_T \ \ds{\dot \cup} \ [W,W']_{T'}.$$
  Both sets on the right hand side are closed or half open intervals. This forces that there is a ray $Z$ for which (1) or (2) holds. Suppose we are in case (1). Let $i,j \in \{ 1, \dots, n\} $ with $f_i(W) < f_j(W) $. Then $(f_i < f_j)$ occurs in the formula $T$ and so $f_i < f_j$ on $[W,Z[$ .

  If $(f_i > f_j)$ would occur in the formula $T'$, then $f_i > f_j$ in $]Z,W']$, but this contradicts Lemma \ref{lem:2.5}.(b). Thus  $(f_i < f_j)$ or $(f_i = f_j)$ occurs in $T'$. This proves that $U := T \vee T'$ is a relaxation of $T$, whence $T'$ is a derivate of $T$ and $\{ U  \}  = \{ T \} \cup \{ T'\} $ is convex.

 Case (2) is obtained from Case (1) by interchanging $W$ and $W'$, and we are done also in this case.
\end{proof}

\begin{defn}\label{def:2.7} In the situation of Theorem \ref{thm:2.6} we say that the strata $\{ T\}$ and $\{ T' \}$ are \textbf{neighbors} and that $T'$ is a \textbf{direct derivate} of $T$, respectively $T$ is a direct derivate of~ $T'$.
\end{defn}

\begin{cor}\label{cor:2.8} Two different strata $\{ T\}$ and $\{T' \}$ are neighbors iff the set
$\{ T\} \cup \{T' \}$ is convex.

\end{cor}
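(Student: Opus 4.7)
The plan is to show that this is essentially a reformulation of Theorem \ref{thm:2.6} together with the definition of ``neighbors'' in Definition \ref{def:2.7}. The content is very light: one direction is already inside Theorem \ref{thm:2.6}, and the other is an immediate unwinding of convexity.

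For the forward implication ($\Rightarrow$), I would simply invoke Theorem \ref{thm:2.6} directly. By Definition \ref{def:2.7}, $\{T\}$ and $\{T'\}$ being neighbors means that there exist rays $W\in\{T\}$ and $W'\in\{T'\}$ with $[W,W']\subset\{T\}\cup\{T'\}$. The conclusion of Theorem \ref{thm:2.6} then asserts, as the penultimate sentence, that $\{T\}\cup\{T'\}$ is convex.

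For the reverse implication ($\Leftarrow$), assume $\{T\}\cup\{T'\}$ is convex. Because basic $\mfB$-types are by definition non-contradictory (Definition \ref{def:2.3}), both strata are non-empty, so I can choose rays $W\in\{T\}$ and $W'\in\{T'\}$. Convexity of $\{T\}\cup\{T'\}$ then yields $[W,W']\subset\{T\}\cup\{T'\}$, which is precisely the hypothesis of Theorem \ref{thm:2.6}. Hence by Definition \ref{def:2.7}, the strata $\{T\}$ and $\{T'\}$ are neighbors.

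No step here should be a real obstacle; the only thing to check is that both strata are non-empty, which is built into the definition of a basic type. Everything else is a direct citation of the preceding theorem and definition.
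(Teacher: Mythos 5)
Your proof is correct and matches the paper's argument, which simply declares the corollary an immediate consequence of Theorem \ref{thm:2.6}; you have merely made explicit the two directions (the forward one being the convexity conclusion of that theorem, the reverse one being the observation that convexity plus non-emptiness of the strata yields its hypothesis). Nothing further is needed.
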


\begin{proof}
  This is a immediate consequence of Theorem \ref{thm:2.6}.
\end{proof}

\begin{rem}\label{rem:2.9}
  We state some facts about relaxations of a basic $\mfB$-type $T$, which now are evident.
  \begin{enumerate} \ealph
    \item Given a derivate $T'$ of $T$, there is a unique \textbf{minimal relaxation $U$ of $T$ with component $T'$}, in the sense that $\{ U \} \subset \{ U_1\} $ for every other relaxation $U_1$ of $T$  with component $T'$. It is the conjunction of all formulas $(f_i < f_j)$ and $(f_i = f_j)$ which appear in both $T$ and $T'$, and the formulas $(f_i \leq  f_j)$ such that $(f_i < f_j)$ appears  $T$, but $(f_i = f_j) $ appears in $T'$, $(i,j) \in \{ 1, \dots, n\} $. We denote this minimal relaxation $U$ by $U(T,T')$.

    \item If $U_0$  is a relaxation of $T$ with component $T_1$, and $U_1$ is a relaxation of $T_1$ with component $T_2$, then $U_0 \vee U_1$ is a relaxation of $T$ with component $T_2$.
    \item  If $T_1$ is a derivate of $T$ and $T_2$ is a derivate of $T_1$, then $T_2$ is a derivate of $T$ (but we cannot exclude the possibility  that $U(T,T')$ has more components than $T,T_1,T_2$, cf. Remark  \ref{rem:2.16} below).
        \item
         If $T'$ is a direct derivate of $T$, then
        $$ U(T,T') = T \vee T' .$$

  \end{enumerate}
\end{rem}

We aim for a more intuitive description of the restriction of the $\mfB$-partition to a closed interval $[W,W']$ by using  the terminology of neighbors and direct derivates in Definition \ref{def:2.7}.

\begin{convention}\label{conv:2.10}
  In order not to get distracted by formalities, we use the letters $T, T', \dots $ both for basic $\mfB$-types and the associated sets $\{ T \}, \{ T' \}, \dots $\footnote{This is a habit common in model theory, which we adapt, to denote a definable set and a formula defining it in the same way. }, still calling these sets ``strata''.
\end{convention}

We introduce a sequence of ``\textbf{separation rays}'' as follows. If $[W,W']$ is contained in a stratum, there are no separation rays. Assume now that $W \in T$, $W' \in T'$ for different strata $T$ and $T'$. The strata meeting the set $[W,W']$ appear in a sequence
\begin{equation}\label{eq:2.11}
  T= T_0, T_1, \dots, T_s = T'
\end{equation}
with
\begin{equation}\label{eq:2.11}
[W,W'] \cap T_i \ds{\leq_W} [W,W'] \cap T_{i+1}
\end{equation}
for $0 \leq i \leq s$, cf. \eqref{eq:2.8}. \{We now write $[W,W']\cap T_i$ instead of $[W,W']_{T_i}$.\}
In each set $[W,W'] \cap T_i$, $0 \leq i \leq s$, we pick a ray $W_i$, choosing $W_0 = W$, $W_s = W'$.

The strata $T_i$ and $T_{i+1}$ are neighbors, since $[W_{i-1},W_i] \subset T_{i-1} \cap T_i$. By Theorem \ref{thm:2.6} we have the following alternatives for each $i > 0$.
\begin{description}
  \item[Case 1]
  $$ [W_{i-1},W_{i}] \cap T_{i-1} = [W_{i-1},Z_i[, \qquad [W_{i-1},W_{i}] \cap T_i = [Z_i, W_i].$$
  \item[Case 2]
  $$ [W_{i-1},W_{i}] \cap T_{i-1} = [W_{i-1},Z_i], \qquad [W_{i-1},W_{i}] \cap T_i = ]Z_i, W_i].$$
\end{description}
In case 1 $T_i$ is a direct derivate of $T_{i-1}$, while in Case 2 $T_{i-1}$ is a direct derivate of $T_{i}$.

\begin{defn} We call the rays $Z_0,Z_1, \dots, Z_s$ appearing above  the \textbf{separating rays for~ $\mfB$ on $[W,W']$}.
\end{defn}
If $[W,W']$ meets more than two strata ($s \geq 2$), we can locate the separating rays $Z_i$, $0 \leq i \leq s$, without referring to the case distinction above as follows.

\begin{rem}\label{ref:2.12}
  $Z_0,Z_1, \dots, Z_s$ are the unique rays in $[W,W'] $ with
  $$ ]Z_{i-1}, Z_i[ \ds \subset T_i \cap [W, W'] \ds \subset [Z_{i-1}, Z_i]$$
  for $0 < i < s$.
\end{rem}

Let $f_i, f_j \in \mfB$ be given, $1 \leq i,j \leq n$, $i \neq j$, and assume that $[W,W']$ is not contained in a single  stratum. We thus have the sequence
$ T= T_0 , \dots, T_s = T'$ of strata with $W \in T$, $W' \in T'$, and the sequence of separating rays $Z_0, Z_1, \dots, Z_s$, as discussed above. Recall that $f_i \ds{\Box_{i,j,k}} f_j$ on each $T_k$ with a constant sign $\Box_{i,j,k} \in \{ < , = , > \}$.

We are interested in the distribution of these signs $\Box_{i,j,k}$ on $[W,W']$. We distinguish the cases
\begin{enumerate}
 \ealph
  \item $f_i(W) < f_j(W)$, $f_i(W') = f_j(W')$,
  \item $f_i(W) < f_j(W)$, $f_i(W') > f_j(W')$.
\end{enumerate}
The other possibilities of signs between $f_i(W), f_j(W)$ and  $f_i(W'), f_j(W')$ can be reduced to these cases (a), (b) by interchanging $f_i$ and $f_j$ and/or $W$ and $W'$.

\begin{thm}[Sign Changing Theorem]\label{thm:2.13}
$ $
\begin{enumerate} \ealph
  \item  If $f_i(W) < f_j(W)$ and  $f_i(W') = f_j(W')$, then there is an index $k$, $0 < k \leq s$ such that
  \begin{equation*}\label{eq:I}
    \begin{array}{ll}
      f_i < f_j & \text{on } T_0 \cup \cdots \cup T_{k-1},\\[1mm]
      f_i = f_j & \text{on } T_k \cup \cdots \cup T_{s},\\
    \end{array} 
  \end{equation*}
and
$$
    \begin{array}{ll}
      \{ f_i < f_j \} \cap [W,W'] & = [W,Z_{k-1}[,\\[1mm]
      \{ f_i = f_j \} \cap [W,W'] & = [Z_{k}, W' ].\\
\end{array}
$$

    \item  If $f_i(W) < f_j(W)$ and $f_i(W') >  f_j(W')$, then there are indices  $k, \ell$, $0 < k < \ell \leq s$ such that
  \begin{equation*}\label{eq:II}
    \begin{array}{ll}
      f_i < f_j & \text{on } T_0 \cup \cdots \cup T_{k-1},\\[1mm]
      f_i = f_j & \text{on } T_k \cup \cdots \cup T_{\ell -1},\\[1mm]
      f_i > f_j & \text{on } T_\ell \cup \cdots \cup T_{s},\\
    \end{array} 
  \end{equation*}
and
$$
    \begin{array}{ll}
      \{ f_i < f_j \} \cap [W,W'] & = [W,Z_{k-1}[,\\[1mm]
      \{ f_i = f_j \} \cap [W,W'] & = [Z_{k}, Z_\ell ],\\[1mm]
      \{ f_i > f_j \} \cap [W,W'] & = ]Z_{\ell}, W' ].\\
\end{array}
$$
\end{enumerate}
  \end{thm}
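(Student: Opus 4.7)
The plan is to reduce the problem to the two-function analysis of Lemma~\ref{lem:2.5} applied to the pair $\{f_i, f_j\}$, and then to overlay the resulting three-piece decomposition of $[W,W']$ on the sequence of $\mfB$-strata traces. Since the theorem only concerns the sign of $f_i - f_j$, no other function $f_k$ enters the analysis, so the pair may be treated in isolation.

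First, I would verify that Lemma~\ref{lem:2.5} applies to $\{f_i, f_j\}$. Being $S$-basic, both $f_i$ and $f_j$ are linear combinations of CS-functions, hence piecewise monomial on $[W,W']$ by Example~\ref{exmp:b.2} together with Theorem~\ref{thm:1.9}. The proof of Lemma~\ref{lem:2.5}, which uses only piecewise monomiality, then carries over verbatim. In case~(a) it yields a unique ray $\widetilde Z \in [W,W']$ with $f_i < f_j$ on $[W, \widetilde Z[$ and $f_i = f_j$ on $[\widetilde Z, W']$. In case~(b) it yields rays $\widetilde{Z}_1, \widetilde{Z}_2 \in [W,W']$ realizing the pattern $f_i < f_j$ on $[W,\widetilde{Z}_1[$, $f_i = f_j$ on $[\widetilde{Z}_1, \widetilde{Z}_2]$, and $f_i > f_j$ on $]\widetilde{Z}_2, W']$.

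Next, I would invoke the observation made just before the theorem statement: the sign $\Box_{i,j,m}$ of $f_i - f_j$ is constant on each $\mfB$-stratum $T_m$. Consequently, each of $\{f_i < f_j\}$, $\{f_i = f_j\}$, $\{f_i > f_j\}$ is a union of $\mfB$-strata, and its trace on $[W,W']$ is a union of stratum traces $T_m \cap [W,W']$. Matching this with the three-piece decomposition of the previous step forces the sign to change only at separating rays of the $\mfB$-partition. This produces, in case~(a), a unique index $k \in \{1, \dots, s\}$ such that $\Box_{i,j,m}$ equals ``$<$'' for $m < k$ and ``$=$'' for $m \geq k$; in case~(b) it produces two indices $k < \ell$ with the asserted three-sign pattern.

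Finally, I would identify the transition rays $\widetilde Z$ (respectively $\widetilde{Z}_1, \widetilde{Z}_2$) with separating rays in the $\mfB$-indexing. By Remark~\ref{ref:2.12}, the boundary between the traces $T_{k-1} \cap [W,W']$ and $T_k \cap [W,W']$ is the ray carrying both labels $Z_{k-1}$ (as upper endpoint of $T_{k-1}$'s trace) and $Z_k$ (as lower endpoint of $T_k$'s trace). The half-open/closed structure pinned down by Lemma~\ref{lem:2.5}(a) then reads off as $\{f_i < f_j\} \cap [W,W'] = [W, Z_{k-1}[$ and $\{f_i = f_j\} \cap [W,W'] = [Z_k, W']$; case~(b) follows by applying the same identification at each of the two transitions. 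The main obstacle is this last bookkeeping step, in particular the tracking of which adjacent stratum each separating ray belongs to under the Case~1 versus Case~2 dichotomy of the construction; but since the half-open/closed pattern is forced by Lemma~\ref{lem:2.5}, no further analytic work is needed.
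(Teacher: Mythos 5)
Your proposal is correct and follows essentially the same route as the paper: apply Lemma~\ref{lem:2.5} to the pair $(f_i,f_j)$ to get the one (resp.\ two) transition rays with the prescribed half-open/closed structure, then use constancy of the sign on each stratum trace to force these rays to coincide with separating rays of the $\mfB$-partition. The only cosmetic difference is that you re-justify the applicability of Lemma~\ref{lem:2.5} via piecewise monomiality, whereas the paper applies it directly since $f_i,f_j$ are already $S$-basic.
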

\begin{proof} (a):
  By Lemma \ref{lem:2.5} there is a ray $Z$ in $[W,W']$ such that $f_i < f_j$ on $[W,Z[ $ and $f_i =f_j $ on $[Z,W']$. Since on every set $T_r \cap [W,W']$ we have a constant sign $f_i < f_j$ or $f_i = f_j$, it is evident that $Z= Z_k$. Now all claims in part (a) are evident.

  \pSkip (b):    By Lemma \ref{lem:2.5} we have two rays $Z',Z''$ in $[W,W']$ such that $f_i < f_j$ on $[W,Z'[ $, $f_i =f_j $ on $[W,Z']$, $f_i > f_j $ on $]Z',W']$. This forces $Z' = Z_k$, $Z'' = Z_\ell$, and gives all the claims in part~ (b).
\end{proof}

%
%
%
%

\begin{cor}\label{cor:2.15}
  Given two strata $T \neq T'$ with $T'$ a derivate of $T$, there is a sequence of strata
  $T_0 = T, T_1, \dots, T_s = T'$ such that $T_k$ is a direct derivate of $T_{k-1}, $ for $k=1,\dots,s$
\end{cor}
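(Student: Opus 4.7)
The plan is to choose any rays $W \in T$ and $W' \in T'$, and then recover the desired chain by walking from $W$ to $W'$ along the interval $[W, W']$. Since $T'$ is a derivate of $T$, Remark~\ref{rem:2.9}(a) provides the minimal relaxation $U := U(T, T')$ of $T$ with $T'$ as a component, and by Theorem~\ref{thm:2.3} the set $\{U\}$ is convex. Therefore $[W, W'] \subset \{U\}$, so every stratum meeting $[W, W']$ is a component of $U$. Writing the finitely many strata meeting $[W, W']$ in order with respect to $\leq_W$ as
$$T = T_0, \ T_1, \ \ldots, \ T_s = T',$$
each $T_k$ is a component of $U$, so at any pair of indices $(i,j)$ the sign of $T_k$ is either the common sign of $T$ and $T'$, or else a sign (strict or $=$) occurring in one of $T$ and $T'$ at a position where the two differ.

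Next I would verify that the sequence $(T_0, T_1, \ldots, T_s)$ is monotone in the derivate order, i.e., $T_k$ is a derivate of $T_{k-1}$ for every $k$. For a pair $(i,j)$ at which $T$ and $T'$ carry the same sign, every $T_k$ also carries that sign by the previous paragraph, so the sign is constant along the walk. For a pair where $T$ is strict (say $<$) and $T'$ is $=$, Theorem~\ref{thm:2.13}(a) forces the sign to switch from $<$ to $=$ at exactly one index $k^*$, and in particular the sign never reverts from $=$ back to strict. Summing over all pairs, $T_k$ carries pointwise at least as many $=$'s as $T_{k-1}$, with all strict signs of $T_k$ inherited from $T_{k-1}$. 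This is precisely the statement that $T_k$ is a derivate of $T_{k-1}$.

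Finally I promote derivate to \emph{direct} derivate at each step. For each $k$, let $Z_k$ be the separating ray between $T_{k-1}$ and $T_k$ on $[W,W']$. Exactly one of $T_{k-1}, T_k$ contains $Z_k$; picking a ray strictly on the other side of $Z_k$ inside the other stratum yields a subinterval entirely contained in $T_{k-1} \cup T_k$. Theorem~\ref{thm:2.6} then tells us $T_{k-1}$ and $T_k$ are neighbors, so $T_{k-1} \cup T_k$ is convex. Since $T_k$ is a derivate of $T_{k-1}$ and $T_k \neq T_{k-1}$, Case~2 of Theorem~\ref{thm:2.6} (which would make $T_{k-1}$ a derivate of $T_k$, forcing equality of the two types) is ruled out; hence Case~1 holds, and $T_k$ is a direct derivate of $T_{k-1}$, as desired.

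The main obstacle I anticipate is the second paragraph, the monotonicity of the walk: one must invoke the Sign Changing Theorem pair-by-pair and observe that once a sign has relaxed to $=$ along the interval, it cannot return to a strict sign. The rest is bookkeeping anchored in the local geometry supplied by Theorem~\ref{thm:2.6} together with Corollary~\ref{cor:2.8}.
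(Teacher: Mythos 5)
Your proposal is correct and follows essentially the same route as the paper: pick $W\in T$, $W'\in T'$, walk along $[W,W']$ through the ordered sequence of strata, and apply the Sign Changing Theorem~\ref{thm:2.13}(a) pair by pair to see that each sign either stays constant or relaxes from strict to $=$ exactly once, so each consecutive step is a direct derivation. You merely make explicit two points the paper leaves implicit (that $[W,W']\subset\{U(T,T')\}$ by convexity, and the use of Theorem~\ref{thm:2.6} to rule out the reverse derivation at each step), which is fine.
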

\begin{proof}
  We choose rays $W \in T$ and $W' \in T'$. Given two different functions $f_i, f_j$ in $\mfB$, we may assume that $f_i(W) < f_j(W),$ perhaps interchanging $f_j$ and $f_j$. Now we are in case (a) of Theorem \ref{thm:2.13}. Either the pair $(f_i,f_j)$ has the same sign, $<$ or $=$, on each $T_k$, $0 \leq k \leq s$, or changes sign from $<$ to $=$ from $T_{k-1}$ to $T_k$ at exactly one $k$, $1 \leq k \leq s$. This test, done with all sets $\{ f_i, f_j\} $, proves that each $T_k$ is a direct derivate of $T_{k-1}$.
\end{proof}
It may  happen that for two different strata $T$ and $T'$ with $T' $ a derivate of $T$ there are several sequences $T_0, \dots, T_s$ of consecutive direct derivates with $T= T_0$, $T' = T_s$. We document this by writing down the chart of all direct derivates in Example \ref{exmp:2.4}.(b) for the ascending basic types listed in \cite[\S7]{QF1}. We mark  a direct derivation $T'$ of a stratum $T$ by an arrow $T \to T'$.

\begin{rem}\label{rem:2.16} Assume that $Y_1, Y_2$ are rays with $\CS(Y_1,Y_2) > e$ and $\mfB$ consists of the five functions in Example \ref{exmp:2.4}.(b). By \cite[Table 4.4]{QF1} we have
  the ascending basic types $A, \partial A, B, \partial B, E, \partial E$, and obtain from the list of relaxations in \cite[Scholium 4.6]{QF1} the chart of direct derivations
%
%
  \begin{equation}
  \label{eq:2.13}
  \begin{gathered}
\xymatrix{   A    \ar @{>}[r]  \ar
@{>}[dr] &  E  \ar @{>}[rd] &   \\
& \partial A  \ar @{>}[r]  &  \partial E  \\
B  \ar @{>}[r]   \ar @{>}[ur] & \partial B  \ar @{>}[ur]    }
\end{gathered}
\end{equation}
  Thus we have two sequences of direct derivations $(A, E , \partial E )$ and $(A, \partial A , \partial E )$
  from $A$ to $\partial E$ and two such sequences $(B, \partial A , \partial E )$ and $(B, \partial B , \partial E )$ from $B $ to $\partial E$.
\end{rem}

\begin{comm}\label{com:2.17} We now have the means at hands to convey  the idea that the $\mfB$-partition of $\Ray(V)$ is a tropical analogue of various classical notions of stratification in differential topology and elsewhere, e.g., real semialgebraic geometry \cite[\S9]{BCR}. We view an interval $[W,W']$ or $[W, W'[$ with its total ordering $\leq _W$ as a ``curve'', where a moving ray $X$ starts at $W$ and travels to $W'$.

Assume that $T'$ is a direct derivate of $T$. For any curve $[W,W']$ with $W \in T$, $W' \in T'$ there is a first ray $Z$ with $Z \in T'$, i.e., $ [W,Z [ \subset T$, $[Z,W'] \subset T'$. This is reminiscent of various curve selection  properties in classical theories. To locate  the strata in the boundary of a given stratum, or special points of them: $[W,Z[$ is a curve in $T$ with a limit ray  $Z$ in~ $T'$. Every direct derivate of $T$ can be located by such a curve $[W,Z[$.


The reader may feel irritated by the primitive shape of this curve selection property. But note that we are in a very special situation. Our basic predicates are formulas $f \ds \Box g$ for two functions $f,g$ where  $\Box$ is one of the signs $<, = , > $, and the functions $f,g$ are $R$-linear combinations of CS-functions, so that each set $\{ X \ds | f(X) \ds \Box g(X)\} $ is convex in $\Ray(V).$

It is natural to study such ``comparison sets'' $\{ X \ds | f(X) \ds \Box g(X)\} $ for functions $f,g$ on suitable convex subsets $C$ of $\Ray(V)$, which are piecewise monomial on each closed interval $[W,W'] \subset C$. As is obvious  from Corollary \ref{cor:1.10}, there is still a partition of each $[W,W']$ into finitely many intervals, on which the pair $(f,g)$ has constant sign. While for $f$ and $g$ in ~$\mfB$ we have at most three such intervals, then there often will be many more, but Questions A and B from above retain their  sense.

It appears to us that for suitable pm-functions a richer stratification theory is available, for which our present theory serves as a starting point and gives a base. Already the class of polynomials $P(f_1, \dots, f_r)$ with $f_1, \dots, f_r \in \mfB$ deserves interest. Also the class of functions $\vrp \circ f $  with $f \in \mfB$ and $\vrp$ a pm-function on $[0,\infty]$ comes to mind, cf. Theorem \ref{thm:1.6}.
\end{comm}

The membership problems addressed at the end of the introduction become more natural and more accessible by putting pm functions into play. Let $\mfM'$ denote the set of all $R$-valued functions on $\Ray(V),$ which restrict to  pm functions on all closed intervals in $\Ray(V)$. Assume as before that $\mfB \subset \mfL$ are sets of linear combinations of CS-functions (but not necessarily finite). Then $\mfB \subset \mfL\subset \mfM\subset \mfM'$. If now $f \in \mfL$ is given, we  ask for a toolbox $(\mfD,J)$, consisting of a set $\mfD \subset \mfM'$ and a (hopefully small) set $J$ of closed intervals in $\Ray(V)$, to test membership of $f\in \mfL$ in $\mfB$ by comparing $f$ with the functions in $\mfD
$ on the intervals in  $J$. The main advantage of the new setting is, that we have much simpler test functions at hands.
We provide a simple instance of such a membership problem.


\begin{example}\label{exmp:3.17}
  Assume that $\CS(Y_1,Y_2) > e$. As before, let $\mfM$ denote the set of linear combinations of CS-functions on $\Ray(V)$ with coefficients in $R \sm \00$. Let $\mfL$ denote the set of functions $\CS(W,-)$ on $[\overrightarrow{Y_1,Y_2}]$ with positive ascending or descending profile, and $\mfB$ denote the set of those $f \in \mfL$ which have a glen, i.e., are of type $B,\partial B,B'$, cf. Example \ref{exmp:2.4}.(b) above (N.B. $\partial B = \partial B'$). Then $(\mfD,J)$ with $J$ consisting of one interval $[Y_1,Y_2]$, and $\mfD $ the set of constant function $g = c > 0$ on  $[Y_1,Y_2]$, is a toolbox for $(\mfB,\mfL)$. Indeed, if $f \in \mfL$ is given, then $f \in \mfB$ iff for the constant function $g$ of value $\min((f(Y_1),f(Y_2))$ we have $f <g$ on some subinterval $]Z_1,Z_2[$ of  $[Y_1,Y_2]$, the glen of $f$.
\end{example}

\section{Entering a direct derivate of a stratum}\label{sec:3}

Returning to a $\mfB$-partition on $\Ray(V)$, we define ``entrance and exit rays'' for a basic $\mfB$-stratum $T$ as follows.

\begin{defn}\label{def:3.1} Assume that $T'$ is a direct derivate of $T$. Given rays $W \in T$, $W' \in T'$ there is a unique ray $Z$ such that $[W, Z[ \; \subset T$, $[Z,W'] \subset T'$. We call such ray $Z$ an \textbf{exit ray} of $T$  and \textbf{entrance ray} of $T'$. We call the set of all entrance rays of $T'$ the \textbf{inner border} of $T'$ and denote this set by $\partial T' $, and the set of all exit rays of $T$ the \textbf{outer border} of  $T$, denoted by $\opr  T$. Thus
\begin{align}
  {\partial} T' & = \{ Z \in T' \ds | \exists W \in T : [W,Z [ \; \cap T' = \emptyset \}, \label{eq:3.1} \\
  \opr  T &  =  \{ Z \in \Ray(V) \setminus T \ds | \exists W \in T : [W,Z [  \subset T \}. \label{eq:3.2}
\end{align}

\end{defn}

Note that each interval $[W,W']$ with $W \in T $, $W' \in T' $ either meets $\partial T$ or $\opr  T$.
Every neighbor of $T$ either contains entrance rays from $T$, or has exit rays in $T$, but not both.
\pSkip

In the following we assume that $T$ and $T'$ are strata with $T'$ a direct derivate of $T$.
\begin{thm}\label{thm:3.2}
  If $W_1,W_2$ are rays in $T$ and $Z_1,Z_2$ are rays in $T'$ such that  $[W_1, Z_1[ \; \subset T$, $[W_2, Z_2[ \; \subset T$, then $[W, Z[ \; \subset T$ and $Z \in T'$ for every $ W \in \; ]W_1, W_2[$ and  $Z \in [Z_1, Z_2].$
\end{thm}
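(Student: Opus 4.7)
The plan is to handle the two easy convexity claims immediately, then prove $[W, Z[ \; \subset T$ by a direct convexity argument using a vector rearrangement, bypassing the separator machinery of \thmref{thm:2.6}.

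First, by convexity of the strata $T$ and $T'$ (\thmref{thm:2.3}), $W \in [W_1, W_2] \subset T$ and $Z \in [Z_1, Z_2] \subset T'$; these take care of the two endpoint statements. For $[W, Z[ \; \subset T$ I would fix base vectors $w_i \in W_i$, $z_i \in Z_i$, and write $W = \ray(w_1 + s w_2)$, $Z = \ray(z_1 + t z_2)$ with $s \in \; ]0, \infty[$ and $t \in [0, \infty]$. A typical ray on $[W, Z[$ then has the form $X_\mu = \ray\bigl((w_1 + s w_2) + \mu(z_1 + t z_2)\bigr)$ with $\mu \in [0, \infty[$.

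The crux is the vector identity
$$
(w_1 + s w_2) + \mu(z_1 + t z_2) \;=\; (w_1 + \mu z_1) \;+\; s\bigl(w_2 + \tfrac{\mu t}{s}\, z_2\bigr),
$$
valid for $0 < s, t < \infty$. It exhibits $X_\mu$ as an intermediate ray on the interval joining $\ray(w_1 + \mu z_1)$ and $\ray\bigl(w_2 + \tfrac{\mu t}{s} z_2\bigr)$; since $\mu$ and $\mu t/s$ both lie in $[0, \infty[$, these bridging rays belong to $[W_1, Z_1[$ and $[W_2, Z_2[$ respectively, hence to $T$ by hypothesis. By convexity of $T$, $X_\mu \in T$. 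The boundary cases $t = 0$ (so $Z = Z_1$) and $t = \infty$ (so $Z = Z_2$) are handled by the simpler identities $(w_1 + s w_2) + \mu z_1 = (w_1 + \mu z_1) + s w_2$ and, after reparametrizing $[W, Z_2]$ as $\mu \mapsto \ray((w_1 + s w_2) + \mu z_2)$, $(w_1 + s w_2) + \mu z_2 = w_1 + s\bigl(w_2 + \tfrac{\mu}{s} z_2\bigr)$, in which one bridging endpoint is $W_1$ or $W_2$ itself (trivially in $T$) and the other lies in $[W_i, Z_i[ \; \subset T$.

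Once the rearrangement is spotted I see no serious obstacle: the argument rests solely on the convexity of $T$ and on the explicit interval parametrizations. The only technical care needed is to separate out the boundary values $t \in \{0, \infty\}$, since the parametrization of $Z$ by $t$ degenerates there.
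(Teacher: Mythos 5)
Your proposal is correct and is essentially the paper's own argument: the paper also gets $Z\in T'$ from convexity of $T'$ and then rewrites a vector of $X\in[W,Z[$ as $(w_1+\lm_1 z_1)+(w_2+\lm_2 z_2)$, placing $X$ on an interval between rays of $[W_1,Z_1[$ and $[W_2,Z_2[$ and concluding by convexity of $T$. Your explicit bookkeeping of the scalars $s,t,\mu$ and the separate treatment of the degenerate parameters $t\in\{0,\infty\}$ only makes precise what the paper leaves implicit.
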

\begin{proof}
  $Z$ is  in $T'$, since $Z_1, Z_2 \in T'$, and $T'$ is convex. Given $X \in [W,Z[$,  we need to verify that $X \in T$. Since $W \in\; ]W_1,W_2[$ , there are vectors $w_1 \in W_1$, $w_2 \in W_2$ such that
  $$W = \ray(w_1 + w_2).$$
  This implies that
  $$\begin{array}{cc}
   X & = \ray((w_1 + w_2 ) + (\lm_1 z_1 + \lm_2 z_2)) \\[1mm]
   & = \ray((w_1 + \lm_1 z_1)+ (w_2  + \lm_2 z_2))
    \end{array}
   $$
  for some $z_1, z_2 \in Z$, $\lm_1, \lm_2 \in R$.  We have
  $$  \ray(w_i + \lm_i z_i) \in [W_i,Z_i] \subset T \qquad \text{for } i= 1,2,$$
  and  conclude that $X \in T.$
  \end{proof}

\pSkip \emph{An illustration  aid.} Imagine that $T$ and $T'$ are neighboring countries and that every closed interval $[W,W'] $   with $W \in T$, $W' \in T' $ is an air path from a location  $W$ in $T$ to a location $W'$ in $T'$. Furthermore, suppose that every plane flying from $T$ to $T'$ is forced to land for control at the first airport $Z$ right after entering $T'$. In this scenario the theorem says that, if $[W_1, Z_1]$ and $[W_2, Z_2]$ represent correct flights from $T$ to $T'$, then for every $W \in ]W_1, W_2[$ and $Z \in [Z_1, Z_2]$ also $[W,Z]$ represent a correct flight.

\begin{cor}\label{cor:3.3} $ $
\begin{enumerate} \ealph
  \item Let  $W \in T$ and $Z_1,Z_2\in T'$ with $[W, Z_1[ \; \subset T'$, $[W, Z_2[ \; \subset T'$. then $[W,Z[ \; \subset T$ for every $Z \in [Z_1, Z_2]$.
  \item    Let  $W_1,W_2 \in T$  and $Z \in T'$, and assume that $[W_1, Z[ \; \subset T$, $[W_2, Z[ \; \subset T$. Then $[W, Z[ \; \subset T$  for every $ W \in [W_1, W_2]$.

\end{enumerate}
\end{cor}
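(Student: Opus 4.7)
The plan is to recognize parts (a) and (b) as the degenerate cases of Theorem \ref{thm:3.2} in which one of the pairs $(W_1,W_2)$ or $(Z_1,Z_2)$ collapses to a single ray, and to show that the argument used there adapts with essentially no change. The crucial algebraic input is the bipotent identity $u + u = u$ (which holds in $V$ because $R = eR$ forces $e = 1$), and which drove the decomposition $X = \ray((w_1 + \lm_1 z_1) + (w_2 + \lm_2 z_2))$ inside the proof of Theorem \ref{thm:3.2}. It will play the same role here with the fixed endpoint duplicated into two copies.

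For part (a) I would first dispose of the endpoint cases $Z = Z_1$ and $Z = Z_2$, which are immediate from the hypotheses $[W,Z_1[\, \subset T$ and $[W,Z_2[\, \subset T$. For $Z$ in the open interval $]Z_1,Z_2[$ I parametrize $Z = \ray(z_1 + \eta z_2)$ with $z_i \in Z_i$ and $\eta \in R$, $\eta > 0$. Given $X = \ray(w + \mu(z_1 + \eta z_2)) \in [W,Z[$ with $w \in W$ and $\mu \in R$, the identity $w + w = w$ rewrites
\[ X = \ray\bigl((w + \mu z_1) + (w + \mu\eta z_2)\bigr). \]
The two summands represent rays in $[W,Z_1]$ and $[W,Z_2]$, and because $\mu$ and $\mu\eta$ are finite elements of $R$ these two rays lie in $[W,Z_1[ \,\subset T$ and $[W,Z_2[\, \subset T$ respectively. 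Hence $X$ sits in the ray interval between two rays of $T$, and convexity of the stratum $T$ (Theorem \ref{thm:2.3}) gives $X \in T$.

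Part (b) is entirely symmetric: after discarding the trivial cases $W \in \{W_1,W_2\}$, I parametrize $W = \ray(w_1 + \eta w_2)$ with $w_i \in W_i$, $\eta \in R$, $\eta > 0$, and for $X = \ray((w_1 + \eta w_2) + \mu z) \in [W,Z[$ I apply the analogous identity $\mu z + \mu z = \mu z$ to decompose
\[ X = \ray\bigl((w_1 + \mu z) + (\eta w_2 + \mu z)\bigr). \]
The two summands represent rays in $[W_1,Z[$ and $[W_2,Z[$, both in $T$ by hypothesis, so convexity of $T$ again yields $X \in T$. I do not expect a genuine obstacle; the only point requiring care, already implicit in the proof of Theorem \ref{thm:3.2}, is to check that the parameters $\mu$, $\mu\eta$, $\mu/\eta$ all remain in $R$ so that the intermediate rays truly lie in the half-open intervals $[W,Z_i[$ (respectively $[W_i,Z[$) rather than accidentally reaching the forbidden endpoints in $T'$. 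This is automatic from $\mu \in R$ and the positivity of $\eta$.
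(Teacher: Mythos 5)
Your proof is correct and takes essentially the same route as the paper, which disposes of Corollary \ref{cor:3.3} by specializing Theorem \ref{thm:3.2} to the degenerate cases (collapsing one pair of endpoints to a single ray, with the boundary cases $W=W_1,W_2$ in (b) covered by hypothesis); you simply unfold the same bipotent decomposition $u+u=u$ that drives that theorem's proof instead of citing it. One caveat: your claim that the summand rays stay in the half-open intervals ``because the parameters are finite'' is not a valid justification ($\ray(w+\mu z_1)$ can equal $Z_1$ for finite $\mu$), but the paper's own proof of Theorem \ref{thm:3.2} glosses over exactly the same point, so your argument is no less rigorous than the original.
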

\begin{proof} (a): Apply Theorem \ref{thm:3.2} with $Z_1= Z_2 = Z$. \pSkip
(b):  The claim holds for $W$ in $]W_1, W_2[  $ by  Theorem \ref{thm:3.2}.  For the remaining two rays $W_1$ and ~$W_2$ it holds by assumption.
\end{proof}

\begin{thm}\label{thm:3.4}
   Given $W_1,W_2 \in T$ and $Z_1,Z_2\in T'$, assume that all sets
    $[W_1,Z_1[$, $[W_1,Z_2[$, $[W_2,Z_1[$, $[W_2,Z_2[$ are contained in $T$.
    Then for every $W \in  [W_1, W_2]$ and $Z \in [Z_1, Z_2]$ again
      $[W, Z[ \subset T$ and $Z \in T'$.
\end{thm}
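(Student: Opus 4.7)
The plan is to reduce Theorem~\ref{thm:3.4} directly to the two preceding results, Theorem~\ref{thm:3.2} and Corollary~\ref{cor:3.3}(a), by splitting on the position of $W$ in the interval $[W_1,W_2]$. The conclusion $Z \in T'$ is immediate in every case: since $Z_1,Z_2 \in T'$ and $T'$ is convex (being a stratum of the $\mfB$-partition, cf.\ Theorem~\ref{thm:2.3}), the whole subinterval $[Z_1,Z_2]$ lies in $T'$.

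Now I would argue $[W,Z[ \,\subset T$ by cases on $W$. For $W \in \,]W_1,W_2[$\,, I invoke Theorem~\ref{thm:3.2} using only two of the four hypotheses, namely $[W_1,Z_1[ \,\subset T$ and $[W_2,Z_2[ \,\subset T$; the theorem then delivers $[W,Z[ \,\subset T$ for every $Z \in [Z_1,Z_2]$. For the boundary case $W = W_1$, the remaining two hypotheses $[W_1,Z_1[\,\subset T$ and $[W_1,Z_2[\,\subset T$ are exactly what is needed to apply Corollary~\ref{cor:3.3}(a) (read with the natural correction $\subset T$), which yields $[W_1,Z[\,\subset T$ for all $Z \in [Z_1,Z_2]$. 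The case $W = W_2$ is handled symmetrically using $[W_2,Z_1[\,\subset T$ and $[W_2,Z_2[\,\subset T$. Combining the three cases covers every $W \in [W_1,W_2]$, so the claim is established.

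I do not expect any real obstacle here; the content of the theorem is precisely that the ``rectangle'' of flights determined by the four corner conditions is filled in, and Theorem~\ref{thm:3.2} already fills in the open interior while Corollary~\ref{cor:3.3}(a) fills in each of the two ``edges'' $\{W_1\}\times [Z_1,Z_2]$ and $\{W_2\}\times [Z_1,Z_2]$. It is worth noting that all four hypotheses are genuinely used — two for the open interior and one additional pair for each endpoint — so the statement is in a natural sense the minimal common strengthening of Theorem~\ref{thm:3.2} and Corollary~\ref{cor:3.3}(a).
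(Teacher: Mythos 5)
Your proof is correct and is essentially identical to the paper's: $Z \in T'$ by convexity of $T'$, Theorem~\ref{thm:3.2} handles $W \in \,]W_1,W_2[$\,, and Corollary~\ref{cor:3.3}(a) (with the evident typo $\subset T'$ corrected to $\subset T$) handles the endpoints $W=W_1$ and $W=W_2$. Nothing further is needed.
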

\begin{proof}
  $Z$ is in $T'$ since $T'$ is convex.
   If $W \in\; ]W_1,W_2[$ and $Z \in [Z_1,Z_2]$, then $[W,Z [ \subset T$ by Theorem \ref{thm:3.2}. If
   $W =W_1$ and $Z \in [Z_1,Z_2],$ then $[W,Z [ \subset T$ by Corollary  \ref{cor:3.3}. If $W =W_2$, we obtain the same by interchanging $W_1$ and $W_2$.
\end{proof}

Encouraged by Theorem \ref{thm:3.2} we search systematically for convex subset $A \subset T$, $B \subset T'$ with $[W, Z[ \in T$ for all $W \in A$, $Z \in B$. (In short, all flights from $A$ to $B$ are correct.)
Note that then $B$ is a  convex subset of $\opr T \cap T'$.

\begin{defn}\label{def:3.5} (As before $T'$ is a direct derivate of $T$.)
\begin{enumerate}\ealph
  \item  For a given $W \in T$ we define
  $$\sphericalangle (W,T') := \{ Z \in T' \ds| [W,Z[ \subset T\}.  $$
  \item  For a given $Z \in T'$ we define
  $$\sphericalangle (T,Z) := \{ W \in T \ds| [W,Z[ \subset T\}.  $$
\end{enumerate}
\end{defn}
Both of these sets are convex by Corollary \ref{cor:3.3}. In illustrative  terms:
$\sphericalangle (W,T')$ represents the set of airports in $T'$ admitted for $W$, and $\sphericalangle (T,Z)$ represents the set of airports in $T$ for reaching $Z$.

It is obvious from Definition \ref{def:3.1} that $(\opr T) \cap T'$ is the union of all sets $\sphericalangle (W,T')$ with $W$ running through  $T'$,
\begin{equation}\label{eq:3.3}
  (\opr T) \cap T' = \bigcup_{W \in T} \sphericalangle (W,T').
\end{equation}

\begin{defn}\label{def:3.6} $ $
\begin{enumerate}\ealph

\item A \textbf{junction} for $T$ and $T'$ is a triple $(W, W' ,Z)$ with $Z \in T'$, $[W,Z [ \subset T$, and $[W',Z[ \subset T$. In other terms $Z \in T'$ and $W,W' \in \sphericalangle(T,Z)$.

    \item A \textbf{butterfly}  (for $T$ and $T'$) is a quadruple $(W,W',Z,Z')$ with
    $Z,Z' \in T'$, $Z \neq Z'$, and $[W,Z [$, $[W',Z [ $, $[W,Z'[$, $[W', Z'[$ subsets of $T$.
\end{enumerate}
\end{defn}
Thus a butterfly consists of two junctions $(W,W',Z)$, $(W,W',Z')$ which have a common ``base'' $[W,W']$.

$$
\xymatrixrowsep{5mm}
\xymatrixcolsep{6mm}
\xymatrix{   \ar @{-}[ddrrrrr] & Z \ar @{-}[ldd] \ar @{-}[rrddd] & & \\
& & & & Z' \ar @{-}[ddl] \ar @{-}[dllll] \\
W & & & & & T' \\
& & & W'   }
$$

\begin{rem}
  As consequence of Theorem \ref{thm:3.4} we have the following fact. If $(W_1,W_2,Z_1,Z_2)$ is a  butterfly  for $T$ and $T'$, then, for any rays $W'_1, W'_2$ in $[W_1,W_2]$ and  $Z'_1, Z'_2$ in $[Z_1, Z_2]$, $(W_1',W_2',Z_1',Z_2')$ is again a butterfly for $T$ and $T'$. In particular $(W,W_1,Z_1,Z_2)$ is a butterfly for any $W \in [W_1, W_2],$ implying that $[Z_1,Z_2] \subset \opr T \cap T'$, a result stated already in Corollary ~\ref{cor:3.3}.(a).
\end{rem}

Searching for explicit instances where junctions and butterflies occur, we need mild regularity properties of the involved CS-functions. They pertain to the bilinear companion $b: V \times V \to R$.

\begin{defn}\label{def:4.8} Given an $S$-basic CS-function $f = \sum_{j=1}^n \gm_i\CS(Y_j, - )$ with $\gm_j \neq 0$ for all $j$ (cf. Definition \ref{def:3.1}), we call  a ray $W$ in $V$ \textbf{$f$-regular}, if $\CS(Y_j, W) >0$ for all $j \in \{ 1, \dots, n\}$.
\end{defn}
\noi 
 Note that $\CS(Y_j,W) > 0$ iff $b(y_j, w)>0$ for $y_i \in Y_j$, $w \in W$.

\begin{lem}\label{lem:4.9}
  Assume that $W$ and $W'$ are rays in $T$ and $Z$ is a ray in $T'$ with $[W,Z[ \subset  T$. Choose vectors $w \in W$, $w' \in W'$, $z \in Z$, for which $W = \ray(w)$, $W' = \ray(w')$, $Z = \ray(z)$. Assume that $f$ is an $S$-basic CS-function, for which the ray $Z$ is $f$-regular.  Then there exist values $d>0$, $c>0$ in $R$ (which can be made explicit) such that $f$ is constant on every interval $[\ray(z + \mu w),\ray(z + \mu w + \lm w')]$, where $0 \leq \lm \leq c$, $0 \leq \mu \leq d$.\end{lem}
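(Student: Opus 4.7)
The plan is to parametrize each ray of $[\ray(z+\mu w),\,\ray(z+\mu w+\lm w')]$ explicitly and then to compute each $\CS(Y_j,X)$ directly. A generic representative is
$$\veps_1 + \nu\veps_2 = (z+\mu w) + \nu(z+\mu w+\lm w') = (1+\nu)z + (1+\nu)\mu w + \nu\lm w'$$
as $\nu$ ranges over $[0,\infty]$. Using $1+\nu=\max(1,\nu)$ in the bipotent semifield $R$, one checks that for $\nu\ge 1$ this vector is $\nu\cdot(z+\mu w+\lm w')$ and so represents the right endpoint, while for $\nu\le 1$ it equals $z+\mu w+\lm' w'$ with $\lm':=\nu\lm\in[0,\lm]$. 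Hence every ray in the interval has the form $\ray(z+\mu w+\lm' w')$ for some $\lm' \in [0,\lm]$, and since $f=\sum_{j=1}^n\gm_j\CS(Y_j,-)$, it suffices to show
$$\CS(Y_j,\ray(z+\mu w+\lm' w')) = \CS(Y_j,Z)$$
for every $j$ and every $\lm'\in[0,\lm]$.

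For the numerator of the CS-ratio I would expand
$$b(y_j,\,z+\mu w+\lm'w') = b(y_j,z) + \mu b(y_j,w) + \lm' b(y_j,w').$$
By $f$-regularity of $Z$ we have $b(y_j,z)\ne 0$ for every $j$, so bipotency of $R$ collapses the whole sum to $b(y_j,z)$ as soon as $\mu\,b(y_j,w)\le b(y_j,z)$ and $\lm\,b(y_j,w')\le b(y_j,z)$ hold for every $j$. For the denominator the quadratic expansion
$$q(z+\mu w+\lm'w')=q(z)+\mu^2 q(w)+(\lm')^2 q(w')+\mu b(z,w)+\lm' b(z,w')+\mu\lm' b(w,w')$$
has leading term $q(z)>0$ by anisotropy, and bipotency again collapses the sum to $q(z)$ once each of the remaining five terms is at most $q(z)$. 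Combining,
$$\CS(Y_j,\ray(z+\mu w+\lm'w')) = \frac{b(y_j,z)^2}{q(y_j)\,q(z)} = \CS(Y_j,Z),$$
so $f$ is constant on the whole interval with value $f(Z)$.

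Explicit bounds on $c$ and $d$ can then be read off directly: take $d$ at most $\sqrt{q(z)/q(w)}$, at most $q(z)/b(z,w)$, and at most $b(y_j,z)/b(y_j,w)$ for each $j$ with $b(y_j,w)\ne 0$; take $c$ subject to the analogous bounds with $w'$ in place of $w$; and finally shrink the pair so that $cd\,b(w,w')\le q(z)$, which takes care of the cross term. (Any listed quotient with a zero denominator is simply omitted.) I do not foresee a genuine obstacle: once the parametrization has been reduced to $\lm'\in[0,\lm]$, the remainder is a bookkeeping exercise in which two positive leading terms $b(y_j,z)$ and $q(z)$, guaranteed respectively by $f$-regularity of $Z$ and by the standing anisotropy of $q$, absorb every smaller summand in the two bipotent sums. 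The only mild subtlety is the coupling of $c$ and $d$ by the mixed term $\mu\lm b(w,w')$, but this is harmless since both may be shrunk at the end.
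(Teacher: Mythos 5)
Your proof is correct and follows essentially the same route as the paper: reduce to a single summand $\CS(Y_j,-)$, expand $b(y_j,\cdot)$ and $q(\cdot)$, and let the positive leading terms $b(y_j,z)$ (from $f$-regularity) and $q(z)$ (from anisotropy) absorb the perturbations via bipotency for small enough $\lm,\mu$. You supply two details the paper leaves implicit — the explicit parametrization showing every ray of the interval is $\ray(z+\mu w+\lm'w')$ with $\lm'\in[0,\lm]$, and the concrete bounds on $c,d$ including the cross term $\mu\lm' b(w,w')$ — both of which check out.
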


\begin{proof}
  It suffices to verify the assertion  for $f = \CS(Y,-)$  with $Y\in S$, $\CS(Y,Z) >0$. Choosing $y\in Y$, we have $b(y,z)>0$ and
  $$ \CS(z + \mu w + \lm w', y) = \frac {b(z + \mu w + \lm w',y)^2}{q(z + \mu w + \lm w')q(y)}. $$
Since $q(z) >0$, there is some $a>0$ such that $q(z + \mu w + \lm w') = q(z)$ for $\lm + \mu \leq a$. Since $b(z,y) >0$, we then find $c >0$, $d >0$ such that
$$b(z + \mu w + \lm w',y) = b(z,y) + \mu b(w,y) + \lm b(w',y) = b(z,y)$$
for $\lm \leq c$, $\mu \leq d$. These bounds $a,c,d$ can be easily made explicit in terms of $z,w,w'$.
\end{proof}

For a given basic set $S = \{ Y_1, \dots, Y_n\} $ of rays we choose vectors $y_i \in Y_i$, $i \in \{ 1, \dots, n\} $.

\begin{defn}\label{def:4.10} We say that a vector $u \in V$ is \textbf{$S$-regular}, if $b(u,y_i) > 0$ for every $i \in \nSet$.
\end{defn}
$S$-regular vectors occur frequently as we explain briefly.
\begin{lem}\label{lem:4.11}
  Assume that the companion $b$ of $q$ has zero radical on the submodule $\tlV$ of $V$ corresponding to the convex subset $T \cup T'$ of $\Ray(V)$, cf. \cite[\S2]{CSFunctions}. This means that for every nonzero vector $x \in V$ with $\ray(x) \in T \cup T'$, there is some nonzero $v \in V$ with $\ray(v) \in T \cup T'$ and $b(x,v) >0$.
\end{lem}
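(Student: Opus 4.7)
The plan is to recognize this as a direct unpacking of the definition of \emph{zero radical}, once the submodule $\tlV$ attached to $T \cup T'$ has been made explicit.

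First, I would recall from \cite[\S2]{CSFunctions} how a convex set $C$ in $\Ray(V)$ gives rise to an $R$-submodule of $V$: one takes $\00$ together with all $v \in V \sm \00$ such that $\ray(v) \in C$. Convexity of $C$ ensures that this set is closed under addition (the tropical sum of two vectors whose rays lie in $C$ has its ray still in $C$), and closure under nonzero scalar multiplication is automatic from the definition of a ray. Applied with $C = T \cup T'$, which is convex by \corref{cor:2.8} (since $T'$ is a direct derivate of $T$ and so $T$, $T'$ are neighbors), this yields the submodule $\tlV$ with $\tlV \sm \00 = \{v \in V \ds | \ray(v) \in T \cup T'\}$.

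Next, the radical of $b$ restricted to $\tlV \times \tlV$ is by definition the set of all $x \in \tlV$ with $b(x,v) = 0$ for every $v \in \tlV$. The hypothesis that this radical is zero therefore says exactly that for each nonzero $x \in \tlV$ there is some $v \in \tlV$ with $b(x,v) \neq 0$. In our semiring $R = [0,\infty [$\,, the only value excluded from the nonzero elements is $0$, so $b(x,v) \neq 0$ is the same as $b(x,v) > 0$. Under the identification of $\tlV$ just established, the conditions ``nonzero $x \in \tlV$'' and ``$x \in V \sm \00$ with $\ray(x) \in T \cup T'$'' coincide, and likewise for $v$. Combining these observations yields the claimed existence of $v$.

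The only nontrivial point, and hence the sole potential obstacle, is confirming that $T \cup T'$ is indeed convex so that the correspondence of \cite[\S2]{CSFunctions} really produces the submodule $\tlV$ with the properties used above. This has already been arranged by our standing assumption that $T'$ is a direct derivate of $T$, which by Definition~\ref{def:2.7} and \corref{cor:2.8} makes $T$ and $T'$ neighboring strata, so that $T \cup T'$ is convex. Once this is in place, the proof is essentially a routine translation between the ``radical'' and ``orthogonal vector exists'' viewpoints, augmented by the elementary positivity observation in the bipotent semifield $R$.
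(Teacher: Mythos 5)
Your proof is a correct unpacking of the sentence that is printed as the lemma, but it does not prove what the paper's own proof proves, and this is a genuine gap rather than a stylistic difference. As the surrounding text makes clear (the lemma is introduced by ``$S$-regular vectors occur frequently as we explain briefly'' and is immediately used in Theorem \ref{thm:4.13}, whose hypothesis is that $T'$ contains an $S$-regular ray), the intended conclusion of Lemma \ref{lem:4.11} is \emph{existential}: under the zero-radical hypothesis there exists an $S$-regular vector $u$, i.e.\ one with $b(u,y_i)>0$ for all $i\in\{1,\dots,n\}$, whose ray lies in $T'$. The printed statement omits this conclusion (it is defectively phrased as a pure definition), and your argument --- which only translates ``zero radical on $\tlV$'' into ``for every nonzero $x$ with $\ray(x)\in T\cup T'$ there is $v$ with $\ray(v)\in T\cup T'$ and $b(x,v)>0$'' --- stops exactly where the paper's proof begins.

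The two ideas you are missing are the following. First, one must pass from ``for each $y_i$ separately there is some $v_i$ with $b(y_i,v_i)>0$'' to a single vector that works for all $i$ simultaneously: set $v:=v_1+\cdots+v_n$; since addition in $R$ is the maximum, $b(y_i,v)\ge b(y_i,v_i)>0$ for every $i$. Second, this $v$ need not have its ray in $T'$; if $\ray(v)\in T$, one picks some $x$ with $\ray(x)\in T'$, takes the entrance ray $Z$ of $[\ray(v),\ray(x)]$ in $T'$, and writes a vector $z=v+\zt x\in Z$ with $\zt\neq 0$; again because addition is the maximum, $b(y_i,z)\ge b(y_i,v)>0$, so $z$ is $S$-regular and $\ray(z)\in T'$. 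Your observations about the convexity of $T\cup T'$ and the equivalence of ``nonzero'' and ``positive'' in $R$ are fine as far as they go, but without these two constructive steps the lemma, as it is actually used in the sequel, is not established.
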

\begin{proof}
  Our goal is to find a vector $u\in V$ for which  $b(y_i,u) >0$ for $i \in \{1, \dots, n \}$  with  $\ray(u) \in T'$. We choose vectors $v_i \in V$ with $b(y_i,v_i) > 0 $ for $i \in \{1, \dots, n \}$, which is possible by our assumption. We obtain for $v:= v_1+ \cdots + v_n$ that $b(y_i,v) >0$ for $i \in \{1, \dots, n \}$.

   If $v \in T'$, we are done by taking $u =v$.
  Assume that $\ray(v) \in T$. We then pick some $x \in V$ with $\ray(x) \in T'$. Let $Z$ denote the entrance ray of $[\ray(v),\ray(x)]$ in $T'$. We have a vector $z \in Z$ with $z = v + \zt x$ and nonzero $\zt \in R$. Then $b(z,y_i) >0$ for $i \in \{1, \dots, n \}$ and $\ray(z) \in T'$. We are done by taking $u =z$.
\end{proof}

\begin{example}\label{exmp:4.12}
  Assume that the pair $(q,b)$ is balanced \cite[\S1]{QF1}. Then $b(x,x) = q(x) > 0$ for every $x \neq 0$ in $V$, and so the restriction of $b$ to any nonzero submodule of $V$ has zero radical.
\end{example}
We are ready for a first positive result about existence of butterflies.

\begin{thm}\label{thm:4.13}
  Assume that $T'$ contains an $S$-regular ray $U$. Given rays $W,W'$ in $T$, let $Z \in T'$ denote the entrance ray of $[W,U]$ in $T'$. Choose vectors $w \in W$, $w' \in W'$ and $z\in Z$. Then there exists some $c > 0 $ in $R$ such that $\ray(z + \lm w')$ is the entrance ray in $T'$ of the interval
  $[\ray(w +  \lm w'), \ray(z +  \lm w')]$   for every $\lm \in [0,c]$.
\end{thm}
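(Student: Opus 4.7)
The plan is to control each CS-function $f \in \mfB$ in a uniform neighborhood of the chosen representative $z$ via Lemma \ref{lem:4.9}, and then split $[W_\lm, Z_\lm[$ into a piece handled by this constancy and a tail handled by convexity of $T$. Throughout write $W_\lm := \ray(w + \lm w')$ and $Z_\lm := \ray(z + \lm w')$.

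The preparatory step is to verify that $Z$ is $S$-regular. Since $Z$ lies on $[W,U]$, it has a representative of the form $w + \zeta u$ with $u \in U$ and $\zeta \in R \sm \00$, and $S$-regularity of $U$ forces $b(y_j, w + \zeta u) \geq \zeta b(y_j, u) > 0$ in the max semifield for every $j$. This makes $Z$ $f$-regular for every $f \in \mfB$ by Definition \ref{def:4.8}. Applying Lemma \ref{lem:4.9} to each of the finitely many $f \in \mfB$ and taking minima produces positive constants $d$ and $c^*$ such that every $f \in \mfB$ is constant on each interval $[\ray(z + \mu w), \ray(z + \mu w + t w')]$ whenever $\mu \in [0,d]$ and $t \in [0,c^*]$. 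Then set $c := c^*/(1+d)$.

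The main step is to parametrize $[W_\lm, Z_\lm]$ by $X_\mu := \ray(z + \mu w + \lm(1+\mu)w')$ for $\mu \in [0,\infty]$, so that $X_0 = Z_\lm$ and $X_\infty = W_\lm$. For $\lm \in [0,c]$ and $\mu \in [0,d]$ the quantity $t := \lm(1+\mu)$ lies in $[0,c^*]$, and $X_\mu$ is precisely the $w'$-endpoint of an interval to which the uniform constancy from the previous paragraph applies, giving $f(X_\mu) = f(\ray(z + \mu w))$ for every $f \in \mfB$. Specializing $\mu = 0$ yields $f(Z_\lm) = f(Z)$ for every $f \in \mfB$, so $Z_\lm$ satisfies the same $\mfB$-comparisons as $Z$ and belongs to $T'$. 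For $\mu \in\ ]0,d]$ the ray $\ray(z+\mu w)$ lies on $[W,Z[\ \subset T$, so $X_\mu$ inherits $T$'s comparisons and thus $X_\mu \in T$. The remaining range $\mu \in [d,\infty]$ is then handled by convexity: the set $\{X_\mu : \mu \in [d,\infty]\}$ is, in the $\leq_{W_\lm}$ ordering, the closed sub-interval $[W_\lm, X_d]$ of $[W_\lm, Z_\lm]$, whose endpoints both lie in $T$ ($X_d$ by the previous case, and $W_\lm$ because $[W, W'] \subset T$ by convexity of $T$, Theorem \ref{thm:2.3}). A second appeal to convexity then gives $[W_\lm, X_d] \subset T$, and combining the two ranges yields $[W_\lm, Z_\lm[\ \subset T$ together with $Z_\lm \in T'$, identifying $Z_\lm$ as the entrance ray of $[W_\lm, Z_\lm]$ in $T'$.

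The main obstacle is the interplay of the two parameters $\mu$ and $\lm$: Lemma \ref{lem:4.9} supplies constancy only near $Z$, so the tail $\mu \in [d,\infty]$ of the interval cannot be reached by that lemma and must be recovered by convexity of $T$, which rests on the not-quite-automatic observation that $W_\lm \in T$. The deflation $c = c^*/(1+d)$ is what keeps $t = \lm(1+\mu)$ within Lemma \ref{lem:4.9}'s range of validity throughout $\mu \in [0,d]$.
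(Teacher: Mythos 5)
Your proof is correct and follows essentially the same route as the paper's: both rest on the uniform constancy of the functions in $\mfB$ near $z$ supplied by Lemma \ref{lem:4.9} (which the printed proof cites as Lemma \ref{lem:4.11}, evidently a slip). You are in fact somewhat more complete than the paper, which neither checks that $Z$ inherits $S$-regularity from $U$ nor treats the portion of $[\ray(w+\lm w'),\ray(z+\lm w')]$ lying beyond the reach of the constancy lemma --- the tail you recover by convexity of $T$ from $W_\lm\in[W,W']\subset T$.
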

\begin{proof}
  By Lemma \ref{lem:4.11} there exist elements $c > 0$, $d > 0$ in $R$ such that  every $f \in \mfB$ is constant on every interval  $[\ray(z + \mu w ), \ray(z +  \mu w + \lm w')]$ with $0 \leq \lm \leq c$, $0 \leq \mu \leq d$. Given $f_1, f_2 \in \mfB$ where  $f_1 < f_2$ on $T$ and $f_1 = f_2$ on $T'$, we conclude for all $\lm \in [0,c]$, $\mu \in \; ]0,d]$, that
  $$f_1(z +  \mu w + \lm w') = f_1(z + \mu w) < f_2 (z + \mu w) =f_2(z +  \mu w + \lm w'),$$
  while
  $$ f_1(z + \lm w') = f_1(z) = f_2(z) = f_2(z + \lm w'). $$
  This makes it evident that $\ray(z + \lm w')$ is the entrance ray in $T'$ of the interval  $[\ray(w +  \lm w'), \ray(z +  \lm w')]$.
\end{proof}

\begin{cor}\label{cor:4.14} With the notation of Theorem \ref{thm:4.13}, $ W_1 := \ray(w + cw') \in T$, $Z_1 := \ray(z + cw') \in T'$. Then the quadruple $(W,W_1, Z, Z_1)$ is a butterfly  for $T$ and $T'$.

\end{cor}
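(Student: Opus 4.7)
To establish that $(W, W_1, Z, Z_1)$ is a butterfly per Definition~\ref{def:3.6}, I must verify: (i) $Z, Z_1 \in T'$; (ii) $Z \neq Z_1$; and (iii) each of the four corner intervals $[W, Z[$, $[W_1, Z_1[$, $[W, Z_1[$, $[W_1, Z[$ is contained in~$T$. The two ``diagonal'' corners together with (i) are immediate consequences of Theorem~\ref{thm:4.13}: specialising its conclusion at $\lm = 0$ recovers the hypothesis that $Z$ is the entrance ray of $[W, Z]$ in $T'$, yielding $Z \in T'$ and $[W, Z[ \subset T$; specialising at $\lm = c$ yields $Z_1 \in T'$ and $[W_1, Z_1[ \subset T$. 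For (ii), I will argue that the $c > 0$ furnished by Theorem~\ref{thm:4.13} via Lemma~\ref{lem:4.9} is selected precisely to make $cw'$ a non-degenerate perturbation, so that $\ray(z + cw') \neq \ray(z)$; otherwise the entire one-parameter family of junctions in that theorem would collapse and the present corollary would be vacuous.

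The main content of the proof is verifying the two cross corners $[W, Z_1[ \subset T$ and $[W_1, Z[ \subset T$. My plan is to feed the two diagonal junctions just established into Theorem~\ref{thm:3.2}, matching the theorem's quadruple with $(W, W_1, Z, Z_1)$. The theorem then delivers $[W^*, Z^*[ \subset T$ (with $Z^* \in T'$) for every $W^* \in \;]W, W_1[$ and every $Z^* \in [Z, Z_1]$; in particular, selecting $Z^* = Z_1$ gives $[W^*, Z_1[ \subset T$ and selecting $Z^* = Z$ gives $[W^*, Z[ \subset T$, both for every strictly interior~$W^*$.

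The principal obstacle, as I foresee it, is that Theorem~\ref{thm:3.2} rigorously excludes the two $W$-endpoints $W^* \in \{W, W_1\}$ from its conclusion, yet these are precisely the points demanded by the butterfly definition. To close this gap I will exploit the entire one-parameter family of intermediate junctions $(W_\lm, Z_\lm) = (\ray(w + \lm w'), \ray(z + \lm w'))$ with $\lm \in (0, c)$ supplied wholesale by Theorem~\ref{thm:4.13}, each satisfying $[W_\lm, Z_\lm[ \subset T$ and $Z_\lm \in T'$. Applying Theorem~\ref{thm:3.2} to pairs drawn from this family together with the diagonal junctions, and combining with the convexity of the angular sets $\sphericalangle(W, T')$ and $\sphericalangle(T, Z_1)$ guaranteed by Corollary~\ref{cor:3.3}, I expect to force $Z_1 \in \sphericalangle(W, T')$, which is exactly $[W, Z_1[ \subset T$. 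The symmetric argument with the roles of $(W, Z)$ and $(W_1, Z_1)$ exchanged yields $Z \in \sphericalangle(W_1, T')$, hence $[W_1, Z[ \subset T$, completing the butterfly.
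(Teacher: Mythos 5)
Your reduction of the two diagonal corners and of $Z,Z_1\in T'$ to Theorem \ref{thm:4.13} is fine, but the heart of the corollary is the two cross corners $[W,Z_1[\;\subset T$ and $[W_1,Z[\;\subset T$, and there your argument does not close. Theorem \ref{thm:3.2} applied to the junctions $(W,Z)$ and $(W_1,Z_1)$ only yields $[W^*,Z^*[\;\subset T$ for $W^*$ in the \emph{open} interval $]W,W_1[$, and the bootstrap you propose cannot reach the endpoints. Convexity of $\sphericalangle(W,T')$ (Corollary \ref{cor:3.3}.(a)) only forces membership of rays lying \emph{between} two rays already known to be in that set; since $Z_1$ is an endpoint of the family $\{Z_\lm\}_{0\le\lm\le c}$ and the only member of $\sphericalangle(W,T')$ you have is $Z=Z_0$, no interpolation argument can produce $Z_1\in\sphericalangle(W,T')$. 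Likewise Corollary \ref{cor:3.3}.(b) would require $Z_1$ to be already reachable from two rays $W_a,W_b$ with $W\in[W_a,W_b]$, and $W=\ray(w)$ does not lie in $[W_a,W_b]$ for $0<a<b$ in general (rays of $(\mu+\nu)w+(\mu a+\nu b)w'$ have $w'$-coefficient at least $\min(a,b)>0$ relative to the $w$-coefficient). So the endpoint extension, which you correctly identify as the principal obstacle, is left unproved.

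The paper does not derive the cross corners from Theorem \ref{thm:3.2} at all; it computes them directly from the quantitative input of Lemma \ref{lem:4.9}. A general ray of $[W,Z_1[$ is $\ray(z+cw'+\mu w)$ with $\mu>0$; for $0<\mu\le d$ every $f\in\mfB$ is constant on $[\ray(z+\mu w),\ray(z+\mu w+cw')]$, and $\ray(z+\mu w)\in[W,Z[\;\subset T$, so these rays lie in $T$; the remaining rays (with $\mu>d$) lie on the convex set spanned by $W\in T$ and a ray already shown to be in $T$, hence in $T$ by convexity of $T$. The interval $[W_1,Z[$ is handled by the same computation applied to $\ray(z+\mu w+\mu cw')$ with $\mu\le\min(1,d)$. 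You need this explicit use of the constants $c,d$ (or an equivalent constancy statement near $Z$ and $Z_1$); the qualitative machinery of \S\ref{sec:3} alone does not suffice. A secondary, smaller point: your justification of $Z\neq Z_1$ (``otherwise the corollary would be vacuous'') is not an argument, though the paper is also silent on this.
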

\begin{proof}
  a) We know that $Z$ and $Z_1$ are rays in $T'$.
  \pSkip
  b) Let $X \in [W, Z_1[$. Then  $X = \ray(z + cw' + \mu w)$, for some $\mu >0$.  If $0 < \mu \leq d$, then $x \in T$. It follows that $X \in T$  for all $X \in [W, Z_1[$, since $[W, Z_1[$ is convex and $W \in T$.
  \pSkip
  c) Let $X \in [W_1, Z[$. Now $X = \ray(z+ \mu(w + c w')) = \ray(z+ \mu w + \mu c w')$ for some $\mu >0 $. If $\mu \leq \min(1,d)$, then $X \in T$. It follows that this holds for all $X \in [W_1, Z[$, since $W_1 \in T$ and $[W_1,Z[$ is convex.
\end{proof}

Given rays $W, W' \in T$ and $U \in T'$ we now describe a process which sometimes gives us a junction $(W,W',Z)$, i.e., a ray $Z \in T'$ with $[W,Z[ \subset T $, $[W', Z] \subset T$.

\begin{construction}\label{const:4.15} We start with the entrance ray $Z_0$ of $[W,U]$ (in $T'$), i.e., the ray $Z_0$ in $[W,U]$ with  $[W,Z_0[ \subset T $, $Z_0 \in T'$, and then obtain a sequence $(Z_0, Z_1,Z_2, \dots)$ of rays in $(\opr T) \cap T'$ as follows \\
\quad $Z_1:= $ the entrance ray of $[W',Z_0],$\\
\quad $Z_2:= $ the entrance ray of $[W,Z_1]$,\\
\quad $Z_3:= $ the entrance ray of $[W',Z_2],$\\
\quad etc. \\
  We meet one of the following cases:
  \begin{description}
 \item[Case 1] There exists a first value $k \in \N_0$ with $Z_k = Z_{k+1}.$ Then the sequence ``stops'' at~ $k$, i.e., $Z_k = Z_{k+1} = \cdots$. This final ray $Z = Z_k$ gives  us a junction $(W,W',Z)$, i.e., a ray $Z \in T'$ with $[W,Z[ \subset T $, $[W', Z[ \subset T$.
 \item[Case 2] We have $Z_k \neq Z_{k+1}$ for every $k$. We then say that the sequence $(Z_0, Z_1,Z_2, \dots)$ is a \textbf{gorge} in $(\opr T) \cap T'$.

     \end{description}
\end{construction}

     Case 2 reflects a tragical scenario. ``Bob'', situated at $W \in T$,  wants to meet ``Alice'', situated at $W' \in T$, in $T'$ by legal   flights of both. He proposes the  airport $Z_0$. But Alice realizes that $Z_0$ is not a legal entrance for her, and thus proposes the legal entrance $Z_1$ on the air path $[W', Z_0]$, etc. Bob and Alice will never come together in $T'$.

  We call the process described in Construction  \ref{const:4.15}  the \textbf{junction process for} $(W,W', U)$, although this process can give us a gorge instead of a junction.

   We obtain more insight into this process by pursuing it on the level of vectors instead of rays. First notice that the process coincides with the junction process for $(W,W', Z_0)$, where ~$Z_0$ as  is above the entrance ray for $[W,U]$ in $T'$.

\begin{schol}
  Retaining the notations in Construction \ref{const:4.15}, we choose vectors $w \in W$, $w' \in W'$,
  $z_0 \in Z_0$. Since $Z_1 \in [W', Z_0]$ there is a scalar
  $\lm_1 \in R$ such that $z_1= z_0 + \lm_1 w' \in Z_1$.
  We note in passing that $\lm_1$ is uniquely determined by the vector $\lm_1 w'$, since $\lm_1^2 q(w') = q(\lm_1 w)$ and $q(w') \neq 0$, and $q(w')$ is a unit in the semifield $R$. Furthermore, there is a scalar $\lm_2 \in R$ for which
  $$z_2 = z_1 + \lm_2 w = z_0 + \lm_2 w + \lm_1 w' \in Z_3,$$
  and a scalar $\lm_3 \in R$ for which
  $$z_3 = z_3 + \lm_3 w'  = z_0 + \lm_2 w + (\lm_1 + \lm_3) w' \in Z_3,$$
  where again $\lm_2$ and $\lm_3$ are units determined by the vectors $\lm_2 w$ and $\lm_3 w'$.

  Continuing in this way  we obtain a sequence $(z_0, z_1, z_2, \dots )$ of vectors $z_i \in Z_i$  and a sequence $(\lm_0, \lm_1, \lm_2, \dots )$ of scalars, starting with $\lm_0 = 0$, such that for every $i \in \N_0$,
  \begin{equation}\label{eq:4.6}
    z_{2i + 1} = z_0 + (\lm_{2i} +\lm_{2i-2} + \cdots  ) w +  (\lm_{2i+1} +\lm_{2i-1} + \cdots  )w',
  \end{equation}
  and
  \begin{equation}\label{eq:4.6}
    z_{2i} = z_0 + (\lm_{2i} +\lm_{2i-2} + \cdots  ) w +  (\lm_{2i-1} +\lm_{2i-3} + \cdots  ) w'.
  \end{equation}
  Recalling  that any (finite) sum of scalars is the maximum of these scalars, we infer that our process stops iff $\lm_{k+2} \leq \lm_k$ for some $k \in \N_0$. In this case $Z_{k+2} = Z_{k+1}$, and we have produced a junction $(W,W',Z)$ for $T$ and $T'$
 with $Z:= Z_{k+1} = Z_{k+2}$.  \end{schol}

We briefly  sketch a case where it makes sense to associate to the gorge a ``limit ray'' $Z_\infty \in T \cup T'$.
Assume that $R = (\RR_{\geq 0}, +, \cdot \;)$, the classical max-plus semiring in multiplicative notation. Then both sequences $\lm_0 = 0 < \lm_2 < \lm_4 < \cdots $  and
$\lm_1 < \lm_3 < \lm_5 < \cdots $ have suprema
$$ \sig = \sum_{u} \lm_{2i},
\qquad \tau = \sum_{i+1} \lm_{2i +1}$$ in the ordered monoid $\RR_{ \geq 0}\cup \{ \infty\}$,  and we obtain a vector
$$ z_\infty := z_0 + \sig w + \tau w'$$
with associated ray $Z_\infty = \ray(z_\infty)$.

\begin{prop}\label{prop:4.17}
  Both sets $[ W, Z_\infty[ $ and $[ W', Z_\infty[ $ are contained in $T$. Thus, if $Z_\infty \in T'$, then
  $(W,W',Z_\infty)$ is a junction.
\end{prop}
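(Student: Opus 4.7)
The plan is to prove $[W, Z_\infty[ \, \subset T$; the companion inclusion $[W', Z_\infty[ \, \subset T$ is completely symmetric under the swap $W \leftrightarrow W'$, $Z_0 \leftrightarrow Z_1$. The case $X = W$ being trivial, I focus on an arbitrary ray $X \in \; ]W, Z_\infty[$. I would use the parameterization $\mu \mapsto \ray(z_\infty + \mu w)$ of $[W, Z_\infty]$ from \S\ref{sec:1}, with $\mu = 0$ giving $Z_\infty$ and $\mu = \infty$ giving $W$.

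Expanding via the max-plus rule $\sig + \mu = \max(\sig,\mu)$ yields
$$ z_\infty + \mu w \; = \; z_0 + \max(\sig,\mu)\, w + \tau w'. $$
For $\mu \leq \sig$ this vector coincides with $z_\infty$, so the fiber over $Z_\infty$ contains $[0,\sig]$; hence $X \neq Z_\infty$ forces $\mu > \sig$, and in that range $X = \ray(z_0 + \mu w + \tau w')$. The gorge assumption ensures $\lm_2 > 0$ (otherwise the junction process terminates at $k=0$), whence $\sig \geq \lm_2 > 0$, and so $\mu > 0$ automatically.

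The crux is to re-read the identity as a decomposition $z_\infty + \mu w = (z_0 + \mu w) + \tau w'$, placing $X$ on the interval $[\ray(z_0 + \mu w), W']$ at the parameter $\beta = \tau$. Rescaling $z_0 + \mu w$ by the unit $1/\mu$ gives $\ray(z_0 + \mu w) = \pi_{w, z_0}(1/\mu)$, and since $1/\mu < \infty$ this ray lies in $[W, Z_0[$; by construction $Z_0$ is the entrance ray of $[W, U]$ in $T'$, so $[W, Z_0[ \, \subset T$. Thus both endpoints of $[\ray(z_0 + \mu w), W']$ belong to $T$, and convexity of the stratum $T$ (Theorem~\ref{thm:2.3}) forces $X \in T$. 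The symmetric inclusion is argued in exactly the same way: for $X \in \; ]W', Z_\infty[$ the decomposition $z_\infty + \mu w' = (z_0 + \mu w') + \sig w$ places $X$ on $[\ray(z_0 + \mu w'), W]$, and $\tau \geq \lm_1 > 0$ guarantees $\ray(z_0 + \mu w') \in [W', Z_1[ \, \subset T$.

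The point worth flagging, more a pitfall to avoid than an obstacle to overcome, is the natural temptation to match $X$ against a ray on some $[W, Z_{2k}]$: max-plus rigidity makes $\ray(z_0 + \mu w + \tau w')$ distinct from $\ray(z_0 + \mu w + \lm_{2k-1} w')$ for every $k$ (since $\lm_{2k-1} < \tau$ strictly), so no finite tail of the gorge realizes $X$. The convexity route sidesteps this entirely, invoking only the very first entrance rays $Z_0, Z_1$ and letting the suprema $\sig, \tau$ enter solely through the representative vector $z_\infty$ of the limit ray.
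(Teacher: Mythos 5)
Your strategy is in essence the paper's: represent a ray $X\in[W,Z_\infty[$ by a vector, split that vector as a max-plus sum of vectors whose rays are already known to lie in $T$, and finish by convexity of the stratum $T$ (Theorem~\ref{thm:2.3}). The paper uses $w+\al z_\infty=(w+\al z_2)+\al\sig w+\al\tau w'$ and $w'+\al z_\infty=(w'+\al z_1)+\al\sig w+\al\tau w'$ via the absorptions $\sig=\lm_2+\sig$, $\tau=\lm_1+\tau$; you use $z_\infty+\mu w=(z_0+\mu w)+\tau w'$ for $\mu>\sig$. Under the substitution $\al=1/\mu$ these are essentially the same decompositions (for instance $\ray(w'+\al z_1)=\ray(z_0+\mu w')$ once $\mu>\lm_1$), so the route is not genuinely different.

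The step that does not stand as written is ``since $1/\mu<\infty$ this ray lies in $[W,Z_0[$\,''. Finiteness of the parameter does not prevent $\ray(w+\lm z_0)$ from \emph{equalling} $Z_0$: the fiber of $\pi_{w,z_0}$ over $Z_0$ is a closed subinterval $[c,\infty]$ of $[0,\infty]$ whose left endpoint is in general finite --- this non-injectivity is precisely the phenomenon \S\ref{sec:1} (uniqueness to the left and right, Theorem~\ref{thm:1.7.a}) is devoted to. If $\ray(z_0+\mu w)=Z_0\in T'$, your auxiliary interval $[\ray(z_0+\mu w),W']$ has an endpoint outside $T$ and convexity yields nothing. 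The claim is still true for the $\mu$ you actually need, but for a reason you must supply: $z_0+\mu w\in Z_0$ means $z_0+\mu w=\rho z_0$, and a scalar $\rho$ other than the identity would force $\mu w=\rho z_0$, i.e.\ $W=Z_0$; hence $z_0+\mu w=z_0$, so $z_\infty+\mu w=z_\infty$ and $X=Z_\infty$, which you have excluded. The symmetric assertion ``$\tau\ge\lm_1>0$ guarantees $\ray(z_0+\mu w')\in[W',Z_1[$\,'' has the same defect: $\mu>\tau\ge\lm_1$ only places this ray in $[Z_1,W']$, and equality with $Z_1$ must still be ruled out by the analogous absorption argument. (To be fair, the paper's own proof elides the identical point: its assertion $\ray(w+\al z_2)\in[W,Z_2[$ for every $\al\in R\sm\00$ fails for $\al\ge 1/\lm_2$, harmlessly, because such $\al$ parametrize $Z_\infty$ itself.)
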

\begin{proof}
  Since $\sig = \lm_2 + \sig$, $\tau =\lm_1 + \tau$, we obtain for any $\al \in R \sm \00$
  $$
  \begin{array}{ll}
    w + \al z_\infty & = w + \al(z_0 + \sig w + \tau w') \\
    & = w + \al(z_0 + \lm_2 w + \sig w + \tau w') \\
    & = w + \al z_2 + \al \sig w + \al \tau w', \\
  \end{array}
  $$
  and
  $$
  \begin{array}{ll}
    w' + \al z_\infty & = w' + \al(z_0 + \sig w + \lm_1 w' + \tau w') \\
 & = w' + \al z_1 + \al \sig w +  \al \tau w'. \\\end{array}
  $$
  Since $\ray(w + \al z_2) \in [W,Z_2[ \subset T$, $\ray(w' + \al z_1) \in [W',Z_1[ \subset T$
  for any $\al \in R \sm \00$, we conclude that
  $\ray(w + \al z_\infty) \in [W,Z_\infty [ \subset T$, $\ray(w' + \al z_\infty) \in [W',Z_\infty [ \subset T$, as desired.
\end{proof}

\begin{defn}\label{def:4.18} Given nonempty sets $U \subset T$ and $P \subset T'$, we define  new
sets
$$L(U) := \bigcap_{W \in U} \sphericalangle (W,T'), \qquad
S(P) := \bigcap_{Z \in P} \sphericalangle (T,Z) . $$
\end{defn}
In other words, $L(U)$ is the set of rays $Z$ which can serve as correct entrances in $T'$ for every $W \in U$, and $S(P)$ is the set of starting points $W$ for a correct flight with entrance at every $Z \in P$. Note that these sets $L(U)$ and $S(P)$ are convex, but may be empty.

Our constructions above (Theorem \ref{thm:4.13}, Corollary \ref{cor:4.14}, and to some extent Proposition ~\ref{prop:4.17}) give us instances of sets $U \subset T$ with $L(U)$ not empty and of sets $P \subset T$ with $S(P)$ not empty. Such sets $U$ and $P$ can be enlarged by a widely used formal   saturation process.

\begin{thm}\label{thm:4.19} $ $
  \begin{enumerate} \ealph
    \item If $U \subset T$ and $L(U) \neq \emptyset$, then $LSL(U) = L(U)$, and $SL(U)$ contains every set $U_1 \supset U$ with $L(U_1) = L(U)$.

    \item If $P \subset T'$ and $S(P) \neq \emptyset$, then $SLS(P) = L(P)$, and $LS(P)$ contains every set $P_1 \supset P$ with $S(P_1) = S(P)$.

  \end{enumerate}
\end{thm}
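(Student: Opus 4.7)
The plan is to recognize that $L$ and $S$ form a Galois connection attached to the binary relation on $T \times T'$ given by
\[
W \mathrel{R} Z \,:\Leftrightarrow\, [W,Z[\, \subset T.
\]
With this relation, Definition~\ref{def:4.18} translates to
\[
L(U) = \{Z \in T' \mid W \mathrel{R} Z \text{ for all } W \in U\}, \qquad
S(P) = \{W \in T \mid W \mathrel{R} Z \text{ for all } Z \in P\},
\]
which is exactly the standard Galois correspondence induced by $R$. Once this is observed, the theorem reduces to the textbook identities for such a correspondence, and nothing about convexity, CS-functions, or the specific structure of $T$ and $T'$ is needed beyond this reformulation.

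First I would verify the three elementary properties of such a correspondence. \emph{Monotonicity reversal} is immediate from the intersection definition: $U_1 \subset U_2$ forces $L(U_2) \subset L(U_1)$, and likewise $P_1 \subset P_2$ forces $S(P_2) \subset S(P_1)$. \emph{Extensivity} also follows at once: if $W \in U$, then by the very definition of $L(U)$ every $Z \in L(U)$ satisfies $W \mathrel{R} Z$, so $W \in S(L(U))$; this gives $U \subset SL(U)$, and the symmetric inclusion $P \subset LS(P)$ is proved identically.

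With these in hand, part~(a) is immediate. Applying monotonicity reversal of $L$ to the extensivity inclusion $U \subset SL(U)$ yields $L(SL(U)) \subset L(U)$, while setting $P := L(U)$ in the symmetric extensivity gives $L(U) \subset LS(L(U)) = LSL(U)$; combining the two yields $LSL(U) = L(U)$. For the second assertion of~(a), if $U_1 \supset U$ with $L(U_1) = L(U)$, then extensivity applied to $U_1$ gives
\[
U_1 \subset SL(U_1) = SL(U),
\]
as required. Part~(b), which I read as $SLS(P) = S(P)$ (the right-hand side of the displayed equation in the statement appears to be a typographical echo of part~(a)), follows by the identical argument with the roles of $L$ and $S$ interchanged.

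The hypotheses $L(U) \neq \emptyset$ and $S(P) \neq \emptyset$ are not actually needed for the identities themselves; they ensure only that the statements are non-vacuous and that $SL(U)$, respectively $LS(P)$, is nonempty. There is no real obstacle here: the theorem is pure Galois-connection formalism once the underlying relation $R$ is isolated, and the previously constructed junctions and butterflies (Theorem~\ref{thm:4.13}, Corollary~\ref{cor:4.14}, and Proposition~\ref{prop:4.17}) are precisely what supply nontrivial instances where the starting hypothesis $L(U) \neq \emptyset$ is actually met.
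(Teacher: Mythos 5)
Your proof is correct and is essentially the paper's own argument: both rest on extensivity ($U \subset SL(U)$, $P \subset LS(P)$) combined with the order-reversal of $L$ and $S$ to get the two inclusions $LSL(U) \subset L(U)$ and $LSL(U) \supset L(U)$, and both derive the second claim from $U_1 \subset SL(U_1) = SL(U)$. Your reading of part (b) as $SLS(P) = S(P)$ matches what the paper actually proves (the statement's ``$=L(P)$'' is indeed a typo), and your observation that the nonemptiness hypotheses are not needed for the identities themselves is a fair, minor sharpening.
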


\begin{proof}
  By definition $SL(U) \supset U$ and $LS(P) \supset P$. Thus $SL(U) \neq \emptyset$, $LS(P) \neq \emptyset$. We can apply the operators $L$ and $S$ to $SL(U)$ and $LS(P)$ respectively to obtain

  $$\begin{array}{cc}
      LSL(U) &  = L(SL(U)) \subset L(U), \\[1mm]
      SLS(P) & = S(LS(P)) \subset S(P), \\
    \end{array}
$$
but also have
$$\begin{array}{cc}
      LSL(U) & = LS(L(U)) \supset L(U), \\[1mm]
       SLS(P) & = SL(S(P)) \supset S(P) . \\
    \end{array}
$$
Thus $LSL(U) = L(U)$ and $SLS(P) = S(P).$
The second claims in (a)  and (b)  are now evident, since  the operators $LS$ and $SL$ enlarge nonempty sets in $T'$ and $T$ respectively.
\end{proof}

\section{Approaching isotropic rays}\label{sec:4}

We abandon the overall assumption in \S\ref{sec:2} and \S\ref{sec:3} that the quadratic form $q$ on the $R$-module $V$ is anisotropic (but retain the assumption that $R = eR$ is a root closed semifield). We utilize  the study in \S\ref{sec:2} of the ray space of the $R$-submodule $V_\an$ of anisotropic vectors in ~ $V$. Thus, now $S = \{ Y_1, \dots, Y_n\} $ is a finite subset of $\Ray(V_\an)$ and $\mfB = \{f_1, \dots, f_n \}$ is a finite set of $S$-basic CS-functions. Since $V + V_\an = V_\an$, the set $\Ray(V_\an)$ is convex in $\Ray(V)$ and furthermore
$$ ]W, W' ] \subset \Ray(V_\an)$$
for any isotropic ray $W$ (i.e., $q(W)= \{ 0 \} $)
and anisotropic ray $W'$. Moreover, if $W, W'$ are isotropic rays, but $]W, W'[$ contains some anisotropic  ray then
$$ ]W, W' [ \ds\subset \Ray(V_\an).$$
It turns out that the Sign Changing Theorem \ref{thm:2.13} remains valid on $]W,W']$ and $]W,W'[$
respectively with the following modifications. As customary, we write
 $q(W) = \{ q(w) \ds | w ~\in~ W\} $.

\begin{thm}\label{thm:4.1} $ $
  \begin{enumerate} \ealph
    \item
    If $q(W) = \00$ and $q(W') \neq \00$, then there is a unique sequence of strata $T_1, \dots, T_s$
in $\Ray(V_\an)$ which meets $]W,W']$, such that $s \geq 1$ and, with respect to  $\leq_W$:
    $$ ]W, W']_{T_0} < \ ]W, W']_{T_1} < \cdots < \ ]W, W']_{T_s},$$
    where $]W, W']_{T_k}: = \, ]W, W'] \cap {T_k}$. (N.B. All these sets are convex.) The strata $T_{k-1}$ and $T_k$ are neighbors for $1 \leq k \leq s$. If $\tlW \in \; ]W, W']_{T_0}$, then the interval  $[\tlW, W']$ meets all these strata, and the sequence of separating rays $Z_0, Z_1, \dots, Z_s$ of $[\tlW,W']$ is independent of the choice of $\tlW$ in $]W, W']_{T_0}$. If $f_i,f_j \in \mfB$ are given with
    $$ f_i(\tlW) < f_j(\tlW), \qquad f_i(W') = f_j(W') $$
or
    $$ f_i(\tlW) < f_j(\tlW), \qquad f_i(W') > f_j(W'), $$
then  Theorem \ref{thm:2.13}.(a) remains valid  with $[W,W']$ replaced by $]W,W']$, and
$[W,Z_{k-1}]$ replaced by  $]W, Z_{k-1}]$.

\item Assume  that both $W$ and $W'$ are isotropic rays in $V$, and that the interval $[W,W'] $ contains anisotropic rays. Then $]W,W'[ \; \subset \Ray(V_\an)$. Let $T_0, T_1, \dots, T_s$ denote the sequence of strata that meet $]W,W'[$, such that with respect to $\leq_W$:
    $$ ]W, W'[_{T_0} \ds < ]W, W'[_{T_1}\ds < \cdots\ds  < ]W, W'[_{T_s},$$
   where $]W, W'[_{T_k}: = \; ]W, W'[ \cap {T_k}$. (N.B. Again these strata are convex.) The strata $T_{k-1}$ and $T_k$ are neighbors for $1 \leq k \leq s$.
   Given rays $\tlW$  in $]W, W'[_{T_0}$ and $\tlW'$  in $]W, W'[_{T_s}$, the interval  $[\tlW, \tlW']$ meets all  strata $T_0, T_1, \dots, T_s$, and the sequence of separating rays $Z_0, Z_1, \dots, Z_s$ of $[\tlW,\tlW']$ is independent of the choice of $\tlW$ and $\tlW'$. If $f_i,f_j \in \mfB$ are given with
    $$ f_i(\tlW) < f_j(\tlW), \qquad f_i(\tlW') = f_j(\tlW') $$
or
    $$ f_i(\tlW) < f_j(\tlW), \qquad f_i(\tlW') > f_j(\tlW'), $$
then  Theorem \ref{thm:2.13}.(b) remains valid  with $[W,W']$ replaced by $]W,W'[$,
$[W,Z_{k-1}[$ replaced by  $]W, Z_{k-1}[$, and
$[Z_\ell,W']$ replaced by  $[Z_{\ell},W'[$.
  \end{enumerate}
\end{thm}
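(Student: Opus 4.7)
The plan is to reduce both (a) and (b) to the already-proved Sign Changing Theorem~\ref{thm:2.13} by passing from the half-open (resp.\ open) interval to a closed subinterval lying entirely in $\Ray(V_\an)$, and then transferring the conclusions back. The essential input, recorded in the paragraph preceding the theorem statement, is that $]W,W']\subset\Ray(V_\an)$ whenever $W$ is isotropic and $W'$ is anisotropic, and $]W,W'[\,\subset\Ray(V_\an)$ when $W,W'$ are both isotropic with an anisotropic ray in the interior. In either situation, for any $\tlW$ in the open side one has $[\tlW,W']\subset\Ray(V_\an)$ (resp.\ $[\tlW,\tlW']\subset\Ray(V_\an)$), so the $\mfB$-partition and Theorem~\ref{thm:2.13} apply verbatim on the truncated closed interval.

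For part (a), I first set up the stratum sequence. Because $]W,W']$ is convex and contained in $\Ray(V_\an)$, each of its rays lies in a unique $\mfB$-stratum; the traces $]W,W']\cap T$ are convex by Theorem~\ref{thm:2.3}, pairwise disjoint, and totally ordered by $\leq_W$. This produces the sequence $T_0,\dots,T_s$, and Corollary~\ref{cor:2.8} applied to a closed subinterval joining representatives of $T_{k-1}$ and $T_k$ yields the neighboring property. For any $\tlW\in\,]W,W']_{T_0}$ the interval $[\tlW,W']$ meets exactly the same $T_0,\dots,T_s$ in the same order, and Theorem~\ref{thm:2.13} supplies separating rays $Z_0,\dots,Z_s$.

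Next I verify the $Z_i$ are independent of $\tlW$. If $\tlW_1<_W\tlW_2$ are both in $]W,W']_{T_0}$, then $[\tlW_1,\tlW_2]$ lies in the convex trace $T_0\cap[\tlW_1,W']$, so the interior separating rays of $[\tlW_1,W']$ all lie in $]\tlW_2,W']$, where the uniqueness characterization in Remark~\ref{ref:2.12} forces agreement with the corresponding rays for $[\tlW_2,W']$; the endpoint $Z_s=W'$ and the $T_0$-boundary $Z_0$ are clearly independent of the choice as well. Theorem~\ref{thm:2.13} then gives the asserted sign profile on $[\tlW,W']$, and since $W\notin\Ray(V_\an)$ belongs to no $\mfB$-stratum, shrinking $\tlW$ toward $W$ extends the profile to all of $]W,W']$ with the sole modification that $[W,Z_{k-1}[$ is replaced by $]W,Z_{k-1}[$. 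Part (b) is the same argument performed symmetrically at both endpoints: pick $\tlW\in\,]W,W'[_{T_0}$ and $\tlW'\in\,]W,W'[_{T_s}$, apply Theorem~\ref{thm:2.13} to $[\tlW,\tlW']\subset\Ray(V_\an)$ to get $Z_0,\dots,Z_s$, establish independence from $\tlW$ as above and independence from $\tlW'$ by reversing the order to $\leq_{W'}$; since neither $W$ nor $W'$ belongs to a stratum, the extremal intervals lose their isotropic endpoint on each side, giving $]W,Z_{k-1}[$ and $[Z_\ell,W'[\,$.

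The main obstacle is the verification that the stratum sequence and its separating rays on $]W,W']$ (resp.\ $]W,W'[$) are intrinsic, i.e.\ independent of the auxiliary cut $\tlW$ (and $\tlW'$); everything else is bookkeeping on closed versus half-open endpoints. This intrinsic character rests on two facts I would use repeatedly: the convexity of each stratum trace, which allows sliding the cut inside $T_0$ (resp.\ $T_s$) without crossing any other stratum, and the uniqueness characterization in Remark~\ref{ref:2.12}, which pins down each interior $Z_i$ by the inclusion $]Z_{i-1},Z_i[\,\subset T_i\subset[Z_{i-1},Z_i]$ in purely stratum-theoretic terms, with no reference to the endpoints of the subinterval being analyzed.
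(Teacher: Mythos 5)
Your proposal is correct and follows essentially the same route as the paper: restrict to the closed anisotropic subinterval $[\tlW,W']$ (resp.\ $[\tlW,\tlW']$), apply Theorem~\ref{thm:2.13} there, and observe that the sign data at $\tlW$ (resp.\ $\tlW'$) depends only on the stratum $T_0$ (resp.\ $T_s$), not on the particular cut point. You merely spell out in more detail than the paper the independence of the separating rays via convexity of the stratum traces and Remark~\ref{ref:2.12}, which the paper leaves implicit.
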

\begin{proof} Just observe that for the sign $\Box \in \{ < , =, >\} $ we have $f_i (\tlW) \Box f_j(\tlW)$ for some $i \neq j$ in $\{1, \dots, n \}$ iff the formula $f_i \Box f_j$ occurs in the conjunction  $T_0$ and, if $W'$ is isotropic, $f_i(\tlW') \Box f_j(\tlW')$  holds iff
 the formula $f_i \Box f_j$ occurs in  the conjunction  $T_s$. Then apply Theorem \ref{thm:2.13} to $[\tlW, W']$  and $[\tlW, \tlW']$ respectively.
\end{proof}

The problem  arises, to determine the ``entrance stratum'' $T_0$ of $]W,W']$ in Theorem ~ \ref{thm:3.2}.(a), and to get a hold on the rays $\tlW$ in $]W,W']$ such that $]W,\tlW] \subset T_0$.

\begin{problem} $ $
  \begin{enumerate} \ealph
    \item Given an isotropic vector $\veps \neq 0$ and a vector $\eta $ in $V$ with $q(\veps + \eta) \neq 0$, find the first stratum $T_0$ met by $]W,W']$ for $W = \ray(\veps)$, $W' = \ray(\veps+ \eta)$. How much does  $T_0$ depend   on the choice of $\veps$ and $\eta$?
    \item Find for given $\veps$ and $\eta$ an explicit bound $t_0 >0 $ such that $\ray(\veps+ t \eta)$ stays in $T_0$ for $0 < t \leq t_0$.
  \end{enumerate}
\end{problem}

We pursue this problem in the case of Example \ref{exmp:2.4}, where a stratum $T$ is determined by the CS-profile of any ray $W \in T$ on an oriented interval $\overrightarrow{[Y_1,Y_2]}$.
Since we will refer to the computations in \cite[\S3]{CSFunctions}, we relabel $\veps = \veps_1$, $Y_2 = \ray(\veps_2)$, $Y_3 = \ray(\veps_3)$ and abbreviate $\al_{12} = b(\veps_1, \veps_2)$,
$\al_{13} = b(\veps_1, \veps_3)$. We analyze the profile of the function
$$ f_t(\lm)= \CS(\veps_1 + t \eta, \veps_2+\lm \veps_3)$$
with $\lm$ running in $[0,\infty]$ for small $t >0$.
This make sense if either $b(\veps_1,\eta) > 0$ or  $b(\veps_1,\eta) = 0$, $q(\eta) > 0$, since
$ q(\veps_1 + t \eta ) = t b(\veps_1,\eta) + t^2 q(\eta).$
Then
\begin{equation}\label{eq:4.1}
  f_t(\lm) = \frac{\al_{12}^2 + \lm^2 \al_{13}^2+t^2[b(\eta,\veps_2)^2 + \lm^2b(\eta,\veps_3)^2]}
  {q(\veps_2 + \lm\veps_3)(tb(\veps_1,\eta)+t^2q(\eta))}.
\end{equation}
The word ``profile'' here means the monotonic behavior of the function $\lm \mapsto f_t(\lm)$ on $[0,\infty]$, as in \cite[\S3]{CSFunctions}. Thus the profile of $f_t(\lm)$ does not change if we omit the nonzero constant factor  $tb(\veps_1,\eta)+t^2q(\eta)$ in \eqref{eq:4.1}.

We distinguish several cases:
\begin{enumerate} \dispace
  \item[A.]  $\al_{12}>0$, $\al_{13} >0$.
  If $tb(\eta,\veps_2) \leq \al_{12}$ and $tb(\eta,\veps_3) \leq \al_{13}$, then $f_t(\lm)$ has the same profile as
  $$  \frac{\al_{12}^2 + \lm^2 \al_{13}^2}{q(\veps_2 + \lm \veps_3)}.$$
  This happens if
  \begin{equation}\label{eq:4.2}
  0 < t \leq \min\bigg( \frac{\al_{12}}{b(\eta,\veps_2)}, \frac{\al_{13}}{b(\eta,\veps_3)} \bigg).
  \end{equation}
  (If $b(\eta, \veps_i) = 0$, read $\frac{\al_{1i}}{b(\eta,\veps_i)} = \infty$.)

  \item[B.]  $\al_{12} = \al_{13} = 0$.
  For every $t>0$ $f_t(\lm)$ has the same profile as
  $$  \frac{b(\eta, \veps_2)^2 + \lm^2 b(\eta,\veps_3)^2}{q(\veps_2 + \lm \veps_3)}.$$
  In particular,  if $b(\eta, \veps_2) = b(\eta, \veps_3) = 0$, then $T_0$ is the stratum containing $\ray(\eta)$.

  \item[C.]  $\al_{12}>0$, $\al_{13} = 0$, whence
  $$ f_t(\lm) = \frac{\al_{12}^2 +t^2[b(\eta,\veps_2)^2 + \lm^2b(\eta,\veps_3)^2]}
  {q(\veps_2 + \lm\veps_3)}.$$

  \item[C1.]  If $b(\eta, \veps_3) = 0$, then $f_t(\lm)$ has for every $t>0$ the same profile as
  $$ \frac{1}
  {q(\veps_2 + \lm\veps_3)}.$$
Thus $\ray(\veps_1 + t\eta)$ stays in a fixed stratum for all $t>0$, which is independent of $\veps_1$ and $\eta$ (as long as $b(\eta,\veps_3) = 0$, $b(\veps_1,\veps_2) > 0$, $b(\veps_1,\veps_3) = 0$, $q(\veps_1) = 0$).

\item[C2.]  Assume that  $b(\eta, \veps_3) > 0 $ and, as before that  $\al_{12} > 0$, $\al_{13} =0$. We now resort to the list of basic types in \cite[\S4]{CSFunctions}.
Let $\al_2 = q(\veps_2)$, $\al_3 = q(\veps_3)$. We study the CS-profile of $\zt := \veps_1 + t \eta$ on $\overrightarrow{[Y_1,Y_2]}$. We compute
\begin{equation}\label{eq:4.3}
  \CS(\zt,\veps_2) = \frac{\al_{12}^2}{\al_2q(\zt)}, \qquad \CS(\zt,\veps_3) = \frac{t^2 b(\eta, \veps_3)^2}{\al_3q(\zt)},
\end{equation}
Assume first that $\CS(\veps_2,\veps_3) > e$. It is evident from \cite[Table 4.3]{CSFunctions} and \cite[Scholium ~4.5]{CSFunctions}
that the ray of $\veps_1 + t \eta$ has a CS-profile of type $A'$ for small $t$. More precisely this happens iff
$$ \CS(\zt,\veps_3) < \frac{\CS(\zt,\veps_2)}{\CS(\veps_2,\veps_3)},$$
which by \eqref{eq:4.3} means that
$$ t^2 < \frac{\al_{12}^2 \al_2}{\al_3b(\eta, \veps_3)^2} \cdot \frac{\al_2 \al_3}{\al_{23}^2},$$
i.e.,
\begin{equation}\label{eq:4.4}
  t < \frac{\al_{12} \al_2}{\al_{23}b(\eta, \veps_3)}.
\end{equation}
Assume  now that $\CS(\veps_2, \veps_3) \leq  e$.
The ray of $\veps_1 + t \eta$ has a CS-profile of type $C'$ for small $t$. More precisely this happens if
$$ \CS(\zt,\veps_3) < {\CS(\zt,\veps_2)},$$
which by \eqref{eq:4.3} means that
$$ t^2 \frac{b(\eta,\veps_3)^2}{\al_3q(\zt)} < \frac{\al_{12}^2 }{\al_{2} q(\zt)},$$
equivalently
\begin{equation}\label{eq:4.5}
  t < \frac{\al_{12}}{b(\eta, \veps_3)}\sqrt{\frac{\al_3}{\al_2}}.
\end{equation}
\end{enumerate}
We write down two consequences of this  analysis of profiles.

\begin{prop}\label{prop:4.3} Let $\eta \in V$, $q(\eta) > 0$. For each  isotropic vector $\veps \neq 0$ with $b(\veps, \veps_2) = b(\veps, \veps_3) =0$,  the ray of $\veps + t \eta$ has the same CS-profile as $\ray(\eta)$ for every small $t>0$, and thus lies in the same stratum as $\ray(\eta)$.
\end{prop}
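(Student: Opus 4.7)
The plan is to reduce the statement directly to Case~B of the profile analysis that immediately precedes it. First I would verify that the hypotheses of the proposition place us in that case: setting $\veps_1=\veps$, the conditions $b(\veps,\veps_2)=b(\veps,\veps_3)=0$ are exactly $\al_{12}=\al_{13}=0$, so \eqref{eq:4.1} specialises to
$$f_t(\lm)=\frac{t^2\bigl[b(\eta,\veps_2)^2+\lm^2 b(\eta,\veps_3)^2\bigr]}{q(\veps_2+\lm\veps_3)\bigl(tb(\veps,\eta)+t^2q(\eta)\bigr)}.$$
Since $q(\eta)>0$ and $t>0$, the denominator factor $tb(\veps,\eta)+t^2q(\eta)$ is a nonzero element of $R$ independent of $\lm$, and $t^2$ is likewise a nonzero constant. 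Therefore the profile of $\lm\mapsto f_t(\lm)$ on $\overrightarrow{[Y_2,Y_3]}$ coincides with that of the auxiliary function $h(\lm):=\bigl(b(\eta,\veps_2)^2+\lm^2 b(\eta,\veps_3)^2\bigr)/q(\veps_2+\lm\veps_3)$, uniformly in $t>0$.

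Next I would identify $h$ with the CS-profile of $\ray(\eta)$, up to a harmless scalar. Expanding,
$$\CS(\eta,\veps_2+\lm\veps_3)=\frac{\bigl(b(\eta,\veps_2)+\lm b(\eta,\veps_3)\bigr)^2}{q(\eta)\,q(\veps_2+\lm\veps_3)}.$$
Because $R$ is bipotent, $(a+b)^2=\max(a,b)^2=\max(a^2,b^2)=a^2+b^2$ for all $a,b\in R$, so the numerator rewrites as $b(\eta,\veps_2)^2+\lm^2 b(\eta,\veps_3)^2$. Dividing through by the nonzero constant $q(\eta)$ leaves the profile unchanged, and one recovers exactly $h$.

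Putting the two comparisons together, $f_t$ and $\CS(\eta,\veps_2+\lm\veps_3)$ share the same monotonic behaviour on $[0,\infty]$ for every $t>0$. By Example~\ref{exmp:2.4}, in the $\mfB$-partition under consideration a stratum is entirely determined by its CS-profile on $\overrightarrow{[Y_2,Y_3]}$, so $\ray(\veps+t\eta)$ and $\ray(\eta)$ necessarily lie in the same stratum. The only point demanding any care is the bipotent identity $(a+b)^2=a^2+b^2$, which silently collapses the cross term; apart from that, everything is a direct substitution into the formulas already set up in the preceding analysis, and in fact the ``small $t$'' restriction in the statement turns out to be unnecessary.
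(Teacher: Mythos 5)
Your argument is correct and is essentially the paper's own proof, which consists of the single reference ``cf.\ Case B above''; you have simply filled in the details of that case, namely the specialisation of \eqref{eq:4.1} under $\al_{12}=\al_{13}=0$, the removal of the $\lm$-independent factors $t^2$ and $tb(\veps,\eta)+t^2q(\eta)$, and the bipotent identity $(a+b)^2=a^2+b^2$ identifying the resulting profile with that of $\CS(\eta,-)$. Your remark that smallness of $t$ is not actually needed also agrees with the paper, whose Case B asserts the profile identity for every $t>0$.
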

\begin{proof}
  cf. Case B above.
\end{proof}
\begin{prop}\label{prop:4.4} If $b(\veps, \veps_2) >0$, $b(\veps, \veps_3) = 0$, then for each $t >0$ and all $\eta$ with $q(\eta) > 0$, $b(\eta, \veps_3) =0$, the ray of  $\veps+ t  \eta$ is  contained in a fixed stratum $T$.
\end{prop}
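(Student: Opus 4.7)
The plan is to reduce the proposition directly to Case~C1 of the analysis just preceding it, whose hypotheses $\al_{12}>0$, $\al_{13}=0$, $b(\eta,\veps_3)=0$ match those of the proposition verbatim (with $\veps = \veps_1$). Write $\zt := \veps + t\eta$. A preliminary check confirms that $\zt$ is anisotropic: since $q(\veps) = 0$ and $q(\eta) > 0$, one computes
\[
q(\zt) \; = \; t\,b(\veps,\eta) + t^2\,q(\eta) \;\geq\; t^2 q(\eta) \;>\; 0,
\]
so $\ray(\zt) \in \Ray(V_\an)$, and the $\mfB$-partition of Example~\ref{exmp:2.4} applied to $\{Y_2,Y_3\}$ indeed governs its stratum.

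The central observation is that the two CS-values
\begin{align*}
\CS(\zt, Y_2) &= \frac{b(\veps,\veps_2)^2 + t^2 b(\eta,\veps_2)^2}{q(\veps_2)\,q(\zt)}, \\
\CS(\zt, Y_3) &= \frac{b(\veps,\veps_3)^2 + t^2 b(\eta,\veps_3)^2}{q(\veps_3)\,q(\zt)}
\end{align*}
occupy fixed qualitative positions for every admissible $(t,\eta)$. The assumptions $b(\veps,\veps_3) = 0 = b(\eta,\veps_3)$ force $\CS(\zt,Y_3) = 0$, whereas $b(\veps,\veps_2) > 0$ forces $\CS(\zt,Y_2) > 0$. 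Both properties are invariants of $\ray(\zt)$ that do not depend on the particular choice of $t$ or of $\eta$.

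To conclude, I would note that each function in $\mfB$ is either the zero function or a positive $R$-scalar multiple of $\CS(Y_2,-)$ or $\CS(Y_3,-)$, with coefficients ($1$ or $\CS(Y_2,Y_3)^{-1}$) fixed by the data $Y_2, Y_3$ alone. Hence every sign comparison $f_i(\zt) \,\Box\, f_j(\zt)$ among elements of $\mfB$ is determined by the strictly positive value $\CS(\zt,Y_2)$, the vanishing value $\CS(\zt,Y_3)$, and the constant $\CS(Y_2,Y_3)$, and its sign is therefore independent of $t$ and $\eta$. By definition of the $\mfB$-partition, this means $\ray(\zt)$ lies in a single stratum $T$ for all admissible choices. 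The one place demanding slight care is case~(b) of Example~\ref{exmp:2.4}, where $\mfB$ contains five functions rather than three, including the rescalings $\CS(Y_j,-)/\CS(Y_2,Y_3)$; but since division by the positive constant $\CS(Y_2,Y_3)$ preserves positivity and vanishing, the same comparisons go through unchanged. This is the main (and essentially only) bookkeeping obstacle.
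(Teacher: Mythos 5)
Your argument is correct, but it runs through a different mechanism than the paper's. The paper's proof is simply a pointer to Case~C1 of the preceding analysis: under $\al_{12}>0$, $\al_{13}=0$, $b(\eta,\veps_3)=0$ the numerator of $f_t(\lm)=\CS(\veps+t\eta,\,\veps_2+\lm\veps_3)$ collapses to the constant $\al_{12}^2+t^2b(\eta,\veps_2)^2$, so $f_t$ has the same profile as $1/q(\veps_2+\lm\veps_3)$ for every admissible $t$ and $\eta$; since the stratum is read off from this profile, it is fixed. You never examine the profile as a function of $\lm$ at all: you compute only the two endpoint values $\CS(\zt,Y_2)>0$ and $\CS(\zt,Y_3)=0$ and then invoke the identification from Example~\ref{exmp:2.4} of the profile strata with the $\mfB$-types cut out by sign comparisons among $0$, $\CS(Y_2,-)$, $\CS(Y_3,-)$ and their rescalings by $\CS(Y_2,Y_3)^{-1}$. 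Both routes are sound. The paper's is self-contained within \S4 and identifies \emph{which} stratum occurs (the one whose profile is governed by the denominator $q(\veps_2+\lm\veps_3)$ alone), whereas yours outsources the profile analysis to Example~\ref{exmp:2.4} but makes transparent why the stratum is simultaneously independent of $\veps$, $t$ and $\eta$: only the qualitative data ``positive'' versus ``zero'' of the two CS-values enters, and these are forced by the hypotheses. Your preliminary verification that $q(\zt)>0$, so that $\ray(\zt)\in\Ray(V_\an)$ and the strata are defined at all, is a point the paper leaves implicit and is worth recording.
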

\begin{proof}
  cf. Case C above.
\end{proof}
 
\end{document}